\newcommand{\Huno}{{\calH}^{1}}
\newcommand{\Rdue}{\R^2}
\newcommand{\uabn}{u_{x_0}}
\newcommand\res{\mathop{\hbox{\vrule height 7pt width .5pt depth 0pt
\vrule height .5pt width 6pt depth 0pt}}\nolimits}
\newcommand{\Ln}{{\calL}^2}
\newcommand\eps{\varepsilon}
\newcommand\diam{\mathrm{diam\,}}
\newcommand\Id{\mathrm{Id}}
\newcommand\conv{\mathrm{conv\,}}
\newcommand\dist{\mathrm{dist}}
\newcommand\R{\mathbb{R}}
\newcommand\N{\mathbb{N}}
\newcommand\calH{\mathcal{H}}
\newcommand\calF{\mathcal{F}}
\newcommand\calA{\mathcal{A}}
\newcommand\calB{\mathcal{B}}
\newcommand\calL{\mathcal{L}}
\newcommand{\LM}[1]{\hbox{\vrule width.2pt \vbox to#1pt{\vfill \hrule width#1pt height.2pt}}}
\newcommand{\LL}{{\mathchoice{\,\LM7\,}{\,\LM7\,}{\,\LM5\,}{\,\LM{3.35}\,}}}
\newcommand{\ba}[1]{\begin{eqnarray} #1 \end{eqnarray}}
\newcommand{\be}[1]{\begin{equation} #1 \end{equation}}
\newcommand{\bm}[1]{\begin{multline} #1 \end{multline}}
\newcommand{\bes}[1]{\begin{eqnarray*} #1 \end{eqnarray*}}
\newtheorem{theorem}{Theorem}[section]
\newtheorem{proposition}[theorem]{Proposition}
\newtheorem{lemma}[theorem]{Lemma}
\newtheorem{remark}[theorem]{Remark}
\definecolor{green}{rgb}{0,.5,0}
\renewcommand\theenumi{{(\roman{enumi})}}
\renewcommand\labelenumi{\theenumi}
\numberwithin{equation}{section}
\newcounter{Nummer}
\begin{document}
\begin{center}
  {\Large
Integral representation for functionals\\[1mm]
defined on $SBD^p$
in dimension two}\\[5mm]
{\today}\\[5mm]
Sergio Conti$^{1}$, Matteo Focardi$^{2}$, and Flaviana Iurlano$^{1}$\\[2mm]
{\em $^{1}$
 Institut f\"ur Angewandte Mathematik,
Universit\"at Bonn\\ 53115 Bonn, Germany}\\[1mm]
{\em $^{2}$ DiMaI, Universit\`a di Firenze\\ 50134 Firenze, Italy}\\[3mm]
    \begin{minipage}[c]{0.8\textwidth}
    We prove an integral representation result for functionals with growth
    conditions which give coercivity on the space $SBD^p(\Omega)$, for $\Omega\subset\R^2$.
    The space $SBD^p$ of functions
    whose distributional strain is the sum of an $L^p$ part and a bounded measure
    supported on a set of finite $\calH^{1}$-dimensional measure appears naturally in the
    study of fracture and damage models. Our result is based on the construction of
    a local approximation by $W^{1,p}$ functions. We also obtain a generalization
    of Korn's inequality in the $SBD^p$ setting.    
    \end{minipage}
\end{center}


\def\calM{\mathcal{M}}
\section{Introduction}

The direct methods of $\Gamma$-convergence are of paramount importance in studying  
variational limits and relaxation problems since their introduction in the seminal paper by 
Dal Maso and Modica \cite{DalMasoModica80}. They focus on the study of 
abstract limiting functionals $F(u,A)$, obtained for instance using
$\overline\Gamma$-convergence arguments; one key ingredient is the proof
of an integral representation for $F(u,A)$.
Here $u:\Omega\to\R^N$ is an element of a suitable function 
space $\mathscr{X}(\Omega)$, and $A$ runs in the class $\mathcal{A}(\Omega)$ of open subsets of a given open set 
$\Omega\subset\R^n$.  
The notion of \emph{variational functional} is at the heart of the matter: $F$, regarded as depending 
on the couple $(u,A)\in \mathscr{X}(\Omega)\times\mathcal{A}(\Omega)$, has to satisfy suitable lower semicontinuity, 
locality and measure theoretic properties (for more details see properties (i)-(iii) in Theorem~\ref{theorepr}).
The specific growth conditions of the functional determine 
the natural functional space in which the function $u$ lies.
Under these assumptions $F(u,A)$ can be written as an integral over the domain of integration $A$
with respect to a suitable measure.
The integrands may depend on $x$, $u(x)$ and $\nabla u(x)$, and possibly on other local quantities of $u$, 
such as higher order or distributional derivatives. 
Furthermore, as first shown in some cases in \cite{DalMasoModica86} and then generalized in \cite{bou-fon-masc},
the corresponding energy densities can be characterized in terms of cell formulas, i.e.~asymptotic Dirichlet 
problems on small cubes or balls involving $F$ itself, with boundary data depending on the local properties of $u$.

 
Integral representation results have been obtained in several contexts with increasing generality: 
starting with the pioneering contribution by De Giorgi for limits of area-type integrals \cite{DeGiorgi75}, 
it has been extended to functionals defined first on Sobolev spaces in \cite{Sbordone1975,CarboneSbordone1979,ButtazzoDalmaso80,ButtazzoDalmaso1985b,ButtazzoDalmaso1985} 
and on the space of functions with Bounded Variation 
in \cite{DalMaso80,BouchitteDalMaso93}, and then to energies defined on partitions in \cite{AmbrosioBraides1990a} 
and on the subspace $SBV$ in \cite{BraidesChiadopiat1996} 
(we refer to \cite{ButtazzoDalmaso1985,dalmaso,bou-fon-masc,bou-fon-leo-masc} for a more exhaustive list of references).
The global method for relaxation introduced and developed
in \cite{bou-fon-masc,bou-fon-leo-masc} provides a general approach that  unifies and extends 
the quoted results.

We address the integral representation of functionals defined 
on the subspace $SBD^p(\Omega)$ of the space $BD(\Omega)$ in two dimensions. 
The space of functions of bounded deformation  $BD(\Omega)$ is 
characterized by the fact that the symmetric part of the distributional gradient 
$Eu:=(Du+Du^T)/2$ of $u\in L^1(\Omega,\R^n)$ is a bounded Radon measure, namely
\begin{equation*}
 BD(\Omega):=\{u\in  L^1(\Omega;\R^n): Eu \in \calM(\Omega;\R^{n\times n}_\mathrm{sym})\},
\end{equation*}
where $\Omega\subseteq\R^n$ is an open set, see \cite{Suquet1978a,Temam1983,AmbrosioCosciaDalmaso1997}.
$BD$ and its subspaces $SBD$ and $SBD^p$ constitute the natural setting
for the study of plasticity,
damage and fracture models in a geometrically linear framework
\cite{Suquet1978a,Temam1983,TemamStrang1980,AnzellottiGiaquinta1980,KohnTemam1983}.
In particular, $SBD^p$ is the set of $BD$ functions such that the strain $Eu$ can be written
as the sum of a function in  $L^p(\Omega,\R^{n\times n})$ and a part concentrated on a rectifiable set with 
finite $\calH^{n-1}$-measure, see 
\cite{BellettiniCosciaDalmaso1998,Chambolle2004a,Chambolle2004b,cha-gia-pon}.

For functionals with linear growth defined on $SBD$ an integral representation result was obtained by
Ebobisse and Toader \cite{EboToa03}. These functionals, however, lack coercivity on the relevant space.
The situation of functionals defined on $SBD^p$ and with corresponding growth properties is open. 
We give here a solution in two dimensions.  
\begin{theorem}\label{theorepr}
 Let $\Omega\subset\R^2$ be a bounded Lipschitz set, $p\in (1,\infty)$,
 $F:SBD^p(\Omega)\times \calB(\Omega)\to[0,\infty)$ be such that
 \begin{enumerate}
   \item\label{theoreprborel} $F(u,\cdot)$ is a Borel measure for any $u\in SBD^p(\Omega)$;
  \item\label{theoreprlsc} $F(\cdot, A)$ is lower semicontinuous with respect to the strong $L^1(\Omega,\R^2)$-convergence for any open set $A\subset\Omega$;
  \item\label{theoreprlocal} $F(\cdot, A)$ is local for any open set $A\subset\Omega$;
  \item\label{theoreprgrowth} There are $\alpha,\beta>0$ such that for any $u\in SBD^p(\Omega)$, any $B\in \calB(\Omega)$,
\begin{align}
   &\alpha (\int_B |e(u)|^pdx + \int_{J_u\cap B} (1+|[u]|) d\Huno) \le
    F(u,B)\notag\\
    \le &
  \beta (\int_B (|e(u)|^p+1)dx + \int_{J_u\cap B} (1+|[u]|) d\Huno).\label{e:growthF}
   \end{align}  
 \end{enumerate}
Then there are two Borel functions $f:\Omega\times \R^2\times \R^{2\times 2}\to [0,\infty)$ and
$g:\Omega\times\R^2\times \R^2\times S^1\to [0,\infty)$ such that
\begin{equation}\label{e:fg}
 F(u,B)=\int_B f(x,u(x),\nabla u(x)) dx + \int_{B\cap J_u} g(x,u^-(x), u^+(x), \nu_u(x)) d\Huno\,.
\end{equation}
\end{theorem}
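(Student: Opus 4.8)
The plan is to follow the global method for relaxation of Bouchitté–Fonseca–Mascarenhas–Leoni, adapting it to the $SBD^p$ setting in dimension two. The abstract scheme reduces the integral representation to two ingredients: (a) the existence of the bulk and surface densities as pointwise limits of suitably normalized minimization problems for $F$ on small balls, with affine-type boundary data; and (b) the identification that these densities depend on $u$ only through the local data $(u(x),\nabla u(x))$ in the bulk and $(u^-(x),u^+(x),\nu_u(x))$ on the jump set. Concretely, for $x_0\in\Omega$ and $\rho$ small one sets
\begin{equation*}
 \mathbf{m}(u,B_\rho(x_0)):=\inf\{F(v,B_\rho(x_0)) : v\in SBD^p(\Omega),\ v=u \text{ near } \partial B_\rho(x_0)\},
\end{equation*}
and defines $f(x_0,a,\xi)$ as $\limsup_{\rho\to0}\mathbf{m}(\ell_{a,\xi},B_\rho(x_0))/(\pi\rho^2)$ for the affine map $\ell_{a,\xi}(x)=a+\xi(x-x_0)$, and $g(x_0,a,b,\nu)$ as $\limsup_{\rho\to0}\mathbf{m}(u_{a,b,\nu,x_0},B_\rho(x_0))/(2\rho)$ for the elementary jump function $u_{a,b,\nu,x_0}$ equal to $a$ on one side of the hyperplane through $x_0$ orthogonal to $\nu$ and $b$ on the other. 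The growth bound \eqref{e:growthF} guarantees these quantities are finite and comparable to the model energies, so $f$ and $g$ are well-defined and Borel.

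The first main step is to show that for a fixed $u\in SBD^p(\Omega)$ the measure $F(u,\cdot)$ is absolutely continuous with respect to $\mu_u:=\Ln\res\Omega+\calH^1\res J_u$, with Radon–Nikodym derivative that can be computed separately on the two mutually singular pieces. This uses \eqref{e:growthF} to sandwich $F(u,\cdot)$ between multiples of $\mu_u$, hence $F(u,B)=\int_B \varphi\,d\Ln+\int_{B\cap J_u}\psi\,d\calH^1$ for Borel $\varphi,\psi$; the content is then to prove $\varphi(x_0)=f(x_0,u(x_0),\nabla u(x_0))$ for $\Ln$-a.e.\ $x_0$ and $\psi(x_0)=g(x_0,u^-(x_0),u^+(x_0),\nu_u(x_0))$ for $\calH^1$-a.e.\ $x_0\in J_u$. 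For the bulk part one uses Lebesgue points and the fact that a generic $SBD^p$ function is, at $\Ln$-a.e.\ point, approximately differentiable with vanishing density of the jump set, so near $x_0$ it is $L^1$-close to the affine map $\ell_{u(x_0),\nabla u(x_0)}$; combined with lower semicontinuity \eqref{theoreprlsc}, locality \eqref{theoreprlocal}, and the upper/lower bounds via $\mathbf m$, one deduces $\varphi(x_0)=f(x_0,u(x_0),\nabla u(x_0))$. For the surface part one blows up at a jump point $x_0\in J_u$ where $u$ has one-sided approximate limits $u^\pm(x_0)$ and normal $\nu_u(x_0)$: the rescaled functions converge in $L^1_{loc}$ to the elementary jump $u_{u^-(x_0),u^+(x_0),\nu_u(x_0),0}$, and the same semicontinuity-plus-comparison argument yields $\psi(x_0)=g(x_0,u^-(x_0),u^+(x_0),\nu_u(x_0))$.

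The key technical obstruction, and the reason the two-dimensional restriction enters, is the comparison argument that makes the blow-up rigorous: to estimate $F(u,B_\rho(x_0))$ from above by $\mathbf m(\ell,B_\rho(x_0))$ (and conversely) one must be able to \emph{modify a competitor near the boundary of the ball} so that it matches the given data without paying more than a lower-order energy cost. This is a gluing/fundamental-estimate step, and in the $SBD^p$ setting it requires a local approximation of $SBD^p$ functions by $W^{1,p}$ functions together with a Korn-type inequality controlling the full gradient by the symmetrized gradient plus the jump — exactly the two tools the introduction announces as being developed in the paper. I would invoke those results to obtain, on an annulus $B_\rho(x_0)\setminus B_{\rho(1-\delta)}(x_0)$, a $W^{1,p}$ (hence $SBD^p$) interpolation between $u$ and the target affine or elementary-jump datum whose strain energy and jump length on the annulus are $O(\delta)$ times the relevant scale; letting $\delta\to0$ after $\rho\to0$ closes the estimates. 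The remaining steps — verifying the Borel measurability of $f,g$ in all variables, checking the $S^1$-measurability of the normal dependence, and assembling the pointwise identifications into \eqref{e:fg} via a standard measure-theoretic covering argument — are routine once the fundamental estimate is in hand.
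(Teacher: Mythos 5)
Your proposal correctly identifies the overall strategy — the global method of Bouchitt\'e--Fonseca--Leoni--Mascarenhas, with densities $f,g$ defined as $\limsup$'s of normalized cell problems $m(\cdot,B_\rho(x_0))$, absolute continuity of $F(u,\cdot)$ with respect to $\calL^2 + (1+|[u]|)\calH^1\res J_u$ via the two-sided growth bound, and blow-up identification of the Radon--Nikodym derivatives. This is essentially the route the paper takes, and you are right that the bottleneck is the fundamental estimate, that the De Giorgi--Carriero--Leaci Poincar\'e inequality is unavailable in $SBD^p$, and that the $W^{1,p}$ approximation together with Korn-type control is the needed substitute.

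Where your sketch is too optimistic is the assertion that one can interpolate \emph{directly between $u$ and the affine (or elementary-jump) target} on an annulus at cost $O(\delta)$. With a cut-off $\varphi$ the gluing produces an error term $\|(u-\ell)\otimes\nabla\varphi\|_{L^p}$, which requires $L^p$ control of $u-\ell$ on the annulus; for a raw $SBD^p$ function one only has $L^1$ smallness (from approximate differentiability) plus $L^p$ control of $e(u)$, and this does not bound $\|u-\ell\|_{L^p}$ in the absence of a Poincar\'e--Korn inequality of De Giorgi--Carriero--Leaci type. The paper's actual mechanism is to interpose an intermediate object: it first constructs a $W^{1,p}$ regularization $w_\eps^s$ of $u$ near $x_0$ (via Theorem~\ref{t:tecnico} and Proposition~\ref{p:ricopr}), agreeing with $u$ near $\partial B_\eps$, proves $m(u,B_\eps)\approx m(w_\eps^s,B_{s\eps})$ by a truncation argument and measure additivity, and then glues $w_\eps^s$ with $\ell$ on the inner annulus, where a reverse-H\"older estimate (Lemma~\ref{lemmal1lp}, itself based on the $W^{1,p}$ Korn inequality) upgrades the $L^1$ smallness to the needed $L^p$ smallness. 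For the surface term an analogous three-step argument is needed, with the $W^{1,p}$ replacement upgraded to an $SBV^p$ one with a single smooth jump, obtained by an additional reflection construction (Lemma~\ref{lem:refl}). Also, before any of this one needs the structural facts $F(u,A)=m^*(u,A)$ and $\lim_\eps F(u,B_\eps)/\mu(B_\eps)=\lim_\eps m(u,B_\eps)/\mu(B_\eps)$ for $\mu$-a.e.\ $x_0$ (Lemmas~\ref{lem1}--\ref{lem2}), which your sketch compresses into "semicontinuity-plus-comparison". So: right framework and right ingredients, but the key step — how the $W^{1,p}$ approximation actually enters the comparison with the cell problem — is where the work is, and as written your direct $u$-to-$\ell$ gluing would break down.
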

Above and throughout the paper we will refer to the book \cite{ambrosio} and to the papers 
\cite{AmbrosioCosciaDalmaso1997, BellettiniCosciaDalmaso1998} for the notation and results about 
$BV$ and $BD$ spaces, respectively. In particular, $\calB(\Omega)$ is the family of Borel subsets of $\Omega$.

The proof of Theorem~\ref{theorepr}, which is given in Section \ref{s:intrep1}, 
follows the general strategy introduced in \cite{bou-fon-masc,bou-fon-leo-masc}.
Their approach was based on a Poincar\'e-type inequality in $SBV$ by 
De Giorgi, Carriero and Leaci, which is not known in $SBD^p$ (see \cite{DeGiorgiCarrieroLeaci,ambrosio}).
Our main new ingredient
is the construction of an approximation by $W^{1,p}$ functions,
discussed in Section \ref{sec:approximation}, which permits to bypass the De Giorgi-Carriero-Leaci inequality.
The approximation is done so that the function is only modified outside a countable set of balls 
with small area and perimeter.
In each ball, we give  a construction of a $W^{1,p}$ extension for the $SBD^p$ function
by constructing a finite-element approximation on a countable mesh, which is chosen 
depending on the function $u$, see Section \ref{sec:grid}.

Our $W^{1,p}$ approximation result also leads naturally to the proof of the following variant of Korn's inequality for
$SBD^p$ functions.
\begin{theorem}\label{theo:korn}
Let $\Omega\subset\R^2$ be a connected, bounded, Lipschitz set and let $p\in (1,\infty)$. Then there exists a constant
$c$, depending on $p$ and $\Omega$, with the following property: for every $u\in SBD^p(\Omega)$ there exist a set $\omega\subset\Omega$
of finite perimeter, with $\Huno(\partial \omega)\leq c\Huno(J_u)$, and an affine function $a(x)=Ax+b$, 
with $A\in\R^{2{\times}2}$ skew-symmetric and $b\in\R^2$, such that
\bes{
\|u-a\|_{L^p(\Omega\setminus \omega,\R^2)}\leq c \|e(u)\|_{L^p(\Omega,\R^{2{\times}2})},\\
\|\nabla u-A\|_{L^p(\Omega\setminus \omega,\R^{2{\times}2})}\leq c \|e(u)\|_{L^p(\Omega,\R^{2{\times}2})}.
}
\end{theorem}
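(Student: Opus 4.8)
The plan is to derive Theorem~\ref{theo:korn} directly from the $W^{1,p}$-approximation result that the paper has set up in Section~\ref{sec:approximation} (and on which Theorem~\ref{theorepr} is also based), combined with the classical Korn inequality for Sobolev functions on the connected Lipschitz domain $\Omega$. The rough idea: given $u\in SBD^p(\Omega)$, the approximation produces a set $\omega\subset\Omega$ of finite perimeter with $\Huno(\partial\omega)\leq c\,\Huno(J_u)$ (the countable union of small balls that the introduction refers to has controlled perimeter), together with a function $v\in W^{1,p}(\Omega,\R^2)$ such that $v=u$ on $\Omega\setminus\omega$ and, crucially, $\|e(v)\|_{L^p(\Omega)}\leq c\,\bigl(\|e(u)\|_{L^p(\Omega)}+\Huno(J_u)\bigr)$ — or, in the homogeneous version one actually wants here, $\|e(v)\|_{L^p(\Omega)}\leq c\,\|e(u)\|_{L^p(\Omega)}$ after the scaling argument described below. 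Then I would apply the standard Korn inequality on $\Omega$ to $v$: there is a skew-symmetric $A$ and $b\in\R^2$ with $\|v-a\|_{L^p(\Omega)}+\|\nabla v-A\|_{L^p(\Omega)}\leq c\,\|e(v)\|_{L^p(\Omega)}$, where $a(x)=Ax+b$. Since $u=v$ and $\nabla u=\nabla v$ pointwise a.e. on $\Omega\setminus\omega$, restricting these estimates to $\Omega\setminus\omega$ gives exactly the two inequalities in the statement.

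The one genuinely delicate point is the homogeneity, i.e.\ getting $\|e(u)\|_{L^p}$ alone on the right-hand side with no additive $\Huno(J_u)$ term, no $+1$, and no dependence on $\|u\|_{L^1}$. The approximation estimate, matching the growth condition \eqref{e:growthF}, will naturally carry a term controlled by $\Huno(J_u)$ and possibly the measure of $\omega$; a crude bound would give $\|e(v)\|_{L^p}\leq c(\|e(u)\|_{L^p}+\Huno(J_u))$. To remove the jump term I would use a \emph{two-regime / rescaling} argument. If $\Huno(J_u)$ is large compared to $\|e(u)\|_{L^p}^p$, one simply enlarges $\omega$ so that $\Omega\setminus\omega$ becomes empty (or so small that $\Huno(\partial\omega)\leq c\,\Huno(J_u)$ still holds because $\Huno(J_u)\gtrsim \Huno(\partial\Omega)$), and the inequalities hold vacuously with the convention that integrals over the empty set vanish; one must only check that $\Huno(\partial(\Omega\setminus\text{small piece}))$ stays within $c\,\Huno(J_u)$, which is fine precisely in the regime where $\Huno(J_u)$ dominates. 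If instead $\Huno(J_u)$ is small, one exploits scale invariance: replacing $u$ by $u/t$ with $t=\|e(u)\|_{L^p}/\Huno(J_u)$ (or a similar normalization) rescales $e(u)$ and $[u]$ homogeneously but leaves $\Huno(J_u)$ unchanged, so one can arrange $\|e(u)\|_{L^p}$ and $\Huno(J_u)$ to be comparable, apply the crude estimate, and scale back. The affine map $a$ and the set $\omega$ transform covariantly under this scaling, so the final bounds are of the claimed homogeneous form.

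Concretely the steps are: (1) invoke the $W^{1,p}$-approximation of Section~\ref{sec:approximation} to obtain $\omega$ with $\Huno(\partial\omega)\leq c\,\Huno(J_u)$ and $v\in W^{1,p}(\Omega,\R^2)$ with $v=u$, $\nabla v=\nabla u$ a.e.\ on $\Omega\setminus\omega$ and the quantitative strain bound; (2) perform the dichotomy/rescaling described above to reduce to the case where the strain estimate reads $\|e(v)\|_{L^p(\Omega)}\leq c\,\|e(u)\|_{L^p(\Omega)}$; (3) apply the classical Korn inequality on the connected bounded Lipschitz set $\Omega$ to the Sobolev function $v$, producing the skew-symmetric $A$, the vector $b$, and the affine $a(x)=Ax+b$; (4) restrict both resulting estimates to $\Omega\setminus\omega$ and use $u=v$, $\nabla u=\nabla v$ there to conclude. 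The main obstacle is Step~(2): carefully tracking constants through the rescaling and the boundary-measure bound so that the dependence of $c$ is only on $p$ and $\Omega$, and verifying that in the "large-jump" regime one can indeed discard enough of $\Omega$ while keeping $\Huno(\partial\omega)\leq c\,\Huno(J_u)$ — this requires knowing $\Huno(J_u)\gtrsim 1$ is not assumed, so the argument must instead absorb the small-jump case entirely into the scaling and handle only the genuinely large case by enlarging $\omega$ up to all of $\Omega$. All other steps are routine once the approximation theorem of Section~\ref{sec:approximation} is in hand.
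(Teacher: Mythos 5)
Your proposal has a genuine gap, and the difficulty you flag (``Step~(2) is the main obstacle'') is actually a \emph{different} problem from the one you'd have to solve, so the two-regime/rescaling idea does not repair it.

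The core issue is that the approximation machinery of Section~\ref{sec:approximation} does not produce a single $W^{1,p}$ function on all of $\Omega$. Proposition~\ref{p:ricopr} is a \emph{local} statement on a ball $B_{2\rho}$, and it only applies under the smallness hypothesis $\Huno(J_u\cap B_{2\rho})<\eta(1-s)\rho$, which compares the jump measure to the \emph{radius of the ball} --- a purely geometric condition that has nothing to do with $\|e(u)\|_{L^p}$. Your proposed normalization $u\mapsto u/t$ with $t=\|e(u)\|_{L^p}/\Huno(J_u)$ is a multiplicative scaling of the \emph{values} of $u$, which leaves $J_u$ and $\Huno(J_u)$ unchanged and therefore cannot make the hypothesis of Proposition~\ref{p:ricopr} hold. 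So the dichotomy you set up ($\Huno(J_u)$ large or small compared to $\|e(u)\|_{L^p}^p$) is not the dichotomy needed. Incidentally, your worry that the strain estimate carries a $+\Huno(J_u)$ term is also unfounded: Proposition~\ref{p:ricopr}(v), equation \eqref{e:volume}, is already homogeneous in $e(u)$ --- but that does not rescue the overall plan.

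What actually has to be done is a \emph{covering} argument: one tiles $\Omega$ with a Whitney family of cubes $q_j\subset Q_j$; on each cube where $\Huno(J_u\cap Q_j)$ is small compared to $r_j$ one applies Proposition~\ref{p:ricopr} to get a local bad set $\omega_j$ and a local rigid motion $a_j$; on the cubes where the jump is too large for the proposition one excises a whole column $P_j$ from $\Omega$, whose boundary measure is still controlled by $\Huno(J_u\cap Q_j)/\eta$. One then faces a different obstacle than homogeneity, namely that the local rigid motions $a_j$ differ from cube to cube and must be patched into a single affine map. The paper does this by comparing $a_j$ on overlapping cubes, defining a smooth $a^*$ via a partition of unity, and applying a \emph{weighted} Poincar\'e/Korn inequality in the spirit of \cite[Theorem~3.1]{FrieseckeJamesMueller2002} and \cite[Theorem~4.2]{friedrich}. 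Your plan has no analogue of this gluing step, and it cannot be bypassed: even if you somehow produced $v\in W^{1,p}$ on all of $\Omega$, you would have had to glue local pieces to do so, which is precisely the missing content.
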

This improves a result of \cite{friedrich} to the sharp exponent. Variants of the first inequality 
were first obtained in \cite{ChambolleContiFrancfort,friedrich1}.

The construction of Section \ref{sec:approximation} turns out to be crucial also in proving existence for the Griffith fracture model,
generalizing \cite{DeGiorgiCarrieroLeaci} to the case of linearized elasticity in dimension two.
This will be the object of the forthcoming paper \cite{ContiFocardiIurlano_exist}.

\section{Approximation of \texorpdfstring{$SBD^p$}{SBDp} functions with small jump set}
\label{sec:grid}
\newcommand\n{\nu}
\newcommand\Hu{\mathcal{H}^1}
\newcommand\Lu{\mathcal{L}^1}
\newcommand\de{\delta}
\newcommand\Ri{R_i}
\newcommand\G{\mathcal{G}}
\newcommand\F{\mathcal{F}}
\newcommand{\average}{{\mathchoice {\kern1ex\vcenter{\hrule height.4pt width 6pt depth0pt} \kern-9.7pt} {\kern1ex\vcenter{\hrule height.4pt width 4.3pt depth0pt} \kern-7pt} {} {} }}
\def\Xint#1{\mathchoice
{\XXint\displaystyle\textstyle{#1}}%
{\XXint\textstyle\scriptstyle{#1}}%
{\XXint\scriptstyle\scriptscriptstyle{#1}}%
{\XXint\scriptscriptstyle\scriptscriptstyle{#1}}%
\!\int}
\def\XXint#1#2#3{{\setbox0=\hbox{$#1{#2#3}{\int}$}
\vcenter{\hbox{$#2#3$}}\kern-.5\wd0}}
\def\mint{\Xint-}

In this Section we prove the following approximation result.
\begin{theorem}\label{t:tecnico}
 Let $n=2$, $p\in[1,\infty)$. There exists $\eta>0$ and $\tilde{c}>0$ such that if $J\in\mathcal{B}(B_{2r})$, for some $r>0$, satisfies
 \be{\label{eq:Jsmall}\Hu(J)< 2r\eta,}
 then there exist $R\in (r,2r)$ for which the following holds: 
 for every $u\in SBD^p(B_{2r})$ with $\Hu(J_u\cap B_{2r}\setminus J)=0$ there exists $\phi(u)\in SBD^p(B_{2r})\cap W^{1,p}(B_R,\R^2)$ such that
\begin{enumerate}
  \item\label{tecnico1} $\Hu(J_u\cap \partial B_R)=0$;
  \item\label{tecnico2} $\displaystyle{\int_{B_R}|e(\phi(u))|^qdx\leq \tilde{c}\int_{B_R}|e(u)|^qdx}$, for every $q\in[1,p]$;
  \item\label{tecnico3} $\|u-\phi(u)\|_{L^1(B_R,\R^2)}\leq \tilde{c} R|Eu|(B_R)$;
  \item\label{tecnico4} $u=\phi(u)$ on $B_{2r}\setminus B_R$,  $\Hu(J_{\phi(u)}\cap \partial B_R)=0$;
  \item\label{tecnico5} if $u\in L^\infty(B_{2r},\R^2)$, then $\|\phi(u)\|_{L^\infty(B_{2r},\R^2)}\leq \sqrt{2}\, \|u\|_{L^\infty(B_{2r},\R^2)}$.
 \end{enumerate}

 \end{theorem}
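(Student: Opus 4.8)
The plan is to work on the dyadic annulus $B_{2r}\setminus\overline{B_r}$ and to choose $R\in(r,2r)$ so that the circle $\partial B_R$ is "good": it meets neither $J$ nor $J_u$, it carries only a controlled amount of the symmetrized energy, and the restriction of $u$ to $\partial B_R$ is a nice one-dimensional function. Since $\Hu(J)<2r\eta$, for all but a set of $R$ of small measure the circle $\partial B_R$ is disjoint from $J$, hence from $J_u$ up to an $\Hu$-null set; by the coarea/slicing structure of $SBD^p$ functions (the one-dimensional slices $u^\xi_y$ are in $SBV^p$ of the slicing segment for a.e.\ line), for a.e.\ $R$ the trace $u|_{\partial B_R}$ lies in $SBV^p(\partial B_R;\R^2)$ with $\Hu(J_u\cap\partial B_R)=0$, and by Fubini we may in addition require $\int_{\partial B_R}|e(u)|^p\,d\Hu^1\le C r^{-1}\int_{B_{2r}}|e(u)|^p\,dx$ and $|Eu|(\partial B_R)=0$, together with $\int_{\partial B_R}|u|\,d\Hu^1\le Cr^{-1}\|u\|_{L^1(B_{2r})}$ and a comparable bound for $|Eu|$ on annuli shrinking to $\partial B_R$. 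The parameter $\eta$ is chosen small enough that the set of $R$ failing the first (geometric) condition has measure $<r$, so a good $R$ exists.

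Once $R$ is fixed, the construction of $\phi(u)$ inside $B_R$ proceeds via a \emph{countable adapted mesh}, as announced in the introduction. Concretely I would choose a sequence of radii $r=\rho_0<\rho_1<\rho_2<\dots\to R$ accumulating only at $R$, chosen — again by a measure-theoretic pigeonhole on the slicing — so that each circle $\partial B_{\rho_k}$ is good in the above sense; then on each dyadic-type annulus $A_k=B_{\rho_{k+1}}\setminus\overline{B_{\rho_k}}$ one builds a finite triangulation whose vertices lie on the good circles, with mesh size comparable to $\mathrm{width}(A_k)$, and defines $\phi(u)$ to be the piecewise-affine interpolant of suitable averages of $u$ over mesh cells. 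The key point is that, because $u$ has no jump on the circles $\partial B_{\rho_k}$ and its tangential slices are $SBV^p$ with controlled derivative, the interpolant is continuous across all interfaces, so $\phi(u)\in W^{1,p}(B_R;\R^2)$; the energy estimate \ref{tecnico2} follows from the standard finite-element interpolation bound together with a Korn–Poincaré inequality on each annulus applied to $u$ minus a rigid motion (the rigid motions on adjacent annuli being forced to nearly agree because they share a good circle), summing the geometric series in $k$; estimate \ref{tecnico3} is the corresponding $L^1$ interpolation error, again summed over $k$, using $|Eu|(A_k)$ and telescoping the rigid-motion mismatches. Property \ref{tecnico4} is automatic from the construction (we do not touch $u$ outside $B_R$, and $\partial B_R$ was chosen good), and the $L^\infty$ bound \ref{tecnico5} follows because the interpolated values are convex combinations of averages of $u$ over sets of the form (mesh cell), so each component of $\phi(u)$ is bounded by $\|u\|_\infty$ in modulus, hence $|\phi(u)|\le\sqrt2\,\|u\|_{L^\infty}$ componentwise.

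The main obstacle — and the genuinely new part relative to the $SBV$ literature — is controlling the energy when passing from one annulus to the next: one does not have a scalar Poincaré inequality but must use a Korn-type inequality on annuli, and the rigid motion $a_k(x)=A_kx+b_k$ extracted from $u$ on $A_k$ via Korn must be shown to drift only by a controlled amount from $a_{k+1}$, the control being in terms of $\int_{A_k\cup A_{k+1}}|e(u)|^p$ plus the jump contribution on that region, which by \eqref{eq:Jsmall} is small. Making the geometric constants uniform (independent of $r$, by scaling, and of the accumulation rate $\rho_k\to R$) while keeping the triangulation shape-regular is the delicate bookkeeping; I expect this to occupy the bulk of Section~\ref{sec:grid}, with the choice of $\eta$ ultimately dictated by the requirement that the accumulated rigid-motion drift plus the jump energy not destroy the continuity of the interpolant and the summability of the energy series.
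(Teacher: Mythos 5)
Your high‑level plan (pick a good radius $R$, build a mesh that refines toward $\partial B_R$, interpolate, control the error annulus by annulus) is the same shape as the paper's. But three of the steps you gloss over are precisely where the paper's real work lies, and as stated your version has gaps that I do not see how to close.

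First, the choice of $R$. You only ask that the single circle $\partial B_R$ be disjoint from $J$ and carry controlled trace/energy. The paper's proof needs much more: it chooses $R$ so that the \emph{bulk} jump measure in every dyadic annulus $B_R\setminus B_{R-\delta_k}$, $\delta_k=R2^{-k}$, is bounded by $20\eta\delta_k$, uniformly in $k$ (estimate \eqref{eq:corona}, obtained via a Vitali covering argument). This quantitative condition is what makes the slicing argument on the $k$‑th layer of the mesh go through: the mesh refines at scale $\delta_k$ near $\partial B_R$, so you need $\Hu(J\cap A_k)$ small relative to $\delta_k$, not merely a null jump on the bounding circles. The global hypothesis $\Hu(J)<2r\eta$ says nothing about how $J$ is distributed radially; all of it could be concentrated just inside $\partial B_R$, and then your construction has no control in the innermost annuli.

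Second, you assert that for a.e.\ $R$ the trace $u|_{\partial B_R}$ lies in $SBV^p(\partial B_R;\R^2)$ with controlled tangential derivative. That is not available for $SBD^p$ functions: the slicing structure of $BD$ controls only the \emph{normal} component $u\cdot\nu$ along lines of direction $\nu$ (i.e.\ $u^\nu_z(t)=u(z+t\nu)\cdot\nu$), not the full vector, and the tangential derivative of $u$ on a circle involves the skew part of $\nabla u$, which is not in $L^p$ for $SBD^p$. This is exactly why the paper slices along \emph{straight segments} between mesh points and exploits the identity $e(\phi(u))\nu\cdot\nu=\mint_{s_{x,y}}(u^\nu_z)'\,dt$ rather than working with traces on circles. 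Related to this, invoking ``a Korn--Poincaré inequality on each annulus applied to $u$ minus a rigid motion'' is circular: a Korn inequality in $SBD^p$ on a domain with an unknown jump set is precisely the kind of statement being built here (Theorem~\ref{theo:korn}), and it is not available on annuli that still contain a portion of $J$.

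Third, you interpolate ``averages of $u$ over mesh cells'' rather than pointwise values. Besides losing the clean slicing identity above, this bypasses the actual selection argument the paper needs: after the dyadic condition is secured, one still has to show that for a large fraction of candidate vertices $x\in B(\overline x,\alpha\delta_k)$, $y\in B(\overline y,\alpha\delta_k)$ the segment $S_{x,y}$ misses $J_u$, so that $u^\nu_z\in W^{1,1}(s_{x,y})$, the endpoint values $u(x),u(y)$ are well‑defined, and they are close to a single rigid motion (properties \ref{tecp1}--\ref{tecp4} in the paper). The iterative selection of a consistent set of good vertices across all neighboring pairs is the combinatorial heart of Section~\ref{sec:grid}; it is not a detail one can defer. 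Your $L^\infty$ and $L^1$ observations are fine, but the energy estimate \ref{tecnico2} and the $W^{1,p}$ membership both rest on the jump‑avoiding segment slicing, which your proposal does not set up.
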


 \begin{proof}
  
We follow an idea of \cite{ContiSchweizer1,ContiSchweizer2}. 

Arguing as in \cite[Lemma 4.3]{ContiSchweizer1} we first claim that there exists $R\in (r,2r)$ such that for 
$\de_k:= R\,2^{-k}$ we have
\ba{\label{eq:inter}&\Hu({J}\cap\partial B_R)=0,\\ 
\label{eq:corona}&\Hu({J}\cap(B_R\setminus B_{R-\de_k}))<20\eta\de_k, \quad \textrm{for every $k\in\N$}.}
To prove this, we first observe that (\ref{eq:inter}) holds for almost every $R$, therefore it suffices to show that 
(\ref{eq:corona}) holds on a set of positive measure. 
We consider the family of intervals
$$\{[R-\de_{k},R]: 
\Hu({J}\cap(B_R\setminus B_{R-\de_k}))\ge 20\eta\de_{k}\}$$
and we define ${I}$ as the union of all intervals of the family,
with $R\in (r,2r)$, $k\in\N$. By Vitali's covering theorem, there
exists a countable set $(R_i, k_i)_{i\in\N}$ such that the corresponding intervals $[\Ri-\de_{k_i},\Ri]
$ 
are pairwise disjoint and cover at least one fifth of ${I}$.
Therefore by \eqref{eq:Jsmall}  we obtain
$$2r\eta>\Hu({J}\cap B_{2r})\geq \sum_{i\in \N}\Hu({J}\cap (B_{R_i}\setminus B_{R_i-\de_{k_i}}))\geq \sum_{i\in \N} 20\eta \de_{k_i}.$$
Since $\de_{k_i}=\Lu([\Ri-\de_{k_i},\Ri])$,
we conclude that $\Lu({I})<r$. This proves the existence of $R$ such that
(\ref{eq:inter}) and (\ref{eq:corona}) hold, which is fixed for the rest of the proof.

\medskip
We define $R_k:= R-\de_k$ and $\overline{x}_{k,j}:=R_k(\cos\frac{2\pi j}{2^k},\sin\frac{2\pi j}{2^k})$, $j=1,\dots,2^k$. 
We say that $\overline{x}_{k,j}$ and $\overline{x}_{k',j'}$ are neighbors if either $k=k'$ and $j= j'\pm 1$, working modulo $2^k$, 
or {(up to a permutation)} $k=k'+1$ and $j\in \{2j'-1,2j',2j'+1\}$, again modulo $2^k$. This gives a decomposition of $B_R$ into countably many triangles, 
whose angles are uniformly bounded away from 0 and $\pi$, see Figure \ref{fig:grid1}.

\begin{figure}
 \begin{center}
  \includegraphics[height=5cm]{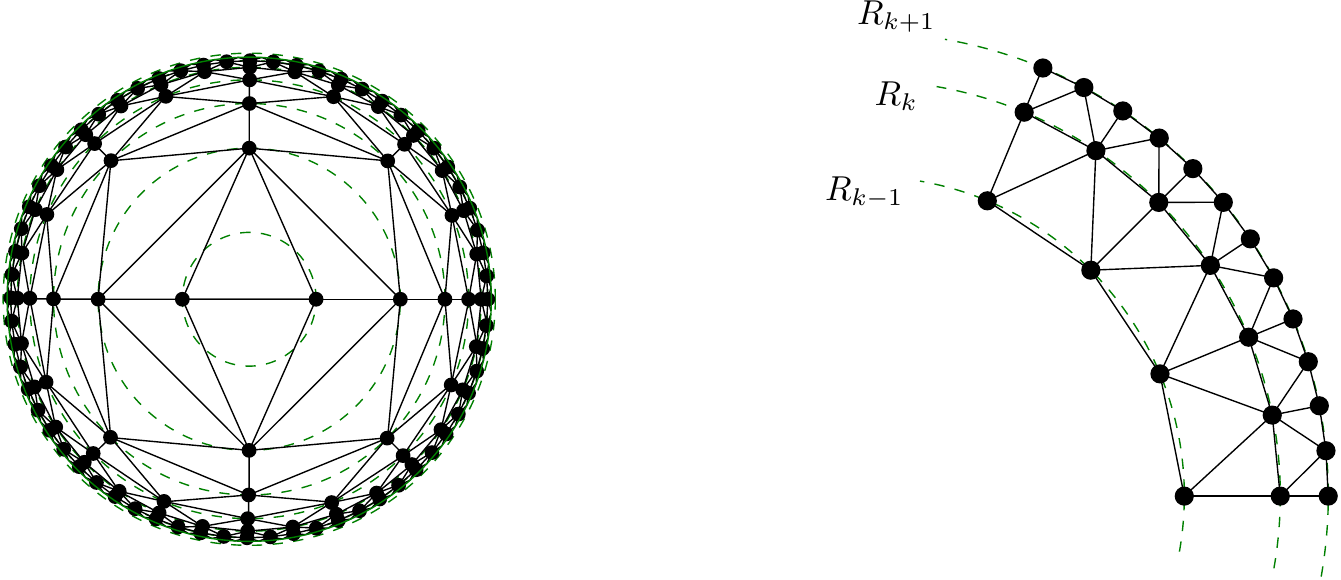}
\end{center}
\caption{Sketch of the construction of the grid in the proof of Theorem \ref{t:tecnico}.}
\label{fig:grid1}
\end{figure}

We shall construct $\phi(u)$ as a linear interpolation on a triangulation whose vertices are slight modifications of $\overline{x}_{k,j}$. 
Following the idea of \cite[Proposition 2.2]{ContiSchweizer2}, we next show how to construct the modified triangulation.
We start off considering two neighboring points $\overline{x}$ and $\overline{y}$ in $\{\overline{x}_{k,j}\}_{k,j}$, connected by the segment
$S_{\overline x,\overline y}\subset \overline B_{R_{k+1}}\setminus B_{R_{k-1}}$ for some $k$, and notice that $c_1\de_k\leq|\overline{x}-\overline{y}|\leq c_2\de_k$ 
for some $c_1\in(0,1)$, $c_2>1$ independent from $k$.
Let $\alpha:=c_1/(8c_2)$ and consider the convex envelope
\be{\label{eq:C}
O_{\overline{x},\overline{y}}:=\conv(B(\overline{x},\alpha\de_k)\cup B(\overline{y},\alpha\de_k)).}
Let $a_{\overline{x},\overline{y}}$ denote the infinitesimal rigid movement appearing in the Poincar\'{e}'s inequality for $u$ on 
the set 
\begin{equation*}
Q_{\overline{x},\overline{y}}:=\{\xi\in B_R:\dist(\xi,S_{\overline{x},\overline{y}})<|\overline{x}-\overline{y}|/{(8c_2)}\}.
\end{equation*}

Given $\vartheta\in(0,1)$, let us prove that for $\eta$ sufficiently small and $\tilde{c}$ sufficiently large, depending only on $\vartheta$, there exists 
a subset $F\subset B(\overline{x},\alpha\de_k){\times} B(\overline{y},\alpha\de_k)$
with $\frac{(\calL^2\times\calL^2)(F)}{\calL^2(B_{\alpha\de_k})^2}< \vartheta$, such that for every $(x,y)\notin 
F$ the one-dimensional section $u_z^\nu$ has the 
following properties:
\begin{enumerate}
\renewcommand\theenumi{{(P\arabic{enumi})}}
\renewcommand\labelenumi{\theenumi}
\item\label{tecp1} $u^{\n}_{z}\in SBV( s_{x,y} )$;
 \item\label{tecp2} $\calH^0(J_{u^{\nu}_{z}})=0$, so that $u^{\nu}_z\in W^{1,1}( s_{x,y} )$;
 \item\label{tecp3} $\displaystyle{\int_{ s_{x,y} }|(u^{\nu}_{z})'|dt\leq \frac{\tilde{c}}{\de_k}\int_{O_{\overline{x},\overline{y}}}|e(u)|dx'}$;
 \item\label{tecp4} $\displaystyle{|u(\xi)-a_{\overline{x},\overline{y}}(\xi)|\leq \frac{\tilde{c}}{\de_k}|Eu|(Q_{\overline{x},\overline{y}})}$, 
\textrm{ for $\xi=x,y$.}
\end{enumerate} 
Here $u^{\nu}_z(t):=u(z+t{\nu})\cdot {\nu}$ is the slice of $u$ along the line of direction 
\be{\label{eq:n}{\nu}:=\frac{x-y}{|x-y|},} 
and passing through
\be{\label{eq:z}z:=(\Id-\nu\otimes\nu)x \in \R\nu^{\perp}\cap (x+\R \nu)}
where $\R\nu^\perp$ is the linear space orthogonal to $\nu$,
and $s_{x,y}\subset\R$ is the segment such that $z+s_{x,y}\nu=S_{x,y}$,
see Figure \ref{fig:segment}.
\begin{figure}
 \begin{center}
  \includegraphics[height=5cm]{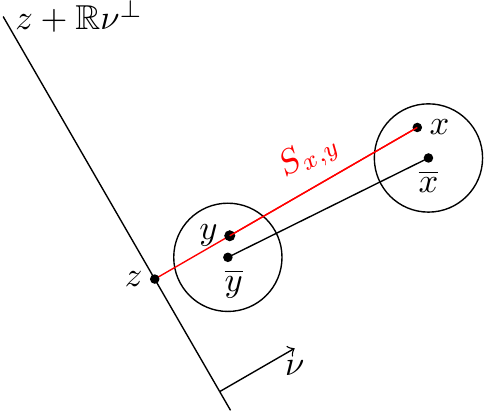}
\end{center}
\caption{{Slice along the line of direction $\nu=(x-y)/|x-y|$ passing through $z$ in the proof of Theorem~\ref{t:tecnico}.}}
\label{fig:segment}
\end{figure}

In order to obtain property \ref{tecp1} we first define the measure $\mu_{ \nu ,z}:=|u^ \nu _z|\calL^1$ and we observe that
\begin{align}\label{eq:sez}
\int_{B(\overline{x},\alpha\de_k){\times} B(\overline{y},\alpha\de_k)}&\mu_{ \nu ,z}( s_{x,y} )dx\,dy\\
&\leq
c\de_k^2\int_{B(\overline{x},\alpha\de_k)}dx\int_{\{| \nu -\overline{ \nu }|\leq c(\alpha)\}} \mu_{ \nu ,z}((O_{\overline{x},\overline{y}})^{\nu}_z)d \calH^1(\nu) ,  \nonumber
\end{align}
by the change of variables $y=x+s \nu $, where $\overline{\nu}$ is defined as $\nu$ with $\overline{x},\overline{y}$ in place of $x,y$ { and $(O_{\overline{x},\overline{y}})^\nu_z\subset\R$ corresponds to $O_{\overline{x},\overline{y}}\cap (z+\R\nu)$.} 
By Fubini's
theorem the last term in the previous inequality is less than or equal to
\be{\label{eq:sez2}
c\de_k^3\int_{\{| \nu -\overline{ \nu }|\leq c(\alpha)\}}d \calH^1(\nu) \int_{\nu^\perp}\mu_{ \nu ,z}(
(O_{\overline{x},\overline{y}})^ \nu _z)d\Hu(z)\leq c\de_k^3\mu(O_{\overline{x},\overline{y}}),
}
where $\mu(O_{\overline{x},\overline{y}}):=\int_{O_{\overline{x},\overline{y}}}|u|dx'$. By \eqref{eq:sez} and \eqref{eq:sez2} the set $F_1\subset B(\overline{x},\alpha\de_k){\times} B(\overline{y},\alpha\de_k)$ of points $(x,y)$, for which the inequality
$$\mu_{ \nu ,z}( s_{x,y} )>\frac{\tilde{c}}{\de_k}\mu(O_{\overline{x},\overline{y}})$$
holds, satisfies $\frac{(\calL^2\times\calL^2)(F_1)}{\calL^2(B_{\alpha\de_k})^2}< \vartheta/ {16}$,
for $\tilde{c}$ large enough, depending on $\vartheta$ and $\alpha$.

For $(x,y)\in B(\overline{x},\alpha\de_k){\times} B(\overline{y},\alpha\de_k)\setminus F_1$ we now repeat the argument 
in \eqref{eq:sez} and \eqref{eq:sez2} above redefining
$$\mu_{ \nu ,z}:=|D(u^ \nu _z)|.$$
We find that for $(x,y)$ out of a small (in the previous sense) set $F'_1$ one has $u^ \nu _z\in SBV( s_{x,y} )$ and
$$|D(u^ \nu _z)|( s_{x,y} )\leq \frac{\tilde{c}}{\de_k}|Eu|(O_{\overline{x},\overline{y}}).$$

As for property \ref{tecp2}, we use \eqref{eq:sez} and \eqref{eq:sez2} with $\mu_{ \nu ,z}:=\calH^0\res (J_{u^ \nu _z}\cap s_{x,y} )$ and 
$\mu:=\Hu\res (J_u\cap O_{\overline{x},\overline{y}})$.
Now \eqref{eq:Jsmall} implies 
$$\int_{B(\overline{x},\alpha\de_k){\times} B(\overline{y},\alpha\de_k)}\calH^0(J_{u^ \nu _z}\cap s_{x,y} )dx\,dy\leq c\de_k^4\eta,$$
and hence the set $F_2$ of points $(x,y)$ for which $\calH^0(J_{u^ \nu _z}\cap s_{x,y} )>1/2$ is also small in the previous sense, for $\eta$ small enough.
Note that this is the only step which requires the hypothesis on the dimension $n=2$.

Analogously properties \ref{tecp3} and \ref{tecp4} can be derived. From the argument above it is straightforward that for many points $x\in B(\overline{x},\alpha\de_k)$, 
still in the sense of a large $\vartheta$-fraction of $B(\overline{x},\alpha\de_k)$, there are many points $y\in B(\overline{y},\alpha\de_k)$ for which $(x,y)\notin F$. 

\medskip

Let us construct now the modified grid with an iterative process (see also \cite[Proposition 3.4]{ContiSchweizer2}). 
We will use the notation $B_i$ to indicate the balls $B(\overline{x}_{k,j},\alpha\de_k)$,
{ lexicographically ordered}.

We start by fixing a point $x_0\in B_0$ for which there are many good choices in each neighboring ball. We next select
$x_1\in B_1$ among the points which are good choices for $x_0$ and which have many good choices in each neighboring
subsequent ball $B_i$, $i\geq 2$. Iterating the process, the point $x_m\in B_m$ will be taken among the good choices
for the neighboring previously fixed $x_i$, $i<m$, and with the property that have many good choices in the neighboring
subsequent $B_i$, $i>m$. Since each ball can have at most seven neighbors, at each step we select $x_m$ avoiding just 
a small subset of $B_m$.

We call $S$ the set of points obtained by this process and we construct a new triangulation,
with $x,y$ neighbors if and only if $\bar x$, $\bar y$ are neighbors.
Notice that again 
\be{\label{eq:nondeg}c_1\de_k\leq|x-y|\leq c_2\de_k,} 
for every couple of neighboring points $x,y$, with the same $k$
as for the corresponding reference points
$\overline x$ and $\overline y$, and suitable $c_1,c_2>0$ independent from $k$.
We finally define $\phi(u)$ as the linear interpolation 
between the values of  $u(x)$, $x\in S$ on each triangle of the triangulation. 
 
\medskip

Fixed a triangle $T$ and any couple of its vertices $x,y$, we compute a component of the constant matrix $e(\phi(u))$ on $T$ by 
\be{\label{eq:eu}e(\phi(u))\nu\cdot  \nu=\frac{(\phi(u)(x)-\phi(u)(y))\cdot \nu}{|x-y|}=\mint_{ s_{x,y} }(u^\nu_z)'dt,}
where $\nu$ and $z$ are defined in \eqref{eq:n} and \eqref{eq:z}. We used the fact that $u$ and $\phi(u)$ agree on $x$ and $y$ 
and that $u$ is $W^{1,1}( s_{x,y} )$ by the choice of $x$ and $y$. By \eqref{eq:eu}, \eqref{eq:nondeg}, and property \ref{tecp3} above it follows
$$|e(\phi(u))\nu\cdot \nu|\leq \frac{\tilde{c}}{\de_k^2}\int_{C}|e(u)|dx',$$
where $C$ is defined in \eqref{eq:C}. We recall that here and henceforth $\tilde{c}$ can possibly change. Letting $\nu$ vary among the directions of the sides of $T$, we obtain a control on 
the whole $|e(\phi(u))|$ thanks to \eqref{eq:nondeg}
\be{\label{eq:euconv}|e(\phi(u))|\leq \frac{\tilde{c}}{\de_k^2}\int_{C_T}|e(u)|dx',}
where $C_T$ denotes the convex envelope  
\begin{equation*}
C_T:=\conv(\cup B(\overline{x},\alpha\de_k)) 
\end{equation*}
and the union is taken over the three vertices $\overline{x}$ in the old triangulation corresponding
to the three vertices of $T$. We remark that 
$B(\overline x,\alpha\delta_k)\subset B_{R_{k+1}}\setminus B_{R_{k-1}}$ for all $\overline x\in \partial B_{R_k}$, therefore 
the sets $C_T$ have finite overlap.

We are ready to prove property \ref{tecnico2}. By Jensen's inequality and \eqref{eq:euconv} we have for $1\leq q\leq p$
\begin{align*}
\int_T |e(\phi(u))|^qdx'&= \calL^2(T)|e(\phi(u))|^q\leq \tilde{c}\calL^2(C_T) \Big(\mint_{C_T}|e(u)|dx'\Big)^q\nonumber\\
&\leq \tilde{c}\int_{C_T}|e(u)|^qdx',
\end{align*}
and finally summing up on all triangles $T$ we get the conclusion. 

In order to prove properties \ref{tecnico3} and \ref{tecnico4} we estimate 
\be{\label{eq:tr1}\int_{T} |u-\phi(u)|dx'\leq \int_{T} |u-a_{\overline{x},\overline{y}}|dx' + \int_{T}|a_{\overline{x},\overline{y}}-\phi(u)|dx',}
where $T$ is again a triangle of the modified triangulation
with vertices $x,y,z$, 
$\overline{x},\overline{y},\overline{z}$ denote the three
corresponding vertices  of the old triangulation,
$a_{\overline{x},\overline{y}}$ is the infinitesimal rigid motion appearing in the Poincar\'e's inequality for $u$ on $Q_{\overline{x},\overline{y}}$ (see item \ref{tecp4} above).

Let us study first the second term in \eqref{eq:tr1}. Since $a_{\overline{x},\overline{y}}-\phi(u)$ is affine, it achieves its maximum on a vertex $\xi$ of $T$, therefore  
\begin{equation*}
\int_{T}|a_{\overline{x},\overline{y}}-\phi(u)|dx'\leq c\de_k^2|a_{\overline{x},\overline{y}}(\xi)-\phi(u)(\xi)|
=c\de_k^2|a_{\overline{x},\overline{y}}(\xi)-u(\xi)|. 
\end{equation*}
Notice that if $\xi=\overline{z}$ then
\ba{\label{eq:tr4}c\de_k^2|a_{\overline{x},\overline{y}}(\xi)-u(\xi)|&\leq& c\de_k^2|a_{\overline{x},\overline{y}}(\xi)-a_{\overline{x},\xi}(\xi)|+c\de_k^2|a_{\overline{x},\xi}(\xi)-u(\xi)|
\nonumber\\
&\leq& c\int_{B(\overline{x},\alpha\delta_k)}|a_{\overline{x},\overline{y}}-a_{\overline{x},\xi}|dx'+c\de_k |Eu|(Q_{\overline{x},\xi}),}
where we used the fact that $a_{\overline{x},\xi},a_{\overline{x},\overline{y}}$ are affine and item \ref{tecp4} above;
if $\xi\in\{\overline x, \overline y\}$ then only the second term appears.

By \eqref{eq:tr1}-\eqref{eq:tr4}, the triangular inequality, and Poincar\'e's inequality we conclude
\be{\label{eq:tr5}\int_{T} |u-\phi(u)|dx'\leq c\delta_k |Eu|(Q_{T}),}
where $Q_T:=Q_{\overline{x},\overline{y}}\cup Q_{\overline{y},\overline z}\cup Q_{\overline{z},\overline x}$.
Finally summing up over $T$ we obtain property \ref{tecnico3}. 

We prove now property \ref{tecnico4}, property \ref{tecnico5} holding true by construction.
We define $\phi(u):=u$ outside $\overline B_R$ and know that $\phi(u)\in W^{1,p}(B_R,\R^2)\cap SBD(B_{2r})$.
It remains to prove that the traces on $\partial B_R$
coincide, or, equivalently, that $\calH^1(J_{\phi(u)}\cap B_R)=0$.
Let $\psi_k\in C^\infty(B_R)$ be such that $\psi_k=0$
on $B_{R_k}$, $\psi_k=1$ in a neighborhood of $\partial B_R$, and $|\nabla\psi_k|\leq c/\de_k$. We define 
$v_k:=(u-\phi(u))\psi_k\in SBD(B_R)$ and we prove that $v_k\to 0$ strongly in $BD$, this implying in turn
$v_k|_{\partial B_R}\to 0$ in $L^1(\partial B_R,\R^2)$ in the sense of traces and therefore property  \ref{tecnico4}. Clearly
\begin{equation*}
\int_{B_R}|v_k|dx\leq\int_{B_R\setminus B_{R_k}}|u-\phi(u)|dx\to0 
\end{equation*}
by the dominated convergence theorem. Finally, using \eqref{eq:tr5} and the fact that the triangles have finite overlap,
\bes{|Ev_k|(B_R)&\leq&|E(u-\phi(u))|(B_R\setminus B_{R_k})+\frac{c}{\de_k}\int_{B_R\setminus B_{R_k}}|u-\phi(u)|dx\\
&\leq&\tilde{c}|E(u-\phi(u))|(B_R\setminus B_{R_k}),
}
the last term tends to $0$ and this concludes the proof of property \ref{tecnico4}.

\end{proof}

\section{Regularity of \texorpdfstring{$SBD^p$}{SBDp} functions with small jump set}
\label{sec:approximation}

We first discuss how $SBD^p$ functions can be approximated by $W^{1,p}$ functions locally away from the 
jump set (Section \ref{sec:approxbulk}), and then how they can be approximated by piecewise $W^{1,p}$ functions around the jump set
(Section \ref{sec:approxinterface}).
Our approximation result also leads to the Korn inequality stated in Theorem \ref{theo:korn}.
The key ingredient for all these results is the  construction of Theorem \ref{t:tecnico}.
Throughout the section $\eta\in(0,1)$ will be the constant from Theorem~\ref{t:tecnico} and $n=2$.

\subsection{Approximation of \texorpdfstring{$SBD^p$}{SBDp} functions with \texorpdfstring{$W^{1,p}$}{W1p} functions}
\label{sec:approxbulk}
We shall use that the construction of Theorem \ref{t:tecnico}, using a suitable covering argument,
permits to 
approximate $SBD^p$ functions by $W^{1,p}$ functions which coincide away from a small neighborhood of the jump set.
The neighborhood is the union of countably many balls, such that each of them contains an amount of jump set
proportional to the radius. Before discussing the covering argument
in Proposition \ref{p:ricopr}, we show that (away from the boundary) almost any point of 
the jump set is the center of a ball with the appropriate
density.

\begin{lemma}\label{l:rx}
Let 
 $s\in (0,1)$.
Let  $J\in\mathcal{B}(B_{2\rho})$, for some $\rho>0$, be such that $\Huno(J)<\eta(1-s)\rho$
then for $\Huno$-a.e. $x\in J\cap B_{2s\rho}$ there exists a radius $r_x\in(0,(1-s)\rho)$ such that
\begin{equation}\label{e:rx0}
\Huno\big(J\cap \partial B_{r_x}(x)\big)=0,
\end{equation}
\begin{equation}\label{e:rx}
\eta\, {r_x}\leq\Huno\big(J\cap B_{r_x}(x)\big)\leq
\Huno\big(J\cap B_{2r_x}(x)\big)<{2\,\eta\, r_x}.
\end{equation}
\end{lemma}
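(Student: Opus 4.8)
The idea is to take $r_x$ to be the \emph{largest} admissible radius for the lower bound in \eqref{e:rx}, and then to show that the upper bound and the vanishing condition \eqref{e:rx0} follow automatically from this maximality. The only external ingredient is the classical lower density estimate for Hausdorff measures (see e.g.\ \cite{ambrosio}): since $J$ is a Borel set with $\Huno(J)<\infty$, for $\Huno$-a.e.\ $x\in J$ one has $\limsup_{r\to0^+}r^{-1}\Huno(J\cap B_r(x))\ge1$. As $\eta\in(0,1)$, this implies in particular that for $\Huno$-a.e.\ $x\in J\cap B_{2s\rho}$ there are arbitrarily small radii $r>0$ with $\Huno(J\cap B_r(x))\ge\eta r$; fix any such $x$ for the rest of the argument.

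Then I would set
\[
r_x:=\sup\big\{r\in(0,(1-s)\rho):\ \Huno(J\cap B_r(x))\ge\eta r\big\}\,.
\]
By the previous paragraph this set is non-empty, and every $r$ in it obeys $\eta r\le\Huno(J\cap B_r(x))\le\Huno(J)<\eta(1-s)\rho$, so $r\le\Huno(J)/\eta<(1-s)\rho$; hence $r_x\in(0,(1-s)\rho)$ (and, since $|x|<2s\rho$ and $2r_x<2(1-s)\rho$, also $\overline{B}_{2r_x}(x)\subset B_{2\rho}$). The left inequality in \eqref{e:rx} follows from left-continuity of $t\mapsto\Huno(J\cap B_t(x))$ (continuity from below of the measure): taking $r_n\uparrow r_x$ from the set above gives $\Huno(J\cap B_{r_x}(x))=\lim_n\Huno(J\cap B_{r_n}(x))\ge\lim_n\eta r_n=\eta r_x$.

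For the upper bound in \eqref{e:rx} I would split into two cases. If $2r_x<(1-s)\rho$, then $2r_x$ is admissible but lies strictly above $r_x=\sup$, so it does not belong to the set above, i.e.\ $\Huno(J\cap B_{2r_x}(x))<\eta(2r_x)=2\eta r_x$. If instead $2r_x\ge(1-s)\rho$, then $2\eta r_x\ge\eta(1-s)\rho>\Huno(J)\ge\Huno(J\cap B_{2r_x}(x))$. In both cases \eqref{e:rx} holds, the middle inequality being trivial since $B_{r_x}(x)\subset B_{2r_x}(x)$. Finally, \eqref{e:rx0} follows by contradiction: if $\Huno(J\cap\partial B_{r_x}(x))>0$, then $\Huno(J\cap\overline{B}_{r_x}(x))=\Huno(J\cap B_{r_x}(x))+\Huno(J\cap\partial B_{r_x}(x))>\eta r_x$, and continuity from above of $\Huno$ (a finite measure here) along $\bigcap_{t>r_x}B_t(x)=\overline{B}_{r_x}(x)$ yields $\lim_{t\downarrow r_x}\Huno(J\cap B_t(x))>\eta r_x$; hence $\Huno(J\cap B_t(x))>\eta t$ for some $t\in(r_x,(1-s)\rho)$, contradicting the maximality of $r_x$.

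The only step that is more than bookkeeping is this last one: one must observe that the chosen maximal radius $r_x$ is automatically a ``continuity radius'' for the restriction of $\Huno$ to $J$ with respect to the circles $\partial B_t(x)$, since a positive amount of $J$ sitting on $\partial B_{r_x}(x)$ would keep the density at scale $t$ above $\eta$ slightly beyond $r_x$, against the definition of $r_x$. All the rest — monotonicity and one-sided continuity of $t\mapsto\Huno(J\cap B_t(x))$, and the arithmetic with $\Huno(J)<\eta(1-s)\rho$ — is routine.
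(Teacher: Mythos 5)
Your proof is correct but takes a genuinely different route from the paper. The paper first chooses, for each $x$, a scale $\lambda_x\in(\rho,2\rho)$ such that every dyadic circle $\partial B_{\lambda_x/2^k}(x)$, $k\in\N$, is $\Huno$-null for $J$ (possible since only countably many concentric circles can charge the finite measure $\Huno\LL J$), and then defines $r_x$ as the largest dyadic radius $\lambda_x/2^k$ at which $\Huno(J\cap B_{\lambda_x/2^k}(x))\ge\eta\lambda_x 2^{-k}$ holds; condition \eqref{e:rx0} is thus built in by the choice of $\lambda_x$, and the strict upper bound in \eqref{e:rx} is read off by comparing with the next dyadic radius $2r_x=\lambda_x/2^{k-1}$, at which the inequality must fail by maximality. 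You instead take a continuous supremum $r_x=\sup\{r\in(0,(1-s)\rho):\Huno(J\cap B_r(x))\ge\eta r\}$, recover the lower bound in \eqref{e:rx} from the left-continuity of $t\mapsto\Huno(J\cap B_t(x))$, the upper bound from maximality together with a case split handled by $\Huno(J)<\eta(1-s)\rho$, and — the nice observation — derive \eqref{e:rx0} from maximality and right-continuity rather than arranging it in advance. Both arguments rest on the same lower density theorem for $\Huno$ from \cite{ambrosio} to make the admissible set of radii non-empty. The continuous version dispenses with the $\lambda_x$-selection step and gets \eqref{e:rx0} for free, at the cost of a few one-sided continuity checks; the dyadic version avoids those checks because the max is attained within a prescribed sequence. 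Both are valid; yours is arguably the cleaner of the two on the \eqref{e:rx0} step.
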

\begin{proof}
{ 
We fix $x\in J\cap B_{2s\rho}$, choose $\lambda_x\in(\rho ,2\rho)$ such that }
$\Huno\big(J\cap \partial B_{\sfrac{\lambda_x}{2^{k}}}(x)\big)=0$ for all $k\in\N$, and define 
\[
r_x:=\max\{\sfrac{\lambda_x}{2^{k}}:\,k\in\N,\,
\Huno\big(J\cap B_{\sfrac{\lambda_x}{2^{k}}}(x)\big)\geq {{\eta\,\lambda_x}{2^{-k}}}\}.
\]
{The set is nonempty for $\calH^1$-almost every $x$ because $\eta<1$.
The estimates 
\eqref{e:rx} hold
 by definition.}
To conclude that $r_x<(1-s)\rho$ it is enough to notice that the opposite inequality would give the ensuing contradiction
\[
\Huno(J)\geq \Huno\big(J\cap B_{r_x}(x)\big)\geq \eta\,r_x 
\geq{(1-s)}\eta\,\rho>\Huno(J).
\]
\end{proof}

We are now ready to prove the main result of the section via a covering argument, Lemma~\ref{l:rx}, and 
Theorem~\ref{t:tecnico}.

\begin{proposition}\label{p:ricopr}
Let $p\in(1,\infty)$, $n=2$.
There exists a universal constant $c>0$ such that if $u\in SBD^p(B_{2\rho})$, {$\rho>0$,} satisfies
\[
\Huno(J_u\cap B_{2\rho})<{\eta\,(1-s)\rho}
\]
for $\eta\in(0,1)$ as in Theorem~\ref{t:tecnico} and  some $s\in(0,1)$,
then there is a countable family $\calF=\{B\}$ of closed balls of radius $r_B<(1-s)\rho$,
each contained in $B_{(1+s)\rho}$, and a field $w\in SBD^p(B_{2\rho})$ 
such that
\begin{enumerate}
\item $\rho^{-1}\sum_{\calF}{\Ln}\big(B\big)+\sum_{\calF}\Huno\big(\partial B\big)
\leq{\sfrac {c}\eta}\,\Huno(J_u\cap B_{2\rho})$;
\item $\Huno\big(J_u\cap\cup_{\calF}\partial B\big)=\Huno\big((J_u\cap {B_{2s\rho}})\setminus \cup_{\calF}B\big)=0$;
\item $w= u$ {$\Ln$-a.e.} on $B_{2\rho}\setminus\cup_{\calF}B$;
\item {$w\in W^{1,p}(B_{2s\rho},\Rdue)$} and $\Huno(J_w\setminus J_u)=0$;
\item 
\begin{equation}\label{e:volume}
 \int_{\cup_{\calF}B}|e(w)|^pdx\leq c\int_{\cup_{\calF}B}|e(u)|^pdx; 
\end{equation}
and there exists a skew-symmetric matrix $A$ such that 
\begin{equation}\label{e:korn}
 \int_{B_{2s\rho}\setminus\cup_{\calF}B}|\nabla u-A|^pdx\leq c(p)\int_{B_{2\rho}}|e(u)|^pdx; 
\end{equation}
\item $\|u-w\|_{L^1(B,\Rdue)}\leq c\,r_B\,|Eu|(B)$, for every $B\in \calF$;
\item if, additionally, $u\in L^\infty(B_{2\rho},\Rdue)$ then $w\in L^\infty(B_{2\rho},\Rdue)$ with
\[
\|w\|_{L^\infty(B_{2\rho},\Rdue)}\leq {c}\|u\|_{L^\infty(B_{2\rho},\Rdue)}.
\]
\end{enumerate}
\end{proposition}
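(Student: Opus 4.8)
The plan is to deduce Proposition~\ref{p:ricopr} from a Vitali-type covering argument based on Lemma~\ref{l:rx} together with the local modification provided by Theorem~\ref{t:tecnico}. First I would apply Lemma~\ref{l:rx} with $\rho$ replaced by $\rho$ and the role of the parameter $s$ unchanged: for $\Huno$-a.e.\ $x\in J_u\cap B_{2s\rho}$ there is a radius $r_x\in(0,(1-s)\rho)$ with $\Huno(J_u\cap\partial B_{r_x}(x))=0$ and $\eta r_x\le\Huno(J_u\cap B_{r_x}(x))\le\Huno(J_u\cap B_{2r_x}(x))<2\eta r_x$. The family $\{\overline B_{r_x}(x)\}$ is a fine cover of $J_u\cap B_{2s\rho}$ up to an $\Huno$-null set; by the Vitali covering theorem for the measure $\Huno\res J_u$ I extract a countable disjoint subfamily $\calF=\{B_i\}=\{\overline B_{r_i}(x_i)\}$ that still covers $J_u\cap B_{2s\rho}$ up to $\Huno$-null set. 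Disjointness gives item (ii); each $B_i\subset B_{(1+s)\rho}$ since $r_i<(1-s)\rho$ and $x_i\in B_{2s\rho}$ (one should check $2s\rho+(1-s)\rho=(1+s)\rho$). For item (i), disjointness and the lower density bound yield $\sum_i r_i\le\eta^{-1}\sum_i\Huno(J_u\cap B_i)\le\eta^{-1}\Huno(J_u\cap B_{2\rho})$, whence $\sum_i\Huno(\partial B_i)=2\pi\sum_i r_i$ and $\rho^{-1}\sum_i\Ln(B_i)=\pi\rho^{-1}\sum_i r_i^2\le\pi(1-s)\sum_i r_i$ are both controlled by $c\eta^{-1}\Huno(J_u\cap B_{2\rho})$.

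Next, on each ball $B_i=\overline B_{r_i}(x_i)$ I would invoke Theorem~\ref{t:tecnico} with $2r=2r_i$ and with $J:=J_u\cap B_{2r_i}(x_i)$; the smallness hypothesis $\Huno(J)<2r_i\eta$ is exactly the upper density bound from Lemma~\ref{l:rx}. This produces $R_i\in(r_i,2r_i)$ and a function $\phi_i(u)\in W^{1,p}(B_{R_i}(x_i),\Rdue)\cap SBD^p(B_{2r_i}(x_i))$ agreeing with $u$ on $B_{2r_i}(x_i)\setminus B_{R_i}(x_i)$, with $\Huno(J_{\phi_i(u)}\cap\partial B_{R_i}(x_i))=0$ and satisfying the estimates (tecnico2)--(tecnico5). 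A subtlety: the balls on which the modification actually happens are the $B_{R_i}(x_i)$, not the original $B_i$; since $R_i<2r_i$, these still lie in $B_{(1+s)\rho}$ provided one shrinks constants appropriately (one needs $2s\rho+2(1-s)\rho=(2-s)\rho$, so strictly speaking one should choose $r_x<(1-s)\rho/2$ in Lemma~\ref{l:rx}, or simply relabel the output balls of radius $R_i$ as the members of $\calF$ and note their radii are still $<(1-s)\rho$ up to halving the threshold — I would make this bookkeeping explicit). Because the $B_{R_i}(x_i)$ need not be disjoint, I must first pass to a further disjoint subfamily or, more simply, observe that the modifications have finite overlap; the cleanest route is to keep only those $B_i$ for which $B_{R_i}(x_i)$ meets $J_u$ essentially, and use that the enlarged balls $B_{2r_i}(x_i)$ have overlap bounded by a universal constant (a consequence of disjointness of the $B_i$ and $R_i<2r_i$), so the sum of the local $L^p$ bounds telescopes with a dimensional constant. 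Defining $w:=\phi_i(u)$ on each $B_{R_i}(x_i)$ and $w:=u$ elsewhere is consistent because on the overlap region $B_{2r_i}(x_i)\setminus B_{R_i}(x_i)$ one has $\phi_i(u)=u$; this gives items (iii), (v) (volume part), (vi) and (vii) directly from (tecnico2), (tecnico3) and (tecnico5).

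The remaining points are the global regularity (iv) and the Korn estimate \eqref{e:korn}. For (iv): away from $\cup_\calF B$ the function $w$ equals $u$, but $u$ restricted to $B_{2s\rho}\setminus\cup_\calF B$ has no jump set since $\calF$ covers $J_u\cap B_{2s\rho}$ up to $\Huno$-null set, and on each $B_{R_i}(x_i)$ we have $w\in W^{1,p}$ with matching traces on $\partial B_{R_i}(x_i)$ by (tecnico1) and (tecnico4); gluing a $W^{1,p}$ function inside a ball to a function with no jump outside, across an interface carrying no jump, produces a function with $\Huno(J_w\cap B_{2s\rho})=0$, hence $w\in W^{1,p}(B_{2s\rho},\Rdue)$, and $\Huno(J_w\setminus J_u)=0$ because the only jump of $w$ that could survive lies outside $B_{2s\rho}$ where $w=u$. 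The genuinely hard part is \eqref{e:korn}: once $w\in W^{1,p}(B_{2s\rho},\Rdue)$ one applies the classical Korn inequality on the ball $B_{2s\rho}$ to get a single skew-symmetric $A$ with $\|\nabla w-A\|_{L^p(B_{2s\rho})}\le c(p)\|e(w)\|_{L^p(B_{2s\rho})}$; then one splits $\|e(w)\|_{L^p(B_{2s\rho})}^p=\|e(w)\|_{L^p(B_{2s\rho}\setminus\cup_\calF B)}^p+\|e(w)\|_{L^p(\cup_\calF B)}^p$, uses $e(w)=e(u)$ on the first piece and \eqref{e:volume} on the second, so $\|e(w)\|_{L^p(B_{2s\rho})}\le c\|e(u)\|_{L^p(B_{2\rho})}$, and finally restricts the Korn estimate to $B_{2s\rho}\setminus\cup_\calF B$ where $\nabla w=\nabla u$. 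The main obstacle I anticipate is precisely the overlap/geometry bookkeeping — ensuring the enlarged balls $B_{R_i}(x_i)$ stay inside $B_{(1+s)\rho}$, have finite overlap, and that summing the local estimates of Theorem~\ref{t:tecnico} over the countable family does not lose control — and getting the single matrix $A$ in \eqref{e:korn} rather than a ball-dependent one, which is exactly what the $W^{1,p}$-regularity of $w$ on the whole of $B_{2s\rho}$ buys us.
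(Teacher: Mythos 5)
Your overall blueprint — cover $J_u\cap B_{2s\rho}$ by density-controlled balls via Lemma~\ref{l:rx}, apply Theorem~\ref{t:tecnico} in each one, paste, and then derive the Korn estimate from the $W^{1,p}$-regularity of the pasted function — is the correct one, and your treatment of item (iv) and of \eqref{e:korn} (apply the classical Korn inequality to $w\in W^{1,p}(B_{2s\rho},\R^2)$, split $\|e(w)\|_{L^p}$ across $\cup_\calF B$ and its complement, use $e(w)=e(u)$ off the balls and \eqref{e:volume} inside) matches the paper's. But the extraction step is wrong, and the error propagates.

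First, the Vitali-type covering theorem for a Radon measure that returns a \emph{single} countable disjoint subfamily covering $\mu$-a.e.\ requires a \emph{fine} cover (balls of arbitrarily small radius centered at, or containing, each point). Lemma~\ref{l:rx} does not produce a fine cover: it selects \emph{one} stopping radius $r_x$ per point, namely the largest dyadic radius where the lower density bound $\Huno(J\cap B_r(x))\ge \eta r$ still holds. For instance, if $J$ is a line segment and $\eta<1$, the lower density bound holds at \emph{all} small scales, so $r_x$ stays comparable to $\dist(x,\partial J)$, bounded below on compact subsets of the interior; the family is nowhere fine there. So you cannot invoke Vitali. The paper uses the \emph{Besicovitch} covering theorem, which needs only one ball per point but returns $\xi=\xi(n)$ disjoint subfamilies $\calF'_1,\dots,\calF'_\xi$ rather than one.

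Second, even if you somehow had a disjoint subfamily $\{\overline B_{r_i}(x_i)\}$, your pasting is not well defined and your overlap claim is false. Theorem~\ref{t:tecnico} has to be applied on $B_{2r_i}(x_i)$ (you cannot use $r=r_i/2$, because then the hypothesis $\Huno(J\cap B_{r_i}(x_i))< \eta r_i$ is violated by the \emph{lower} bound in Lemma~\ref{l:rx}), so the modification lives on $B_{R_i}(x_i)$ with $R_i\in(r_i,2r_i)$, and these balls overlap. On overlaps, $\phi_i(u)\ne\phi_j(u)$, so ``$w:=\phi_i(u)$ on $B_{R_i}(x_i)$'' is inconsistent. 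Moreover, disjointness of the $B_{r_i}(x_i)$ does \emph{not} give a universal bound on the overlap of the $B_{2r_i}(x_i)$: take $r_i=10^i$ and $x_i$ at distance $1.5\cdot 10^i$ from the origin along a ray; the $B_{r_i}(x_i)$ are pairwise disjoint, yet $0\in B_{2r_i}(x_i)$ for every $i$, so the overlap grows without bound across scales. So the local $L^p$ bounds do not telescope. The paper circumvents both problems at once: within each Besicovitch subfamily the modified balls $B^*\subset B_{2r_x}(x)$ are pairwise disjoint, so the pasting is consistent; across subfamilies the modification is applied \emph{iteratively}, replacing $u$ at stage $k$ by the already modified $w_{k-1}$, which is legitimate because $\Huno(J_{w_{k-1}}\setminus J_u)=0$ preserves the smallness hypothesis with the \emph{same} $J$, and hence the same $R$, in Theorem~\ref{t:tecnico}. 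The price is a factor $\xi$ in the constant. You do correctly flag the $(1-s)\rho$ vs.\ $(1-s)\rho/2$ bookkeeping for the radii of the output balls, which is a genuine (minor) point.
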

\begin{proof}
By Lemma~\ref{l:rx} we find a family $\calF^\prime$ of open balls covering $\Huno$-a.e. $J_u\cap B_{{2s\rho}}$ 
that satisfies \eqref{e:rx0} and \eqref{e:rx}. 
{ Setting $J=J_u$, to every $B\in\calF^\prime$ we associate
a new ball $B^*\subset B$ with the properties \ref{tecnico1}-\ref{tecnico5} of 
Theorem~\ref{t:tecnico}. Let $\calF^*$ be the family of the new balls $B^*$,
this is still a cover of $J$. Further, the balls $B^*$ can be taken to be closed.
By the Besicovitch covering theorem \cite[Theorem 2.17]{ambrosio} there are $\xi$} countable subfamilies 
$\calF^\prime_j=\{B_j^i\}_{i\in\N}$ of disjoint balls. Therefore, setting $\calF:=\cup_{j=1}^\xi \calF^\prime_j$ 
we have $\Huno\big((J_u\cap B_{2s\rho})\setminus \cup_{\calF}B\big)=0$. In addition, by \eqref{e:rx0} 
the first condition in item (ii) is satisfied as well, so that (ii) is established. 
Furthermore, 
\begin{align*}
\sum_{B\in\calF}\Huno(\partial B)= &2\pi\sum_{B\in\calF}{r_B}
\stackrel{\eqref{e:rx}}{\leq} \frac{2\pi}{\eta}\sum_{B\in\calF}\Huno\big(J_u\cap B\big)\\
\leq&\xi\,\frac{2\pi}{\eta}\Huno\big(J_u\cap \cup_{B\in\calF}B\big)\leq
\xi\,{\frac{2\pi}{\eta}}\Huno(J_u\cap B_{2\rho}).
\end{align*}
The volume estimate follows since $r_B\le \rho$ implies $\sum r_B^2\le \rho\sum r_B$. We remark that a 
quadratic volume estimate also follows by
$\sum r_B^2\le (\sum r_B)^2$.

Let $\phi(u)$ be the function given by Theorem \ref{t:tecnico} on the balls of the first family $\calF^\prime_1$ and define for every $h\in\N$ a function
\[
w_1^h:=\begin{cases}
\phi(u) & B_1^i, \,i\leq h \cr
u &  \mbox{otherwise}
\end{cases}
\]
such that $w_1^h\in SBD^p(B_{2\rho})$, $w_1^h\in W^{1,p}(\cup_{i\leq h}B_1^i;\Rdue)$ 
with $w_1^h=u$ $\Ln$-a.e. on $B_{2\rho}\setminus \cup_{i\leq h}B_1^i$ and $\Huno(J_{w_1^h}\setminus J_u)=0$.
In addition by item  \ref{tecnico2} in Theorem~\ref{t:tecnico}
\begin{multline}\label{e:stimavolwh}
\int_{B_{2\rho}}|e(w_1^h)|^p\,dx=\int_{\cup_{i\leq h}B_1^i}|e(\phi(u))|^p\,dx+
\int_{B_{2\rho}\setminus \cup_{i\leq h}B_1^i}|e(u)|^p\,dx\\
\leq {\tilde{c}}\int_{\cup_{i\leq h}B_1^i}|e(u)|^p\,dx+
\int_{B_{2\rho}\setminus \cup_{i\leq h}B_1^i}|e(u)|^p\,dx\leq(1+{\tilde{c}})\int_{B_{2\rho}}|e(u)|^p\,dx,
\end{multline}
and
\[
 |Ew_1^h|(B_{2\rho})\leq |Eu|\big(B_{2\rho}\setminus \cup_{i\leq h}B_1^i\big)
 +{\tilde{c}}\int_{\cup_{i\leq h}B_1^i}|e(u)|\,dx.
\]
Moreover, recalling that the $B_1^i$'s are disjoint and that $w_1^{h-1}=u$ on $B_1^h$, item  \ref{tecnico3} in 
Theorem~\ref{t:tecnico} gives
\[
 \|w_1^h-w_1^{h-1}\|_{L^1(B_{2\rho};\Rdue)}= \|w_1^h-u\|_{L^1(B_1^h;\Rdue)}\leq c\,\rho\,|Eu|(B_1^h),
\]
in turn implying that for all $h\geq k\geq 1$ 
\[
 \|w_1^h-w_1^k\|_{L^1(B_{2\rho};\Rdue)}\leq \sum_{i=k+1}^h\|w_1^i-w_1^{i-1}\|_{L^1(B_1^h;\Rdue)}
 \leq c\,\rho\,|Eu|\big(\cup_{k+1\leq i\leq h}B_1^i\big).
\]
Thus, $w_1^h\to w_1$ in $L^1(B_{2\rho};\Rdue)$ with
\[
w_1:=\begin{cases}
\phi(u) & \cup_{\calF_1^\prime}B \cr
u & \mbox{otherwise}.
\end{cases}
\]
The $BD$ compactness theorem then yields that $w_1\in BD(B_{2\rho})$. In turn, by \eqref{e:stimavolwh} 
and since $\Huno( J_{w_1^h}\setminus J_u)=0$, the $SBD$ compactness theorem implies that actually 
$w_1\in SBD^p(B_{2\rho})$ {(see also \cite[Theorem 11.3]{gbd}).}
Furthermore, since 
\[
 \Huno\big(J_{w_1^h}\cap\cup_{\calF_1^\prime}B\big)=\Huno(J_u\cap \cup_{i\geq h+1} B_1^i\big),
\]
we may conclude that 
\[
 \Huno\big(J_{w_1}\cap\cup_{\calF_1^\prime}B\big)\leq
 \liminf_h\Huno\big(J_{w_1^h}\cap\cup_{\calF_1^\prime}B\big)=0,
\]
and therefore $w_1\in W^{1,p}(\cup_{\calF_1^\prime}B,\Rdue)$. 
Finally, by construction $w_1=u$ $\Ln$-a.e. on $B_{2\rho}\setminus \cup_{\calF_1^\prime}B$ and 
$\Huno(J_{w_1}\setminus J_u)=0$.

By iterating the latter construction, for all $1< k\leq\xi$ and for every $h\in\N$ we find 
\[
w_k^h:=\begin{cases}
\phi(w_{k-1}) & B_k^i, \,i\leq h \cr
w_{k-1} & \mbox{otherwise}
\end{cases}
\]
such that $w_k^h\in SBD^p(B_{2\rho})$, $w_k^h\in W^{1,p}(\cup_{i\leq h}B_k^i;\Rdue)$,
$w_k^h=w_{k-1}$ $\Ln$-a.e. on $B_{2\rho}\setminus \cup_{i\leq h}B_k^i$ , 
$\Huno(J_{w_k^h}\setminus J_{w_{k-1}})=0$.
In addition, arguing as above, $w_k^h\to w_k$ in $L^1(B_{2\rho},\Rdue)$ with
\[
w_k:=\begin{cases}
\phi(w_{k-1}) & \cup_{\calF_k^\prime}B \cr
w_{k-1} & \mbox{otherwise},
\end{cases}
\]
$w_k\in SBD^p(B_{2\rho})$, $w_k\in W^{1,p}(\cup_{j\leq k}\cup_{\calF_j^\prime}B;\Rdue)$, 
$w_k=w_{k-1}$ $\Ln$-a.e. on $B_{2\rho}\setminus \cup_{\calF_k^\prime}B$ and 
$\Huno(J_{w_k}\setminus J_{w_{k-1}})=0$.

Set $w:=w_{\xi}$, then $w\in SBD^p(B_{2\rho})$, $w\in W^{1,p}(\cup_{\calF}B;\Rdue)$, 
$w=u$ $\Ln$-a.e. on $B_{2\rho}\setminus \cup_{\calF}B$, $\Huno(J_w\setminus J_u)=0$.
Iterating estimate \eqref{e:stimavolwh}, inequality \eqref{e:volume} follows at once with 
$c:=\max\{1+\tilde{c},2\pi\}\,\xi$, with $\tilde{c}$ the constant in Theorem~\ref{t:tecnico}. 
{Korn's inequality \eqref{e:korn} follows now immediately by (iii), (iv), and \eqref{e:volume}.} 

Finally, it is clear that also items (vi) and (vii) are satisfied in view of properties  \ref{tecnico3} and  \ref{tecnico5} 
in Theorem~\ref{t:tecnico}.
\end{proof}
\subsection{Korn's inequality in \texorpdfstring{$SBD^p$}{SBDp}}
\label{sec:korn}
\begin{proof}[Proof of Theorem \ref{theo:korn}]
By standard scaling and covering arguments it suffices to prove the assertion for a special Lipschitz
domain. Precisely, let $\varphi:\R\to\R$ Lipschitz with $\min \varphi[(-1,1)]=2$, and set
$U:=\{x: x_1\in (-2,2), x_2\in (-2,\varphi(x_1))\}$,
and $U^\mathrm{int}:=\{x: x_1\in (-1,1), x_2\in (-1,\varphi(x_1))\}$.
It suffices to show that for any $u\in SBD^p(U)$ there are $\omega$ with $\calH^1(\partial\omega)
+|\omega|^{1/2}\le
c \calH^1(J_u)$ 
and an affine function $a:\R^2\to\R^2$ such that $\|u-a\|_{L^p(U^\mathrm{int}\setminus\omega,\R^2)} 
+\|\nabla u-\nabla a\|_{L^p(U^\mathrm{int}\setminus\omega,\R^2)} 
\le c_{L,p} \|e(u)\|_{L^p(U,\R^{2\times 2})}$, with $c$ depending on $p$ and the Lipschitz constant $L$ of $\varphi$.
Obviously we can assume $\calH^1(J_u)$ to be small.

Let $q_j:=x_j+(-r_j/2,r_j/2)^2$ 
and $Q_j:=x_j+(-r_j,r_j)^2$, and assume that $u\in SBD^p(Q_j)$ obeys $\calH^1(J_u\cap Q_j)\le \eta r_j/8$.
By Proposition \ref{p:ricopr} with $\rho:=r_j/2$ and $s:=1/\sqrt2$ and Poincar\'e's inequality 
there are $\omega_j$ and $a_j$ affine with $r_j\calH^1(\partial\omega_j)+|\omega_j|^{1/2}\le c\calH^1(J_u\cap Q_j)$ 
and $r_j^{-1}\|u_j-a_j\|_{L^p(q_j\setminus\omega_j,\R^2)}+
\|\nabla u_j-\nabla a_j\|_{L^p(q_j\setminus\omega_j,\R^{2\times 2})}\le c_p \|e(u)\|_{L^p(Q_j,\R^{2\times 2})}$, with a constant
which depends only on the exponent $p$.

To pass to the estimate on $U^\mathrm{int}$ one uses a Whitney cover with pairs of open cubes
 $q_j$ and $Q_j$ such that the exterior ones have finite overlap and the interior ones form a cover, 
 as done for example in proving the nonlinear Korn's inequality in \cite[Theorem 3.1]{FrieseckeJamesMueller2002}.
Following \cite{friedrich},
if $\calH^1(J_u\cap Q_j)\ge \eta r_j/8$ we define $P_j:=x_j+(-r_j,r_j)\times(-r_j,\infty) \cap U$,
otherwise $P_j=\emptyset$ and $\omega_j$, $a_j$ are obtained as above.
Notice that $\calH^1(P_j)\le c_L r_j$ by the properties of Lipschitz functions and of the Whitney covering.

Then it suffices to apply the weighted Poincar\'e inequality, as done in \cite[Theorem 3.1]{FrieseckeJamesMueller2002}
and  \cite[Theorem 4.2]{friedrich}. By the properties of the covering, for neighboring pairs of squares
$|q_j\cap q_i|\ge c r_i^2$, and if $\eta$ is not too large this gives a bound on the difference of the two affine functions.
One then defines $a^*\in C^\infty(U,\R^{2})$ 
using a partition of unit subordinated to the cover $\{q_j\}$, and obtains 
\begin{equation*}
 \int_{U^\mathrm{int}\setminus \cup_j P_j} (\varphi(x_1)-x_2)^p |D^2a^*|^p(x) dx\le c_{L,p} \|e(u)\|_{L^p(U,\R^2)}^p\,.
\end{equation*}
Since the cube $Q_0=(-2,2)^2$ was not removed one has $a^*=a_0$ in $q_0$ and application of the one-dimensional weighted Poincar\'e
inequality to $a^*(x_1,\cdot)$ leads to the assertion, with $\omega:=\cup_j (P_j\cup \omega_j)$ and $a:=a_0$. Equivalently, in the last step one may use
a Poincar\'e or Korn inequality on John domains, as done in  \cite[Theorem 4.2]{friedrich}.
\end{proof}
We remark that the nonoptimality of the exponent in \cite[Theorem 4.2]{friedrich} is only consequence
of the nonoptimal local estimate employed there (see \cite[Theorem 3.1]{friedrich}).

\subsection{{Reflection}}
\label{sec:approxinterface}
{In this subsection we establish a technical result instrumental for the identification 
of the surface energy density in Section~\ref{s:surface}. To this aim, given $u\in SBD^p(\Omega)$ and a 
point $x_0\in J_u$ we set 
\begin{equation}\label{e:uxzero}
 u_{x_0}(x):=\begin{cases}
              u^+(x_0) & \text{ if } \langle x-x_0,\nu_{x_0}\rangle >0,\\
              u^-(x_0) & \text{ if } \langle x-x_0,\nu_{x_0}\rangle <0.
             \end{cases}
\end{equation}
}
\begin{lemma}\label{lem:refl}
{Let $p\in(1,\infty)$}, $u\in SBD^p(\Omega)$, $\Omega\subset\R^2$ open. For $\calH^1$-a.e. $x_0\in J_u$ and any $\rho>0$ 
  sufficiently small there is $v_\rho\in SBD^p(B_{2\rho}(x_0))\cap SBV^p{(B_{\rho}(x_0),\R^2)}$ such that:
  \begin{enumerate}
   \item $\displaystyle\lim_{\rho\to0} \frac1\rho \calH^1(J_{v_\rho}\setminus J_u \cap B_{\rho}(x_0)) =0$;
   \item $\displaystyle\lim_{\rho\to0} {\frac1\rho \int_{B_{\rho}(x_0)} |\nabla v_\rho|^p dx =0}$;
   \item $\displaystyle\lim_{\rho\to0} \frac1{\rho^2} \calL^2(\{x\in B_{\rho}(x_0):\,u\ne v_\rho\}) =0$;
   \item $\displaystyle\lim_{\rho\to0} \frac1{\rho^2} \int_{B_{\rho}(x_0)} |v_\rho-u| dx =0$;
   \item $\displaystyle\lim_{\rho\to0} \frac1{\rho^{p+1}} \int_{B_{\rho}(x_0)} |v_\rho-u_{x_0}|^p dx =0$; 
   \item $\displaystyle\lim_{\rho\to0} \frac1{\rho} \int_{B_{\rho}(x_0)\cap{J_{v_\rho}}} |[v_\rho]-[u]| d\calH^1 =0$.
     \end{enumerate}
\end{lemma}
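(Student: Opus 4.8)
The plan is to produce $v_\rho$ by a reflection (even extension) argument applied to a rescaled blow-up of $u$ at a good point $x_0\in J_u$, after first regularizing $u$ away from $J_u$ using Proposition~\ref{p:ricopr}. Fix a point $x_0\in J_u$ which is simultaneously a point of approximate continuity from both sides with values $u^\pm(x_0)$, a Lebesgue point of $e(u)$, a point where the blow-up of $J_u$ converges to the hyperplane $\nu_{x_0}^\perp$, and where the standard density estimates for $J_u$ hold; $\calH^1$-a.e.\ point of $J_u$ has all these properties (using that $SBD^p\subset SBD$ and the structure theory of $BD$ from \cite{AmbrosioCosciaDalmaso1997,BellettiniCosciaDalmaso1998}). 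After a rigid change of coordinates assume $x_0=0$ and $\nu_{x_0}=e_2$, so that $u_{x_0}$ takes the value $u^+(0)$ in the upper half-ball and $u^-(0)$ in the lower one. First I would apply Proposition~\ref{p:ricopr} on balls $B_{2\rho}$ (legitimate once $\rho$ is small, since $\calH^1(J_u\cap B_{2\rho})/\rho\to0$ at a point of density of $J_u$ relative to the line — here one must be slightly careful and instead use that for $\calH^1$-a.e.\ $x_0$ the \emph{lower} density quotient is controlled, or pass to a subsequence of radii) to obtain a $W^{1,p}$ field $w_\rho$ agreeing with $u$ outside a small union of balls $\calF_\rho$ with $\rho^{-1}\sum_{\calF_\rho}\calH^1(\partial B)+\rho^{-1}|\cup_{\calF_\rho}B|^{1/2}\to0$, with $\int|e(w_\rho)|^p$ controlled by $\int_{B_{2\rho}}|e(u)|^p$, and with $\|u-w_\rho\|_{L^1}\le c\,\rho\,|Eu|(\cup_\calF B)$.

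The core construction then proceeds on the upper and lower half-balls separately: let $w_\rho^\pm$ denote $w_\rho$ restricted to $B_\rho^\pm:=B_\rho\cap\{\pm x_2>0\}$, and define $v_\rho$ on $B_\rho$ by keeping $w_\rho$ on, say, the upper half and replacing the lower half by the reflection of the upper half across $\{x_2=0\}$, i.e.\ $v_\rho(x',x_2):=w_\rho(x',-x_2)$ for $x_2<0$ (with the appropriate sign change on the second component so that $e(v_\rho)$ stays in $L^p$ — reflecting a displacement field, one uses $v(x',x_2)=(w_1(x',-x_2),-w_2(x',-x_2))$, whose symmetrized gradient is the corresponding reflection of $e(w)$). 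Outside $B_\rho$, on the annulus $B_{2\rho}\setminus B_\rho$, I would set $v_\rho:=u$, so that $v_\rho\in SBD^p(B_{2\rho})$ with a new (at most) jump contribution on $\partial B_\rho$ and on the reflected copy of $J_{w_\rho}\cap B_\rho^+$, while $v_\rho\in SBV^p(B_\rho,\R^2)$ since $w_\rho$ is $W^{1,p}$ on each half-ball minus $\cup_{\calF_\rho}B$ and the reflection only doubles that negligible-measure bad set. This choice forces properties (ii) and (i): $\int_{B_\rho}|\nabla v_\rho|^p=2\int_{B_\rho^+}|\nabla w_\rho|^p$, and the $W^{1,p}$ bound from Proposition~\ref{p:ricopr} combined with the Lebesgue-point property of $e(u)$ at $0$ (Korn's inequality \eqref{e:korn} applied on $B_{2\rho}$ gives $\int_{B_{2\rho}}|\nabla u-A|^p\le c\int_{B_{2\rho}}|e(u)|^p=o(\rho^2)$, and then $\nabla v_\rho$ is close to the reflected constant $A$) yields $\rho^{-1}\int_{B_\rho}|\nabla v_\rho|^p\to0$; for (i), $J_{v_\rho}\setminus J_u\cap B_\rho$ is contained in the reflection of $J_{w_\rho}\cap B_\rho^+$ together with the part of $\partial B_\rho$ hit — but we choose the radius $\rho$ (via the freedom in Proposition~\ref{p:ricopr}, whose balls have radii strictly less than $(1-s)\rho$, and by the coarea/Vitali trick already used in Theorem~\ref{t:tecnico}) so that $\calH^1(J_u\cap\partial B_\rho)=0$ and the reflected jump set has length $\le \calH^1(J_{w_\rho}\setminus J_u\cap B_\rho^+)\le \calH^1(J_u\cap B_\rho)+\sum_{\calF_\rho}\calH^1(\partial B)=o(\rho)$, using item (iv) of Proposition~\ref{p:ricopr}.

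For the remaining quantitative statements: (iii) follows because $\{u\ne v_\rho\}\cap B_\rho\subset (\cup_{\calF_\rho}B)\cup(\text{reflection of }\cup_{\calF_\rho}B)$ up to the set where $u$ itself jumps, and $|\cup_{\calF_\rho}B|=o(\rho^2)$ by Proposition~\ref{p:ricopr}(i); (iv) then follows from (iii) together with $L^\infty$ or $L^1$ control — here one uses $\|v_\rho-u\|_{L^1(B_\rho)}\le \|v_\rho-w_\rho\|_{L^1}+\|w_\rho-u\|_{L^1}$, the second term being $\le c\rho|Eu|(\cup_{\calF_\rho}B)=o(\rho^2)$ by item (vi), and the first being an $L^1$ estimate on the reflection, bounded via the $L^p$-Korn estimate and Hölder by $(\text{meas of bad set})^{1-1/p}\|\nabla v_\rho-A\|_{L^p}\cdot\rho=o(\rho^2)$; (v) is the key estimate and follows by writing $v_\rho-u_{x_0}$ on each half-ball as $(v_\rho-a^\pm)+(a^\pm-u^\pm(0))$ where $a^\pm(x)=A(x-0)+u^\pm(0)$ is the affine field matching the half-ball trace, using the Korn bound $\int_{B_\rho^\pm}|v_\rho-a^\pm|^p\le c\rho^p\int_{B_{2\rho}}|e(u)|^p=o(\rho^{p+2})$ — wait, more precisely one needs $o(\rho^{p+1})$, which is even weaker, so Poincaré–Korn plus the Lebesgue point property of $e(u)$ suffices — together with the fact that $a^\pm(0)=u^\pm(0)$ exactly, so the constant part is controlled by $\rho^p\cdot|A|^p\cdot\rho^2$ which is $o(\rho^{p+1})$ provided $|A|^p\rho=o(1)$; but $|A|$ need not be small, so instead one absorbs $a^\pm-u^\pm(0)$ into the linear part and uses that $\int_{B_\rho}|v_\rho-u_{x_0}|^p\le c\int_{B_\rho}|v_\rho-u|^p+c\int_{B_\rho}|u-u_{x_0}|^p$, the last integral being $o(\rho^{p+1})$ by the definition of $x_0$ as an $L^p$-approximate jump point (this is the analogue for $SBD$ of the $SBV$ fact that a.e.\ jump point is an $L^p$-jump point, which holds since $u\in SBD^p\subset BV_{loc}$-like structure, cf.\ \cite{BellettiniCosciaDalmaso1998}), and $\int_{B_\rho}|v_\rho-u|^p$ is handled as in (iii)–(iv); finally (vi) follows since $J_{v_\rho}\cap B_\rho$ consists of the reflected part of $J_u\cap B_\rho^+$ (on which $[v_\rho]$ is the reflection of $[u]$, so $|[v_\rho]-[u]|$ integrated is $\le 2\int_{J_u\cap B_\rho}|[u]|\,d\calH^1$, which is $o(\rho)$ because $\int_{J_u\cap B_\rho}|[u]|d\calH^1=o(\rho)$ at a jump point where the amplitude has a finite approximate limit and the density of $J_u$ relative to the line is finite — more carefully, it is $\le (|[u^+(0)-u^-(0)]|+o(1))\cdot\calH^1(J_u\cap B_\rho)=O(\rho)$, and to get $o(\rho)$ one restricts to the reflected copy of $J_{w_\rho}\cap B_\rho^+\setminus J_u$ which already has length $o(\rho)$ on which the jump is small, plus the genuine part of $J_u$ in $B_\rho^+$ reflected, where $[v_\rho]$ equals the reflected $[u]$ up to the sign convention — here one must check the sign bookkeeping so that the reflected jump matches, which it does precisely because $\nu_{x_0}=e_2$ is the reflection axis).

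\textbf{Main obstacle.} The delicate point is item (v), the $\rho^{-(p+1)}$-rate for $\|v_\rho-u_{x_0}\|_{L^p(B_\rho)}^p$: one has no Poincaré inequality in $SBD^p$ (this is exactly the difficulty the whole paper circumvents), so the estimate must go through the $W^{1,p}$ surrogate $w_\rho$ and Korn's inequality \eqref{e:korn}, and one must verify that the affine comparison maps on the two half-balls can both be taken with the \emph{same} skew-symmetric gradient $A$ and with constant terms exactly $u^\pm(0)$; reconciling "the Korn map $a$" (whose constant is whatever Poincaré gives) with "the true one-sided traces $u^\pm(0)$" requires the $L^p$-approximate-jump-point property of $x_0$, and getting that the two are compatible to order $o(\rho^{1+1/p})$ in $L^p$ rather than merely $o(\rho^{1})$ is where the real work lies. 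A secondary technical nuisance is choosing the radius $\rho$ (or a full-measure set of radii) so that simultaneously $\calH^1(J_u\cap\partial B_\rho)=0$, the hypothesis $\calH^1(J_u\cap B_{2\rho})\le\eta\,(1-s)\rho$ of Proposition~\ref{p:ricopr} holds, and the blow-up estimates at $x_0$ are realized — but this is routine given the coarea argument already deployed in the proof of Theorem~\ref{t:tecnico}.
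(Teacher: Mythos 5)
There is a genuine gap, and in fact the proposal inverts the paper's argument in a way that breaks two of the key steps.

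\textbf{The order of operations is backwards.} You propose to apply Proposition~\ref{p:ricopr} directly to $u$ on $B_{2\rho}(x_0)$ at a jump point $x_0\in J_u$, asserting that ``$\calH^1(J_u\cap B_{2\rho})/\rho\to 0$ at a point of density of $J_u$''. This is false: at $\calH^1$-a.e.~$x_0\in J_u$ the jump set has density one with respect to $\calH^1$ on the approximate tangent line, so $\calH^1(J_u\cap B_{2\rho}(x_0))/(4\rho)\to 1$, and the hypothesis $\calH^1(J_u\cap B_{2\rho})<\eta(1-s)\rho$ of Proposition~\ref{p:ricopr} simply never holds for $\rho$ small. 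The whole point of the reflection in the paper is to kill the dominant jump set \emph{before} invoking Proposition~\ref{p:ricopr}: one reflects $u$ across the two lines $\{x_2=\pm\tau_\rho\}$ sitting just outside the graph of $\Gamma$, obtaining $\tilde u^\pm_\rho\in SBD^p(B_{2\rho})$ for which $\calH^1(J_{\tilde u^\pm_\rho}\cap B_{2\rho})=o(\rho)$ (estimate (\ref{eq:Jutilde}), a consequence of (\ref{eq:diffsimm}) and $\tau_\rho/\rho\to0$), and only then applies Proposition~\ref{p:ricopr} to each $\tilde u^\pm_\rho$.

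\textbf{The reflection discards the wrong half-ball.} You keep $w_\rho$ in $B_\rho^+$ and replace $B_\rho^-$ by the (Nitsche) reflection of $w_\rho|_{B_\rho^+}$ across $\{x_2=0\}$. The resulting $v_\rho$ is then, on $B_\rho^-$, an approximation of the \emph{reflection} of $u^+(x_0)$ rather than of $u^-(x_0)$, so property (v) (and also the jump comparison (vi)) cannot hold unless $u^-(x_0)$ happens to equal the reflected $u^+(x_0)$. In the paper each $\tilde u^\pm_\rho$ retains the original values of $u$ on its own side of $\Gamma$ (up to the thin strip $|x_2|<\tau_\rho$ filled by the reflection), and the final $v_\rho$ glues the regularization $w_\rho^+$ on $\{x_2>h(x_1)\}$ with the regularization $w_\rho^-$ on $\{x_2<h(x_1)\}$, so both one-sided traces are preserved.

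\textbf{Item (v) cannot be obtained from an ``$L^p$-approximate jump point'' assertion.} You invoke ``$\int_{B_\rho}|u-u_{x_0}|^p\,dx=o(\rho^{p+1})$ by the definition of $x_0$ as an $L^p$-approximate jump point''. For $BD$/$SBD$ functions the structure theorem of \cite{AmbrosioCosciaDalmaso1997} only gives $L^1$-type one-sided blow-up convergence (this is exactly (\ref{e:choice3}) in the paper); an $L^p$ version with $p>1$ is not available and is precisely what must be earned. The paper does this by passing through the $W^{1,p}$ regularizations $w^\pm_\rho$, using Poincar\'e--Korn to compare against an affine map $a_\rho$ on each side, and then running a dyadic iteration on the coefficients $d_\rho,\beta_\rho$ of $a_\rho$ (estimates (\ref{e:drho})--(\ref{e:ax0})) to upgrade the rate. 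This is the technical heart of the lemma; your proposal leaves it as an ``obstacle'' and proposes a shortcut ($L^p$ jump-point property) that is not available in $SBD^p$.
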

\begin{proof}
Since $J_u$ is $(\calH^1,1)$ rectifiable, there exists a sequence $(\Gamma_i)_{i=1}^\infty$ of $C^1$ curves such that $\calH^1(J_u\setminus \cup_{i=1}^\infty\Gamma_i)=0$.
For $\calH^1$-a.e. $x_0\in J_u$ we have
\bes{
&\displaystyle{\lim_{\rho\to 0}\frac1{2\rho}{\int_{J_u\cap B_{\rho}(x_0)}(|[u]|+1)d\calH^1}=|[u](x_0)|+1},\\
&\displaystyle{\lim_{\rho\to 0}\frac1{2\rho}{\int_{J_u\cap\Gamma\cap B_{\rho}(x_0)}(|[u]|+1)d\calH^1}=|[u](x_0)|+1},}
for one of the aforementioned curves $\Gamma$.
Therefore
\be{\label{eq:diffsimm}\displaystyle{\lim_{\rho\to 0}\frac1{2\rho}}{\int_{(J_u\triangle\Gamma)\cap B_{\rho}(x_0)}(|[u]|+1)d\calH^1}=0}
and for $\rho$ small $\Gamma$ separates $B_{6\rho}(x_0)$ into two connected components. 
It is not restrictive to assume  that {$\Gamma\cap B_{6\rho}(x_0)$} is the graph of 
a function $h\in C^1(\R)$.
Moreover the following properties hold $\calH^1$-a.e. $x_0\in J_u$ 
\ba{
&\displaystyle\lim_{\rho\to0} \frac1\rho \int_{B_\rho(x_0)} |e(u)|^p dx =0,\label{e:choice1}\\
&\displaystyle\lim_{\rho\to0} \frac1{2\rho}|Eu|(B_\rho(x_0))=|[u]\odot\nu_u|(x_0),\label{e:choice2}\\
&\displaystyle\lim_{\rho\to0} \frac1{\rho^2} \int_{B_\rho(x_0)\cap\{\pm(x_2-h(x_1))>0\}}|u-u^\pm(x_0)|dx=0.\label{e:choice3}
}

For simplicity we next assume that the point $x_0=0$ satisfies all the previous properties \eqref{eq:diffsimm}-\eqref{e:choice3}, 
{with $h(0)=h^\prime(0)=0$}. We also set 
$\tau_\rho:=\|h\|_{L^\infty(B_{6\rho})}$ and note that $\tau_\rho/\rho\to 0$ as $\rho\to0$. We now define the reflections of $u$ with respect to the lines $\{x_2=\pm\tau_\rho\}$,
in the sense of \cite[Lemma 1]{nitsche}. {More precisely, define $\tilde u^+_\rho$ 
on the set ${B_{2\rho}}\cap\{x_2<\tau_\rho\}$ by}
\[
\begin{cases}
(\tilde u^+_\rho)_1(x_1,x_2):= -2 u_1(x_1,3\tau_\rho-2x_2)+3 u_1(x_1,2\tau_\rho-x_2)\\
(\tilde u^+_\rho)_2(x_1,x_2):=  4 u_2(x_1,3\tau_\rho-2x_2)-3 u_2(x_1,2\tau_\rho-x_2)
\end{cases}
\]
and by $u$ otherwise in ${B_{2\rho}}$. Note that $\tilde u^+_\rho\in SBD^p({B_{2\rho}})$ and that 
\ba{& {\displaystyle\lim_{\rho\to0}\frac1{2\rho}\calH^1(J_{\tilde u^+_\rho}\cap B_{2\rho})=0,}\label{eq:Jutilde}\\
&\|e(\tilde u^+_\rho)\|_{L^p(B_{2\rho},\R^{2{\times}2})}\leq c \|e(u)\|_{L^p(B_{6\rho},\R^{2{\times}2})},\label{eq:eutilde}} 
for a universal constant $c$.
Using a similar reflection we define $\tilde u^-_\rho$ in $B_{2\rho}\cap\{(x_1,x_2):x_2>-\tau_\rho\}$ and we set $\tilde u^-_\rho:=u$ otherwise in $B_{2\rho}$.

By \eqref{eq:diffsimm} and \eqref{eq:Jutilde} for $\rho$ small we have that $\tilde u^\pm_\rho$ satisfy the 
hypotheses of Proposition \ref{p:ricopr} on $B_{2\rho}$ with $s=1/2$. 
Thus, there exist $w^\pm_\rho\in SBD^p(B_{2\rho})\cap W^{1,p}(B_\rho,\R^2)$, 
for which properties (i)-(vii) hold true. Finally let us define $v_{\rho}\in SBD^p(B_{2\rho})$ by
\[
 v_{\rho}:=\begin{cases}
        w^+_\rho & \textrm{in $B_{2\rho}\cap\{x_2>h(x_1)\}$,}\\
        w^-_\rho & \textrm{in $B_{2\rho}\cap\{x_2<h(x_1)\}$.}
        \end{cases}
\]
Since 
$w^\pm_\rho\in W^{1,p}(B_{\rho},\R^2)$ we obtain $v_\rho\in SBV^p(B_\rho,\R^2)$ with
\begin{align*}
Dv_\rho\res B_{\rho}=&
         \nabla w^+_\rho\,\Ln\res B_{\rho}\cap\{x_2>h(x_1)\} +
         (w^+_\rho-w^-_\rho)\otimes\nu_\Gamma\Huno\res\Gamma\cap B_{\rho}\\
         &+
         \nabla w^-_\rho\,\Ln\res B_{\rho}\cap\{x_2<h(x_1)\}.
\end{align*}
We next check that $v_\rho$ satisfies the properties in the statement in the ball $B_\rho$. 
Property (i) comes straightforwardly from \eqref{eq:diffsimm} and from the fact that 
$J_{v_\rho}\subset \Gamma$. Moreover \eqref{eq:eutilde}, \eqref{e:volume}, and \eqref{e:choice1} yield 
\begin{equation}\label{e:evrho}
 \lim_{\rho\to 0} \frac{1}{\rho}\int_{B_{\rho}}|e(w_\rho^\pm)|^pdx=0.
\end{equation} 
As for property (iii), we observe that
\[
\lim_{\rho\to 0}\frac1{\rho^2} \calL^2(\{B_\rho:\,u\ne v_\rho\})\leq 
\lim_{\rho\to 0}(c\frac{\tau_\rho}{\rho}+
\frac c\rho\calH^1((J_u\setminus \Gamma)\cap B_{6\rho}))=0,
\]
where we have used Proposition~\ref{p:ricopr} (i) and \eqref{eq:diffsimm}.

Let us now prove property (iv). By the definition of $v_{\rho}$ and $\tilde u^\pm_\rho$ and by triangular inequality 
we obtain
\bm{\label{e:tr}
\frac1{\rho^2} \int_{B_{\rho}}|v_\rho-u| dx \leq\\
\frac1{\rho^2} \int_{B_{\rho}\cap \{h(x_1)<x_2\}}|w^+_\rho-\tilde u^+_\rho| dx + 
\frac1{\rho^2} \int_{B_{\rho}\cap \{h(x_1)<x_2<\tau_\rho\}}|\tilde u^+_\rho-u| dx+\\
\frac1{\rho^2} \int_{B_{\rho}\cap\{x_2<h(x_1)\}}|w^-_\rho-\tilde u^-_\rho| dx +
\frac1{\rho^2} \int_{B_{\rho}\cap\{-\tau_\rho<x_2<h(x_1)\}}|\tilde u^-_\rho-u| dx.
}
By the definition of $w^+_\rho$ and Proposition \ref{p:ricopr} (vi) we can estimate
\begin{equation*}
\frac1{\rho^2} \int_{B_{\rho}}|w^+_\rho-\tilde u^+_\rho| dx\leq
{\frac {c}{\rho} |E\tilde u_\rho^+|(B_{2\rho}) \le \frac{c}{\rho} |Eu|(B_{6\rho}\setminus\Gamma).
}
\end{equation*}
By \eqref{eq:diffsimm} and (\ref{e:choice1}) we conclude 
that the first term of \eqref{e:tr} tends to $0$. {Clearly, the same argument can be applied to the 
third term there. So, it remains to treat the second term in \eqref{e:tr}, being the fourth one similar.} 
By triangular inequality and a change of variable we infer
\bm{
\frac1{\rho^2}\int_{{B_{\rho}}\cap \{h(x_1)<x_2<\tau_\rho\}}|\tilde u^+_\rho-u| dx\leq\nonumber\\
\frac1{\rho^2}\int_{B_{\rho}}|\tilde u^+_\rho-{u^+(x_0)}| dx+
\frac1{\rho^2}\int_{B_{\rho}\cap \{h(x_1)<x_2\}}|{u^+(x_0)}-u| dx\leq\\
\frac c{\rho^2}\int_{B_{6\rho}\cap \{h(x_1)<x_2\}}|{u^+(x_0)}-u| dx,
}
and the last term tends to $0$ by \eqref{e:choice3}, hence property (iv) follows.

Let us prove now property (v). 
By Korn's inequality and Poincar\'e's inequality in $W^{1,p}$, 
there exists an affine function
$a_\rho(x):=d_\rho+\beta_\rho x$  such that
\begin{equation}\label{e:ppoi}
 \frac1{\rho^{p+1}} \int_{B_{\rho}} |w^+_\rho-a_\rho|^p dx \leq \frac c{\rho}\int_{B_{\rho}}|e(w^+_\rho)|^pdx.
\end{equation}
{We first claim that 
\begin{equation}\label{e:drho}
 \lim_{\rho\to0}d_{\rho}=u^+(x_0).
\end{equation}
{Let $\omega^+_\rho:=B_\rho\cap \{u= w_\rho^+\}\cap\{x_2>h(x_1)\}$. 
Since $|\omega^+_\rho|/\rho^2\to \pi/2$, and $a_\rho$ is affine,
by \cite[Lemma 4.3]{ContiFocardiIurlano2016} we obtain, for $\rho$ small,}
\[
\|a_\rho-u^+(x_0)\|_{L^\infty({B^+_{\rho}},\R^2)}
\leq \frac c{\rho^2}\int_{\omega_\rho^+}|w^+_\rho-a_\rho|dx+\frac c{\rho^2}\int_{\omega_\rho^+}|u-u^+(x_0)|dx.
\] 
The right hand side above is infinitesimal by \eqref{e:ppoi}, {(\ref{e:evrho})} and \eqref{e:choice3}, thus we conclude
\[
 \limsup_{\rho\to0}|d_\rho-u^+(x_0)|\leq
 \lim_{\rho\to0}\|a_\rho-u^+(x_0)\|_{L^\infty(B^+_{\rho},\R^2)}=0,
\]
which proves (\ref{e:drho}).

Next we prove that}
\ba{\label{e:betax0}
&\displaystyle{\lim_{\rho\to0}\rho|\beta_\rho|^p=0,}\\
&\displaystyle{{\lim_{\rho\to0}{\rho^{\frac{1-p}{p}}}{|d_\rho- u^+(x_0)|}=0}}.\label{e:ax0}
}
To establish \eqref{e:betax0}, we fix $\delta>0$ small and we consider $\hat{\rho}$ such that
\be{\label{e:delta}
\Big(\frac1\rho\int_{B_{\rho}}|e(w^+_\rho)|^pdx\Big)^{\frac1p}<\delta, \qquad\textrm{for $\rho\leq\hat{\rho}$,}
}
note that this is possible by \eqref{e:evrho}. For $\rho<\hat{\rho}$ we define $\rho_k:=(2^k\rho)\wedge\hat{\rho}$ and we adopt the notation 
$k$ in place of $\rho_k$ for the subscriptions.
As above, using \cite[Lemma 4.3]{ContiFocardiIurlano2016} and the triangular inequality we infer
\bm{\nonumber
\|a_k-a_{k+1}\|_{L^\infty({B^+_{\rho_k}},\R^2)}
\leq\\ \frac c{{\rho_k}^2}\int_{\{u=w^+_{k}\}}|w^+_{k}-a_{k}|dx+\frac c{{\rho^2_{k+1}}}\int_{\{u=w^+_{{k+1}}\}}|w^+_{{k+1}}-a_{{k+1}}|dx
\leq c\delta\rho_k^{\frac{p-1}p},
} 
where the last estimate follows by H\"older's inequality, \eqref{e:ppoi}, and \eqref{e:delta}.
Therefore 
\begin{equation}\label{dadkdk1}
|d_k-d_{k+1}|\leq \|a_k-a_{k+1}\|_{L^\infty({{B^+_{\rho_k}}},\R^2)}\leq c\delta\rho_k^{\frac{p-1}p},
\end{equation}
and hence once more by \cite[Lemma 4.3]{ContiFocardiIurlano2016} and by the triangular inequality we conclude
\begin{equation*}
|\beta_k-\beta_{k+1}|\leq c\delta\rho_k^{-\frac{1}p}.
\end{equation*}
Collecting these estimates as $k$ varies we obtain
\[
\rho|\beta_\rho|^p\leq \rho\Big(|\hat\beta|+\sum_{k=0}^{\hat{k}-1}|\beta_k-\beta_{k+1}|\Big)^p\leq c\delta^p+c\rho|\hat{\beta}|^p,
\]
where $\hat{k}$ is the first index 
such that $\rho_{\hat{k}}=\hat{\rho}$ and $\hat{\beta}:=\beta_{\hat{k}}=\beta_{\hat{\rho}}$. This proves \eqref{e:betax0}
as $\rho\to0$ and $\delta\to0$.

We next prove \eqref{e:ax0}. 
{Similarly to the previous estimate, summing (\ref{dadkdk1}) gives 
\begin{equation*}
 |d_\rho -d_{\hat\rho}| \le c \delta \, \hat\rho^{(p-1)/p}
\end{equation*}
for all $0<\rho<\hat\rho\le \rho_\delta$, with $\delta$ arbitrary and $\rho_\delta$ depending only on $\delta$. 
Taking $\rho\to0$ and using (\ref{e:drho}) yields
\begin{equation*}
 \hat\rho^{(1-p)/p} |u^+(x_0) -d_{\hat\rho}| \le c \delta 
\end{equation*}
which, since $\delta$ was arbitrary, proves (\ref{e:ax0}) and therefore (v).}

{
{ At this point we turn to property (ii). }
Korn's inequality implies that
\begin{multline*}
\|\nabla w_\rho^+\|_{L^p(B_{\rho},\R^{2{\times}2})}\leq
  \|\nabla w_\rho^+-\beta_\rho\|_{L^p(B_{\rho},\R^{2{\times}2})}+c\, \rho^{\sfrac2p}|\beta_\rho|\\
  \leq c\,\|e(w_\rho^+)\|_{L^p(B_{\rho},\R^{2{\times}2})}+ c\,\rho^{\sfrac2p}|\beta_\rho|,
\end{multline*}
where $c>0$ is a universal constant. 
{This, together with \eqref{e:evrho} and \eqref{e:betax0}  and
the corresponding estimates for $w_\rho^-$, implies property (ii).}
}

We finally show property (vi). Note that by the trace theorem we have
\bm{
\frac1{\rho}\int_{\Gamma\cap B_\rho}|v^\pm_\rho-u^\pm|d\calH^1\leq \nonumber\\
\frac c{\rho^2}\int_{B_\rho}|v_\rho-u|dx +
\frac c\rho|E(v_\rho-u)|(B_\rho\setminus\Gamma)\leq\\
\frac c{\rho^2}\int_{B_\rho}|v_\rho-u|dx +\frac c\rho\int_{B_\rho}|e(v_\rho)|dx +\frac c{\rho}\int_{B_\rho}|e(u)|dx 
+\frac c\rho\int_{J_u\setminus\Gamma}|[u]|d\calH^1
}
and all terms in the last expression approach $0$ respectively by (iv), \eqref{e:choice1}, \eqref{e:evrho} and \eqref{eq:diffsimm}.

\end{proof}

\section{Integral representation}\label{s:intrep1}
\subsection{Preliminaries}
In this Section we prove Theorem \ref{theorepr}, along the lines of \cite[Section 2.2]{bou-fon-leo-masc}.

Before starting we specify that property \ref{theoreprlsc} means that if $u_j, u\in SBD^p(\Omega)$ obey $u_j\to u$ in $L^1(\Omega,\R^2)$, then
$F(u,A)\le\liminf_{j\to\infty} F(u_j,A)$ for any open set $A$. Property \ref{theoreprlocal}
means that if $u,v\in SBD^p(\Omega)$ obey $u=v$ $\calL^2$-a.e. in $A$, then $F(u,A)=F(v,A)$.
The functions $f$ and $g$ are defined in (\ref{eqdeff}) and (\ref{eqdefg}) below. 

The family of balls contained in $\Omega$ is denoted by
\begin{equation*}
 \calA^*(\Omega):=\left\{ B_\eps(x): x\in\Omega, \eps>0\right\}\,.
\end{equation*}
Let $B\in \calA^*(\Omega)$. We can identify any $u\in SBD^p(B)$
with its zero extension $u\chi_B\in SBD^p(\Omega)$, and correspondingly 
write $F(u,B)$ for $F(u\chi_B,B)$.
By locality, for any other extension the value of the functional is the same.

For $B\in \calA^*(\Omega)$ we define
\begin{equation*}
 m(u,B):=\inf \{ F(w,B): w\in SBD^p(B),\,\, w=u \text{ around } \partial B\}
\end{equation*}
where the condition $w=u$ around $\partial B$ means that a ball $B'\subset\subset B$ exists, so that $w=u$ on $B\setminus B'$.
For $\delta>0$, $A\in \calA(\Omega)$, we set
\begin{align*}
 m^\delta(u,A):=\inf\{ &\sum_{i=1}^\infty m(u, B_i): B_i \in \calA^*, B_i\cap B_j=\emptyset, B_i\subset A, \\
 &\diam(B_i)<\delta, \ \mu(A\setminus \bigcup_{i=1}^\infty B_i)=0\}\,,
\end{align*}
where $\mu:= \calL^2 \LL \Omega + (1+|[u]|) \Huno\LL (J_u\cap \Omega)$. 

Since {$\delta\mapsto m^\delta(u,A)$ is decreasing, we can define} 
\begin{equation*}
  m^*(u,A):= \lim_{\delta\to0} m^\delta(u,A).
 \end{equation*}
{Moreover, we set}
  \begin{equation}\label{eqdeff}
 f(x_0, u_0, \xi) := \limsup_{\eps\to0} \frac{ m(u_0+\xi(\cdot -x_0), B_\eps(x_0))} {\calL^2(B_\eps)}
\end{equation}
\begin{equation}\label{eqdefg}
 g(x_0, a,b,\nu) := \limsup_{\eps\to0} \frac{ m(u_{x_0,a,b,\nu}, B_\eps(x_0))} {2\eps},
\end{equation}
where $u_{x_0,a,b,\nu}$ is defined as 
\begin{equation*}
 u_{x_0,a,b,\nu}(x):=\begin{cases}
              a & \textrm{if }\langle x-x_0,\nu\rangle >0,\\
              b & \textrm{if }\langle x-x_0,\nu\rangle <0.
             \end{cases}
\end{equation*}

 \begin{lemma}\label{lem1}
  For all $u\in SBD^p(\Omega)$ and $A\in\calA(\Omega)$, $F(u,A)=m^*(u,A)$.
 \end{lemma}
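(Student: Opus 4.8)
The assertion $F(u,A)=m^*(u,A)$ is proved by establishing the two inequalities separately, following the scheme of \cite[Section 2.2]{bou-fon-leo-masc}. For the inequality $m^*(u,A)\le F(u,A)$, one uses that $F(\cdot,A)$ is a measure and subadditive: given any admissible competitor family $\{B_i\}$ with $\mu(A\setminus\cup_i B_i)=0$, the growth condition \eqref{e:growthF} forces $F(u,A\setminus\cup_i B_i)=0$ since that set is $\mu$-null (in particular $\calL^2$-null and $\calH^1$-null for $J_u$), so $F(u,A)=\sum_i F(u,B_i)\ge \sum_i m(u,B_i)$, the last step because $u$ itself is an admissible competitor for $m(u,B_i)$. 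Taking the infimum over families and then $\delta\to0$ gives $m^*(u,A)\le F(u,A)$.

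\textbf{The reverse inequality.} For $F(u,A)\le m^*(u,A)$ the idea is: fix $\delta>0$ and an almost-optimal disjoint family $\{B_i\}$ for $m^\delta(u,A)$, and on each $B_i$ pick $w_i\in SBD^p(B_i)$ with $w_i=u$ around $\partial B_i$ and $F(w_i,B_i)\le m(u,B_i)+\eps 2^{-i}$ (say). Glue the $w_i$ to $u$ to form a global competitor $w^\delta\in SBD^p(\Omega)$, equal to $u$ outside $\cup_i B_i$. One must check $w^\delta\to u$ in $L^1(\Omega,\R^2)$ as $\delta\to0$: this uses the uniform bound $F(w_i,B_i)\le m(u,B_i)+\eps2^{-i}\le F(u,B_i)+\eps2^{-i}$ together with the coercivity lower bound in \eqref{e:growthF}, which controls $\|e(w^\delta)\|_{L^p}$ and $|Ew^\delta|$ on $\cup_i B_i$ uniformly, hence by a Poincar\'e–Korn-type argument (or directly, since the $B_i$ shrink and cover a set of $\mu$-measure $\le\mu(A)$) the $L^1$ distance between $w^\delta$ and $u$ goes to zero. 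Then lower semicontinuity (property \ref{theoreprlsc}) gives
\be{
F(u,A)\le \liminf_{\delta\to0} F(w^\delta,A) = \liminf_{\delta\to0}\Big(F(u,A\setminus{\textstyle\bigcup_i}B_i)+\sum_i F(w_i,B_i)\Big)\le \liminf_{\delta\to0}\Big(\eps+\sum_i m(u,B_i)\Big),
}
and since $F(u,A\setminus\cup_i B_i)=0$ and the family was almost optimal, the right side is $\le m^*(u,A)+\eps$ in the limit; letting $\eps\to0$ concludes.

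\textbf{Main obstacle.} The delicate point is the compactness/convergence step $w^\delta\to u$ in $L^1$: one needs a uniform $L^1$ bound on $w^\delta$ and control of its behaviour inside the shrinking balls, which requires exploiting the coercivity part of \eqref{e:growthF} to bound $\|e(w_i)\|_{L^p(B_i)}$ and $\int_{J_{w_i}\cap B_i}(1+|[w_i]|)d\calH^1$ by the corresponding quantities for $u$ (via $m(u,B_i)\le F(u,B_i)$), and then to pass from these strain bounds to an $L^1$ bound on $w_i-u$ on $B_i$. Here one can either invoke a Poincar\'e-type inequality in $SBD^p$ on balls (controlling $w_i$ up to a rigid motion, which is pinned down by the matching boundary datum $w_i=u$ near $\partial B_i$) or, more simply, note that $\|w^\delta-u\|_{L^1(\Omega)}=\sum_i\|w_i-u\|_{L^1(B_i)}$ and that each term is small because the balls have small diameter and the total $(\calL^2+\calH^1)$-mass of $\cup_i B_i$ is bounded; a standard diagonal argument over $\delta\to0$ then yields the needed convergence. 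Everything else is routine measure-theoretic bookkeeping using properties \ref{theoreprborel}–\ref{theoreprgrowth}.
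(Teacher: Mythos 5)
Your proposal follows essentially the same scheme as the paper's proof: the easy inequality $m^*\le F$ via $m(u,B)\le F(u,B)$ and the measure property of $F(u,\cdot)$, and the reverse inequality by gluing nearly optimal competitors on an almost optimal disjoint family of balls, then invoking lower semicontinuity once $w^\delta\to u$ in $L^1$. One caveat: of the two routes you offer for the $L^1$ convergence, only the Poincar\'e one is viable. The ``more simply'' alternative (small diameters plus bounded total $(\calL^2+\calH^1)$-mass of $\cup_i B_i$) does not by itself control $\|w_i-u\|_{L^1(B_i)}$; the paper applies the $BD$-Poincar\'e inequality on each ball, using that $w_i-u$ vanishes near $\partial B_i$, to get $\|w_i-u\|_{L^1(B_i)}\le c\,\diam(B_i)\,|E(w_i-u)|(B_i)$, then sums and bounds $|Ew^\delta|(A)+|Eu|(A)$ via the coercivity in \eqref{e:growthF}. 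With that clarification, your argument matches the paper's.
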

\begin{proof}
 By definition, $m(u,B)\le F(u,B)$ for any ball $B$. Since $F(u,\cdot)$ is a measure,
 we obtain $m^\delta(u,A)\le F(u,A)$ for any $\delta>0$. Therefore
 $m^*(u,A)\le F(u,A)$.
 
 To prove the converse inequality, let $\delta>0$, pick countably many balls $B_i^\delta$ as in 
 the definition of $m^\delta(u,A)$, such that
 \begin{equation*}
  \sum_{i=1}^\infty m(u,B_i^\delta) < m^\delta(u,A)+\delta\,.
 \end{equation*}
By the definition of $m$ there are functions $v_i^\delta\in SBD^p(B_i^\delta)$ such that
$v_i^\delta=u$ around $\partial B_i^\delta$ and $F(v_i^\delta, B_i^\delta)\le m(u,B_i^\delta)+\delta \calL^2(B_i^\delta)$.
We define
\begin{equation*}
 v^\delta:=\sum_{i=1}^\infty v_i^\delta \chi_{B_i^\delta} + u \chi_{N_0^\delta}
\end{equation*}
where $N_0^\delta:=\Omega\setminus \cup_i B_i^\delta$.

By the $BD$ compactness theorem $v^\delta\in BD(\Omega)$ and {by the $SBD$ closure theorem (see also \cite[Theorem 11.3]{gbd})} we conclude
that $v^\delta\in SBD^p(\Omega)$ and
\begin{equation*}
 Ev^\delta=\sum_{i=1}^\infty Ev_i^\delta \LL B_i^\delta + Eu \LL N_0^\delta\,,
\end{equation*}
with
\begin{equation*}
 |Ev^\delta|\LL N^\delta=0,\hskip5mm  \mu(N^\delta)=0\,, \hskip1cm F(v^\delta, N^\delta)=0
\end{equation*}
where $N^\delta:=A\cap N_0^\delta$. Further,
\begin{equation*}
 F(v^\delta, A)=\sum_{i=1}^\infty F(v_i^\delta, B_i^\delta)+F(v^\delta, N^\delta) 
 \le m^\delta(u,A)+\delta + \delta \calL^2(A)\,.
\end{equation*}
We claim that $v^\delta\to u$ in $L^1(\Omega,\R^2)$. Since $F(\cdot, A)$ is lower semicontinuous, this will imply
\begin{equation*}
 F(u,A)\le \liminf_{\delta\to0} F(v^\delta,A) 
\le \liminf_{\delta\to0} m^\delta({u},A) =m^*(u,A)\,.
\end{equation*}
To prove  $v^\delta\to u$, we observe that by Poincar\'e's inequality, $\diam B_i^\delta\le \delta$, and $v^\delta=u$ on $\partial B_i^\delta$
we obtain
\begin{equation*}
 \|v^\delta-u\|_{L^1(B_i^\delta,\R^2)} \le c \delta |Ev^\delta-Eu|(B_i^\delta)\,.
\end{equation*}
Therefore
\begin{align*}
 \|v^\delta-u\|_{L^1(\Omega,\R^2)} \le& 
 \sum_i  \|v^\delta-u\|_{L^1(B_i^\delta,\R^2)} \le
 c \delta (|Ev^\delta|(A)+|Eu|(A))\\
 \le &c \delta (F(v^\delta, A)+F(u,A))\,.
\end{align*}
Since $F(v^\delta,A)$ has a finite limit as $\delta\to0$, this proves 
$v^\delta\to u$ in $L^1(\Omega,\R^2)$.
\end{proof}

 \begin{lemma}\label{lem3}
 For any ball $B_r(x_0)\subset\Omega$ and $\delta>0$ sufficiently small we have
 \begin{enumerate}
  \item $\displaystyle \lim_{\delta\to0} m(u, B_{r-\delta}(x_0))=m(u,B_r(x_0))$;
  \item $\displaystyle m(u,B_{r+\delta}(x_0)))\leq m(u,B_r(x_0))+\beta\int_{B_{r+\delta}(x_0)\setminus B_r(x_0)}(1+|e(u)|^p)dx
 +\beta\int_{J_u\cap B_{r+\delta}(x_0)\setminus B_r(x_0)}(1+|[u]|)d\Huno.
$
 \end{enumerate}
 \end{lemma}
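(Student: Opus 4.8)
The plan is to prove the two monotonicity-type statements for $m(u,\cdot)$ on balls by exploiting the growth condition \eqref{e:growthF} together with the locality and measure properties of $F$. Both parts are essentially ``gluing'' arguments: one transfers a near-optimal competitor from one ball to a slightly larger or smaller one, paying only the energy carried by the thin annulus in between.

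\medskip

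\textbf{Part (i) (inner continuity).} Since $B_{r-\delta}(x_0)\subset B_r(x_0)$ and any $w$ admissible for $m(u,B_{r-\delta}(x_0))$ extends (by $u$ on the annulus) to a competitor admissible for $m(u,B_r(x_0))$, and conversely any competitor for the larger ball restricts to the smaller one only after being matched to $u$ near $\partial B_{r-\delta}$, the two quantities are a priori not directly comparable; the point is the limit as $\delta\to 0$. First I would note the trivial bound $\limsup_{\delta\to0}m(u,B_{r-\delta}(x_0))\le m(u,B_r(x_0))$: given $w$ near-optimal for $B_r(x_0)$, with $w=u$ outside some $B'\subset\subset B_r(x_0)$, for $\delta$ small enough $B'\subset\subset B_{r-\delta}(x_0)$, so $w|_{B_{r-\delta}(x_0)}$ is admissible for $m(u,B_{r-\delta}(x_0))$ and, using that $F(w,\cdot)$ is a measure, $m(u,B_{r-\delta}(x_0))\le F(w,B_{r-\delta}(x_0))\le F(w,B_r(x_0))\le m(u,B_r(x_0))+\epsilon$. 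For the reverse inequality $\liminf_{\delta\to0}m(u,B_{r-\delta}(x_0))\ge m(u,B_r(x_0))$, take $w_\delta$ near-optimal for $B_{r-\delta}(x_0)$ and extend it by $u$ on $B_r(x_0)\setminus B_{r-\delta}(x_0)$; this extension $\tilde w_\delta$ is admissible for $m(u,B_r(x_0))$, so by the upper growth bound applied on the annulus,
\begin{equation*}
m(u,B_r(x_0))\le F(\tilde w_\delta,B_r(x_0))\le m(u,B_{r-\delta}(x_0))+\delta+\beta\!\int_{B_r(x_0)\setminus B_{r-\delta}(x_0)}\!\!(1+|e(u)|^p)dx+\beta\!\int_{J_u\cap(B_r(x_0)\setminus B_{r-\delta}(x_0))}\!\!(1+|[u]|)d\Huno,
\end{equation*}
and the annular integrals vanish as $\delta\to0$ by absolute continuity of the integrals (since $e(u)\in L^p$ and $\Huno\LL J_u$ is finite), giving the claim.

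\medskip

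\textbf{Part (ii) (outer estimate).} This is the direct half: take $w$ admissible for $m(u,B_r(x_0))$ with $F(w,B_r(x_0))\le m(u,B_r(x_0))+\epsilon$ and $w=u$ near $\partial B_r(x_0)$; extend it by $u$ on $B_{r+\delta}(x_0)\setminus B_r(x_0)$ to get $\hat w$, which is admissible for $m(u,B_{r+\delta}(x_0))$ since it equals $u$ near $\partial B_{r+\delta}(x_0)$. Then, using that $F(\hat w,\cdot)$ is a measure and the upper growth bound on the annulus (where $\hat w=u$), together with locality,
\begin{equation*}
m(u,B_{r+\delta}(x_0))\le F(\hat w,B_{r+\delta}(x_0))=F(w,B_r(x_0))+F(u,B_{r+\delta}(x_0)\setminus B_r(x_0)),
\end{equation*}
and bounding the last term by $\beta\int_{B_{r+\delta}(x_0)\setminus B_r(x_0)}(1+|e(u)|^p)dx+\beta\int_{J_u\cap B_{r+\delta}(x_0)\setminus B_r(x_0)}(1+|[u]|)d\Huno$ and letting $\epsilon\to0$ yields the stated inequality.

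\medskip

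The only genuinely delicate point is a measure-theoretic one that appears in both parts: one must know that $F(w,\partial B_r(x_0))=0$ (equivalently that $\partial B_r(x_0)$ is $F(w,\cdot)$-negligible) so that splitting $F$ across the annulus boundary introduces no extra term, and similarly that one may choose the radius so that $\Huno(J_w\cap \partial B_r(x_0))=0$. The first follows because $F(w,\cdot)$ is a finite Borel measure, hence the spheres $\partial B_\rho(x_0)$ have positive $F(w,\cdot)$-measure for at most countably many $\rho$, so part (i) is really proved along a sequence $\delta_n\to0$ with that property, which suffices for the stated limit once combined with monotonicity; in part (ii) the same reasoning lets us choose, if needed, a slightly larger comparison radius, or one simply absorbs any sphere mass of $F(u,\cdot)$ into the right-hand side since it is also controlled by the growth bound. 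I expect this countable-exceptional-radii bookkeeping, rather than any analytic estimate, to be the main thing to get right.
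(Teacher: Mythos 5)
Your core argument for both parts coincides with the paper's proof: extend a near-optimal competitor by $u$ on the thin annulus, split the measure $F(\cdot,\cdot)$ across the (disjoint) decomposition of the larger ball, and absorb the annular contribution via the upper growth bound, using absolute continuity of the dominating measure to send it to zero in part~(i). The measure-theoretic worry in your closing paragraph, however, is unfounded: the upper bound in \eqref{e:growthF} is assumed on \emph{all Borel} subsets of $\Omega$, not just open ones, and the exact splitting $B_r=B_{r-\delta}\sqcup(B_r\setminus B_{r-\delta})$ (with $B_{r-\delta}$ open) already assigns $\partial B_{r-\delta}$ to the annular Borel set, where locality (extended from open subsets to Borel subsets by regularity of the finite Borel measures $F(w,\cdot)$ and $F(u,\cdot)$) and the growth bound control it, after which it vanishes as $\delta\to0$ by dominated convergence. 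No atom-avoidance on spheres and no exceptional-radii selection is needed. Also be careful with your proposed fallback: $\delta\mapsto m(u,B_{r-\delta}(x_0))$ has no obvious monotonicity, since a competitor matching $u$ near $\partial B_{r-\delta}$ is generally not admissible for a different radius, so combining a good subsequence with ``monotonicity'' would not close the gap even if there were one; fortunately there isn't.
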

\begin{proof}
We drop $x_0$ from the notation.
Choose $v_\delta\in SBD^p(B_{r-\delta})$
 with $v_\delta=u$ around $\partial B_{r-\delta}$ and $F(v_\delta, B_{r-\delta})\le m(u, B_{r-\delta})+\delta$.
We define
\begin{equation*}
 w_\delta(x) := \begin{cases}
       v_\delta(x) & \text{ if } x\in  B_{r-\delta}\,,\\
       u(x) & \text{ if } x\in \Omega\setminus  B_{r-\delta}\,.
      \end{cases}
\end{equation*}
We have
\begin{align*}
 m(u,B_r)\le& F(w_\delta,B_r)\le F(v_\delta, B_{r-\delta}) + 
 F(w_\delta,  B_{r}\setminus  B_{r-\delta})\\
 \le &m(u, B_{r-\delta})+\delta\\
 &+ 
 \beta \int_{B_r\setminus  B_{r-\delta}} (|e(u)|^p+1)dx +\beta\int_{J_u\cap B_r\setminus  B_{r-\delta}}
 (1+|[u]|) d\calH^{n-1}\,.
\end{align*}
Since $(1+|e(u)|^p)\calL^2 + (1+|[u]|)\calH^{1}\LL J_u$ is a bounded measure,  we conclude that
\begin{equation*}
 m(u,B_r)\le \liminf_{\delta\to0} m(u, B_{r-\delta})\,.
\end{equation*}
Conversely, for any $\eps>0$ there 
is $v_\eps\in SBD^p(B_r)$ with $v_\eps=u$ around $\partial B_r$ 
and $F(v_\eps,B_r)\le m(u, B_{r})+\eps$. For $\delta>0$  sufficiently small one has
 $v_\eps=u$ on $B_r\setminus B_{r-2\delta}$ and therefore
 $m(u, B_{r-\delta})\le F(v_\eps, B_{r-\delta}) \le m(u, B_{r})+\eps$. Taking first $\delta\to0$ and then $\eps\to0$ concludes the proof of (i). The proof of (ii) is analogous.
\end{proof}

 \begin{lemma}\label{lem2}
For $\mu$-a.e. $x_0\in\Omega$, 
\begin{equation*}
 \lim_{\eps\to0} \frac{ F(u, B_\eps(x_0))}{\mu(B_\eps(x_0))} = \lim_{\eps\to0} \frac{ m(u, B_\eps(x_0))}{\mu(B_\eps(x_0))}\,.
\end{equation*}
 \end{lemma}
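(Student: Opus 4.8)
The plan is to deduce the equality from the two one‑sided bounds and a Besicovitch differentiation. Since by \eqref{e:growthF} one has $F(u,B)\le\beta\int_B|e(u)|^p\,dx+\beta\mu(B)$, the measure $F(u,\cdot)$ is finite and absolutely continuous with respect to $\mu$, so by the Besicovitch differentiation theorem its density $\theta:=dF(u,\cdot)/d\mu$ is well defined and $\theta(x_0)=\lim_{\eps\to0}F(u,B_\eps(x_0))/\mu(B_\eps(x_0))$ for $\mu$-a.e.\ $x_0$. Taking $u$ itself as competitor in the definition of $m$ gives $m(u,B_\eps(x_0))\le F(u,B_\eps(x_0))$, hence $\limsup_{\eps\to0}m(u,B_\eps(x_0))/\mu(B_\eps(x_0))\le\theta(x_0)$ $\mu$-a.e. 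It therefore suffices to prove that $\liminf_{\eps\to0}m(u,B_\eps(x_0))/\mu(B_\eps(x_0))\ge\theta(x_0)$ for $\mu$-a.e.\ $x_0$, and I will do this by contradiction, exploiting the lower semicontinuity \ref{theoreprlsc} and a gluing construction as in Lemma~\ref{lem1}.

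Assume the bound fails. Then there is a rational $\gamma>0$ such that $G:=\{x_0\in\Omega:\ \liminf_{\eps\to0}m(u,B_\eps(x_0))/\mu(B_\eps(x_0))<\theta(x_0)-\gamma\}$ has $\mu(G)>0$. For $\mu$-a.e.\ $x_0\in G$ there are arbitrarily small radii $r$ with both $m(u,B_r(x_0))<(\theta(x_0)-\gamma)\mu(B_r(x_0))$ and $F(u,B_r(x_0))>(\theta(x_0)-\tfrac\gamma2)\mu(B_r(x_0))$ (the second inequality because $F(u,B_r(x_0))/\mu(B_r(x_0))\to\theta(x_0)$); call these \emph{good balls}. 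Set $\epsilon_0:=\mu(G)/(2\mu(\Omega))$; for $\mu$-a.e.\ $x_0\in\Omega\setminus G$ the $\mu$-density of $G$ at $x_0$ vanishes, so there are arbitrarily small $r$ with $\mu(B_r(x_0)\cap G)\le\epsilon_0\mu(B_r(x_0))$; call these \emph{filler balls}. Given $\delta>0$, the good and filler balls of radius $<\delta$, closure in $\Omega$ and $\mu$-negligible boundary form a fine cover of $\mu$-a.e.\ point of $\Omega$, so the Besicovitch--Vitali covering theorem yields a countable pairwise disjoint subfamily $\{B_j\}$ with $\mu(\Omega\setminus\bigcup_j B_j)=0$. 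Writing "good" and "filler" for the two kinds of $B_j$, the density condition gives $\mu(G\cap\bigcup_{j\,\mathrm{filler}}B_j)\le\epsilon_0\mu(\Omega)$, whence $\sum_{j\,\mathrm{good}}\mu(B_j)\ge\mu(G)-\epsilon_0\mu(\Omega)\ge\tfrac12\mu(G)$.

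Fix $\sigma:=\tfrac\gamma8\mu(G)$, choose for each $j$ a field $v_j\in SBD^p(B_j)$ with $v_j=u$ around $\partial B_j$ and $F(v_j,B_j)\le m(u,B_j)+\sigma 2^{-j}$, and set $w:=u+\sum_j(v_j-u)\chi_{B_j}$. Exactly as in Lemma~\ref{lem1} (gluing plus the $SBD$ closure theorem) $w\in SBD^p(\Omega)$ with $Ew=\sum_j Ev_j\LL B_j+Eu\LL(\Omega\setminus\bigcup_j B_j)$; since $\Omega\setminus\bigcup_j B_j$ is $\mu$-negligible and $w=u$ on it, the growth bound \eqref{e:growthF} forces $F(w,\Omega\setminus\bigcup_j B_j)=F(u,\Omega\setminus\bigcup_j B_j)=0$, so by countable additivity and locality $F(w,\Omega)=\sum_j F(v_j,B_j)$ and $F(u,\Omega)=\sum_j F(u,B_j)$. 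On good balls $F(v_j,B_j)-F(u,B_j)\le-\tfrac\gamma2\mu(B_j)+\sigma 2^{-j}$, and on filler balls $F(v_j,B_j)-F(u,B_j)\le\sigma 2^{-j}$ since $m(u,B_j)\le F(u,B_j)$; summing,
\[
F(w,\Omega)\le F(u,\Omega)-\tfrac\gamma2\sum_{j\,\mathrm{good}}\mu(B_j)+\sigma\le F(u,\Omega)-\tfrac\gamma4\mu(G)+\sigma=F(u,\Omega)-\tfrac\gamma8\mu(G).
\]

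Finally, each $v_j-u$ vanishes near $\partial B_j$, so Poincar\'e's inequality together with a uniform (in $\delta$) bound for $\sum_j|Ev_j|(B_j)$ coming from \eqref{e:growthF} gives $\|w-u\|_{L^1(\Omega,\R^2)}\le c\,\delta$ with $c$ independent of $\delta$. Letting $\delta\to0$ produces $w_\delta\to u$ in $L^1(\Omega,\R^2)$ with $F(w_\delta,\Omega)\le F(u,\Omega)-\tfrac\gamma8\mu(G)$ for every $\delta$; by \ref{theoreprlsc} and $F(u,\Omega)<\infty$ this forces $\mu(G)\le0$, a contradiction. Hence $\liminf_{\eps\to0}m/\mu\ge\theta$ $\mu$-a.e., and combined with the reverse inequality above and Lemma~\ref{lem1} the assertion follows. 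The delicate point is the \emph{exact} additivity $F(w,\Omega)=\sum_j F(v_j,B_j)$ and $F(u,\Omega)=\sum_j F(u,B_j)$: this is why the covering must exhaust $\Omega$ up to a $\mu$-null set, so that the possibly topologically irregular residual set carries no $F$-mass; the quantitative $L^1$-control (to feed lower semicontinuity), the measurability of $G$, and the availability of good/filler radii are routine.
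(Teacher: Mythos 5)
Your proof is correct, and the underlying mechanism is the same as the paper's: Besicovitch differentiation, a Vitali covering by balls on which $m$ is noticeably smaller than $F$, filler balls on the complement, and a contradiction driven by lower semicontinuity. The organizational difference worth noting is that you never use Lemma~\ref{lem1} (the identity $F=m^*$) at a substantive step; instead you inline its gluing construction, picking almost-minimizers $v_j$ on a disjoint covering of the whole of $\Omega$, gluing them into $w_\delta$, proving the quantitative $L^1$-estimate $\|w_\delta-u\|_{L^1}\le c\delta$ with $c$ uniform in $\delta$, and closing the argument with hypothesis \ref{theoreprlsc}. The paper instead works on a compact $K\subset U^*$ and a thin neighborhood $U^\eta$, obtains $F(u,U^\eta)\ge\sum_i m(u,B_i)+t\mu(K)\ge m^\delta(u,U^\eta)+t\mu(K)$ directly from the definition of $m^\delta$, and then invokes Lemma~\ref{lem1} to conclude $\mu(K)=0$. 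Your route is a bit more self-contained but re-derives material already packaged in Lemma~\ref{lem1}; the paper's is shorter precisely because it reuses that lemma. Two small remarks: (a) the uniform bound on $\sum_j|Ev_j|(B_j)$ deserves a sentence (the surface part is controlled directly by the growth from below and $\sum_j F(v_j,B_j)\le F(u,\Omega)+\sigma$, and the absolutely continuous part via H\"older and $\calL^2(\Omega)<\infty$); (b) your final sentence cites Lemma~\ref{lem1}, but your argument nowhere needs it --- the two one-sided estimates alone give the assertion.
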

\begin{proof}
 From $m(u,B_\eps(x_0))\le F(u,B_\eps(x_0))$ one
 immediately obtains
 \begin{equation*}
  \limsup_{\eps\to0} \frac{ m(u, B_\eps(x_0))} {\mu(B_\eps(x_0))} \le 
 \limsup_{\eps\to0} \frac{ F(u, B_\eps(x_0))} {\mu(B_\eps(x_0))} 
 \end{equation*}
 for any $x_0\in\Omega$.
To prove the converse inequality, we define for $t>0$ the set
\begin{align*}
 E_t:= \{ x\in \Omega:& \text{ there is } \eps_h\to 0 \text{ such that }  \\
& F(u, B_{\eps_h}(x))>m(u, B_{\eps_h}(x))+ t \mu(B_{\eps_h}(x)) \text{ for all } h\}\,.
\end{align*}
From this definition one immediately has
\begin{equation*}
 \liminf_{\eps\to0} \frac{ F(u, B_\eps(x_0))} {\mu(B_\eps(x_0))} \le
 \liminf_{\eps\to0} \frac{ m(u, B_\eps(x_0))} {\mu(B_\eps(x_0))} + 
 t \hskip4mm\text{ for all } x_0\in\Omega\setminus E_t\,.
\end{equation*}
If we can prove that
\begin{equation}\label{eqmut}
 \mu(E_t)=0\text{ for all } t>0
\end{equation}
then, recalling that $ \lim_{\eps\to0} \frac{ F(u, B_\eps(x_0))} {\mu(B_\eps(x_0))}$ exists $\mu$-almost everywhere,
the proof is concluded.

It remains to prove (\ref{eqmut}) for an arbitrary $t>0$.
For $\delta>0$ we define
\begin{align*}
 X^\delta:=\{ B_\eps(x): &\,\, \eps<\delta, \hskip2mm \overline B_\eps(x)\subset\Omega, \hskip2mm\mu(\partial B_\eps(x))=0,\\
 &F(u, B_\eps(x)) > m(u,B_\eps(x)) + t \mu(B_\eps(x))\}
\end{align*}
and
\begin{align*}
 U^*:= \bigcap_{\delta>0} \{x: \exists \eps>0 \text{ s.t. } B_\eps(x)\in X^\delta\} \,.
\end{align*}
We first show that $E_t\subset U^*$.
Let $x\in E_t$. Then for any $\delta>0$ there is $\eps\in(0,\delta)$ such that
$F(u, B_\eps(x)) > m(u,B_\eps(x)) + t \mu(B_\eps(x))$.
By Lemma \ref{lem3} the function $\eps\to m(u,B_\eps(x))$ is left-continuous; 
$F(u,B_\eps(x))$ is left-continuous because $F(u,\cdot)$ is a measure, therefore the 
same inequality holds for all $\eps'\in (\eps'',\eps)$. In particular, there is one
which additionally  obeys
$\mu(\partial B_\eps(x))=0$.

It remains to show that $\mu(U^*)=0$.
We fix a compact set $K\subset U^*$ and $0<\delta<\eta$. 
Let $U^\eta:=\bigcup\{ B_\eps(x): B_\eps(x)\in X^\eta\}$
and
\begin{align*}
 Y^\delta:= \{ B_\eps(x): \eps<\delta, \overline{B_\eps(x)}\subset U^\eta\setminus K,
 \mu(\partial B_\eps(x))=0\}\,.
\end{align*}
By definition, $X^\delta$ is a fine cover of $K$ and $Y^\delta$ of $U^\eta\setminus K$. Therefore 
there are countably many pairwise disjoint balls $ B_i\in X^\delta$ and $\hat { B}_j\in Y^\delta$
and a set $N$ with $\mu(N)=0$ such that 
\begin{equation*}
 U^\eta=\left( \bigcup_{i\in \N}  B_i\right)
 \cup\left( \bigcup_{j\in \N} \hat { B_j}\right)\cup N\,.
\end{equation*}
Then
\begin{align*}
 F(u, U^\eta)= &\sum_i F(u,  B_i)+\sum_j F(u,\hat B_j) +F(u,N)
\\
\ge & \sum_i (m(u, B_i)+t\mu(B_i)) + \sum_j m (u,\hat B_j)\\
=& \sum_i m(u, B_i) + \sum_j m(u, \hat B_j) + t\mu(\cup_i B_i)\\
 \ge & m^\delta(u, U^\eta) + t \mu(K)
\end{align*}
where in the last step we used the definition of $m^\delta$.
For $\delta\to0$, the definition of $m^*$ and Lemma \ref{lem1} give
\begin{align*}
 F(u, U^\eta)\ge m^*(u, U^\eta) + t\mu(K) = F(u, U^\eta) +t\mu(K)\,.
\end{align*}
Therefore $\mu(K)=0$, and by the regularity of $\mu$ we conclude $\mu(U^*)=0$.

%
%
%
%
%
%
\end{proof}

\subsection{Bounds on the volume term}
In this subsection we identify the volume energy density in the integral 
representation for $F$ to be the function $f$ defined in \eqref{eqdeff}. 
Throughout the whole subsection we consider a fixed map $u\in SBD^p(\Omega)$.
Our first result shows that the local volume energy density can be computed with a
$W^{1,p}$-approximation  to the blow-ups of $u$ (see (\ref{eqconveweps}--\ref{eqconvweps}) below), in the sense that 
\begin{align}\label{equsdfstep12}
   \frac{ dF(u,\cdot)}{d\Ln} (x_0) 
=      \lim_{\eps\to0} \frac{ m\big(w_\eps, B_{\eps}(x_0)\big)}{\calL^2(B_\eps)}.
\end{align}
We shall however not need (\ref{equsdfstep12}), but only the apparently more complex
version in (\ref{equsdfstep1})-(\ref{equsdfstep2}). Taking a diagonal subsequence they imply
(\ref{equsdfstep12}).
\begin{lemma}\label{lemmabdvolpart1}
For $\calL^2$-almost any $x_0\in\Omega$, any $\eps>0$, and any $s\in (0,1)$
there are functions  $w_\eps^s\in  W^{1,p}(B_{s\eps}(x_0);\R^2)$
which obey 
\begin{align}\label{equsdfstep1}
   \frac{ dF(u,\cdot)}{d\Ln} (x_0) 
   \le \liminf_{s\to1}
    \liminf_{\eps\to0} \frac{ m\big(w_\eps^s, B_{s\eps}(x_0)\big)}{\calL^2(B_{s\eps})}
\end{align}
and
\begin{align}\label{equsdfstep2}
 \limsup_{s\to1}
    \limsup_{\eps\to0} \frac{ m\big(w_\eps^s, B_{s^2\eps}(x_0)\big)}{\calL^2(B_{s\eps})}
\le    \frac{ dF(u,\cdot)}{d\Ln} (x_0) 
    \end{align}
and which approximate the affine function $y\mapsto \nabla u(x_0)(y-x_0)+u(x_0)$ in the sense that
\begin{align}\label{eqconveweps}
 \lim_{\eps\to0} \frac{1}{\eps^2} \int_{B_{\eps}(x_0)} |e(w^s_\eps)-e(u)(x_0)|^p dx=0
\end{align}
and
\begin{align}\label{eqconvweps}
 \lim_{\eps\to0} \frac{1}{\eps^{2+p}} \int_{B_{\eps}(x_0)} |w^s_\eps(x)-u(x_0)-\nabla u(x_0)(x-x_0)|^p dx=0\,.
\end{align}
\end{lemma}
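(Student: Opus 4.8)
The plan is to exploit the Radon--Nikod\'ym differentiation of the measure $F(u,\cdot)$ together with the blow-up of $u$ at a Lebesgue point of $e(u)$ and an approximate-differentiability point of $u$, and then to insert the $W^{1,p}$-approximation of Proposition~\ref{p:ricopr} to pass from $u$ to a Sobolev competitor that still realizes the density. First I would fix $x_0\in\Omega$ so that: (a) the finite part $\frac{dF(u,\cdot)}{d\Ln}(x_0)$ exists and equals $\lim_{\eps\to0}F(u,B_\eps(x_0))/(\pi\eps^2)$, which by Lemma~\ref{lem2} also equals $\lim_\eps m(u,B_\eps(x_0))/(\pi\eps^2)$; (b) $x_0$ is a Lebesgue point of $|e(u)|^p$ with $\lim_\eps \eps^{-2}\int_{B_\eps(x_0)}|e(u)-e(u)(x_0)|^p\,dx=0$; (c) $x_0$ is an $L^p$-approximate differentiability point, so $\lim_\eps \eps^{-2-p}\int_{B_\eps(x_0)}|u(x)-u(x_0)-\nabla u(x_0)(x-x_0)|^p\,dx=0$; and (d) the jump part has vanishing density at $x_0$, $\lim_\eps \eps^{-1}(1+|[u]|)\Huno(J_u\cap B_\eps(x_0))=0$. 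All of these hold for $\Ln$-a.e.\ $x_0$ (for (c) one uses the $SBD^p$ fine-properties / the approximate differentiability result, cf.\ \cite{AmbrosioCosciaDalmaso1997}).

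Next, for such an $x_0$ and for each $\eps$ I would apply Proposition~\ref{p:ricopr} on $B_{\eps}(x_0)$ with, say, $s=1/\sqrt2$ (or any fixed $s_0$), which is legitimate once $\eps$ is small since $\Huno(J_u\cap B_\eps(x_0))=o(\eps)$ by (d); this produces $w\in SBD^p(B_\eps(x_0))$ with $w\in W^{1,p}(B_{s_0\eps}(x_0),\R^2)$, $w=u$ outside a small union of balls $\cup_{\calF}B$ with $\eps^{-1}\Ln(\cup_\calF B)+\Huno(\partial\cup_\calF B)\le c\,\eps^{-1}\Huno(J_u\cap B_\eps(x_0))\Huno$-negligible in the limit, $\int_{\cup_\calF B}|e(w)|^p\le c\int_{\cup_\calF B}|e(u)|^p$, and $\|u-w\|_{L^1(B,\R^2)}\le c\,r_B|Eu|(B)$ on each $B\in\calF$. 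I would then take $w_\eps^s$ to be (the restriction to $B_{s\eps}(x_0)$ of) this $w$; the superscript $s\in(0,1)$ in the statement I would handle by additionally intersecting with $B_{s\eps}(x_0)$, so that in fact $w_\eps^s\in W^{1,p}(B_{s\eps}(x_0);\R^2)$ for every $s<s_0$ (and the role of $s\to1$ is only to recover the full ball in the limit — concretely one may keep $s$ close to $1$ by instead applying the Proposition with the parameter $s_0$ tuned to $s$).

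The estimates \eqref{eqconveweps} and \eqref{eqconvweps} then follow: on $B_\eps(x_0)\setminus\cup_\calF B$ we have $w_\eps^s=u$, so the integrands are controlled by (b) resp.\ (c); on $\cup_\calF B$ one uses the volume bound $\int_{\cup_\calF B}|e(w)-e(u)(x_0)|^p\le c\int_{\cup_\calF B}(|e(u)|^p+|e(u)(x_0)|^p)$ together with $\Ln(\cup_\calF B)=o(\eps)\cdot\eps$ and absolute continuity of $\int|e(u)|^p$, which beats the $\eps^{-2}$ normalization; for \eqref{eqconvweps} one combines $\|u-w\|_{L^1(B)}\le c\,r_B|Eu|(B)$, Poincar\'e on each $B$, and the smallness of the measure of $\cup_\calF B$, again against the $\eps^{-2-p}$ weight (this is the one place where one must be a little careful, estimating $|Eu|(\cup_\calF B)\le \int_{\cup_\calF B}|e(u)|\,dx+(1+|[u]|)\Huno(J_u\cap B_\eps(x_0))$, both $o(\eps)$ by (b)/(d), so the $L^1$ error is $o(\eps^3)$ and a Poincar\'e/Sobolev step on the small set upgrades this to the required $L^p$ bound — here one should use that $\cup_\calF B$ has controlled perimeter to apply a relative isoperimetric/Poincar\'e inequality with constant independent of $\eps$, which is exactly what item (i) of Proposition~\ref{p:ricopr} furnishes). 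Finally, for \eqref{equsdfstep1}–\eqref{equsdfstep2}: since $w_\eps^s=u$ near $\partial B_{s\eps}(x_0)$ (the balls of $\calF$ are compactly contained, and $w=u$ outside them), any competitor for $m(u,B_{s\eps}(x_0))$ that agrees with $u$ around the boundary also agrees with $w_\eps^s$ around $\partial B_{s\eps}(x_0)$, whence $m(w_\eps^s,B_{s\eps}(x_0))=m(u,B_{s\eps}(x_0))$ up to boundary-layer corrections controlled via Lemma~\ref{lem3}; dividing by $\calL^2(B_{s\eps})$ and letting $\eps\to0$ then $s\to1$ and invoking Lemma~\ref{lem2} gives both inequalities. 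The main obstacle I expect is precisely the $L^p$-closeness \eqref{eqconvweps} on the bad set $\cup_\calF B$: one only directly gets an $L^1$-estimate from Proposition~\ref{p:ricopr}(vi), and upgrading it to $L^p$ with the strong weight $\eps^{-2-p}$ requires carefully exploiting the perimeter bound on $\cup_\calF B$ and the fact that on each individual ball $w-u$ has vanishing mean (so a scale-invariant Poincar\'e--Sobolev inequality applies with the ball's own radius).
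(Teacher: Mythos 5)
Your overall architecture — fix a good point $x_0$, apply Proposition~\ref{p:ricopr} to construct a $W^{1,p}$-modification, and compare the $m$-functionals by gluing — matches the paper, but two steps do not go through as stated.

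The first and more serious gap concerns \eqref{eqconvweps}. In (c) you assume $L^p$-approximate differentiability of $u$, citing the fine-properties of $BD$; but \cite[Th.\ 7.4]{AmbrosioCosciaDalmaso1997} only yields the $L^1$ version \eqref{eqx0leu}, and no $L^p$ analogue is available in $SBD^p$. Without it, the contribution from $B_\eps(x_0)\setminus\cup_\calF B$ (where $w^s_\eps=u$) cannot be controlled by any of your ingredients, and your fallback — a ball-by-ball Poincar\'e--Sobolev upgrade of the $L^1$ bound in Proposition~\ref{p:ricopr}(vi) — only addresses $\cup_\calF B$ and, moreover, cannot be run on $u$ alone since $u\notin W^{1,p}(B)$ on the individual balls. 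The paper's missing ingredient is Lemma~\ref{lemmal1lp}, a reverse-H\"older estimate $\frac{1}{r^{n+p}}\int_{B_r}|v|^p\,dx\le \frac{c}{r^n}\int_{B_r}|e(v)|^p\,dx + c\bigl(\frac{1}{r^{n+1}}\int_{B_r}|v|\,dx\bigr)^p$ valid for $v\in W^{1,p}$, which is applied \emph{globally} on $B_{s\eps}(x_0)$ to $\tilde w^s_\eps:=w^s_\eps-u(x_0)-\nabla u(x_0)(\cdot-x_0)$. This converts the $L^1$ closeness (from Proposition~\ref{p:ricopr}(vi) and \eqref{eqx0leu}) plus the strain smallness \eqref{eqconveweps} into the needed $L^p$ bound, with no $L^p$ differentiability of $u$ required. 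Relatedly, the paper applies Proposition~\ref{p:ricopr} to $u$ \emph{minus} its affine approximation, not to $u$ itself, so that items (v) and (vi) directly control $e(w^s_\eps)-e(u)(x_0)$ and $w^s_\eps-u$ rather than $e(w^s_\eps)$ and $w^s_\eps$; this is the form that feeds cleanly into Lemma~\ref{lemmal1lp}.

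The second gap is in the passage to \eqref{equsdfstep1}--\eqref{equsdfstep2}. You assert that $w^s_\eps=u$ near $\partial B_{s\eps}(x_0)$ and hence $m(w^s_\eps,B_{s\eps}(x_0))=m(u,B_{s\eps}(x_0))$. But Proposition~\ref{p:ricopr} (with $\rho=\eps/2$) only guarantees that the balls of $\calF$ lie in $B_{(1+s)\eps/2}(x_0)$, which is strictly \emph{larger} than $B_{s\eps}(x_0)$ for $s<1$; so balls may cross $\partial B_{s\eps}(x_0)$ and $w^s_\eps$ need not agree with $u$ there. What is true is that $w^s_\eps=u$ near $\partial B_\eps(x_0)$, and the paper exploits exactly this: for one inequality it glues a competitor $v_\eps$ for $m(w^s_\eps,B_{s^2\eps}(x_0))$ with $w^s_\eps$ on the annulus to produce a competitor for $m(u,B_\eps(x_0))$, estimating the annular error via the growth conditions, \eqref{eqconveweps}, \eqref{eqx0lebju}, and \eqref{e:volume}; for the other it bounds $m(w^s_\eps,B_{s\eps}(x_0))\le F(w^s_\eps,B_\eps(x_0))$, compares the latter to $F(u,B_\eps(x_0))$ by monotone truncation over the ball family together with lower semicontinuity, and controls the correction by Proposition~\ref{p:ricopr}(i). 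The clean identity you invoke is not available, and Lemma~\ref{lem3} alone does not substitute for these gluing arguments.
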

We remark that the ball in (\ref{equsdfstep2}) has radius $s^2\eps$ instead of $s\eps$. The estimate would also hold
on $B_{s\eps}$, the variant we chose is more convenient in the proof of Lemma \ref{lem:volLB}.
\begin{proof}
Let $x_0\in \Omega$ be such that
 \begin{equation}\label{eqx0lebeu}
  \lim_{\eps\to0} \frac{1}{\eps^2} \int_{B_\eps(x_0)} |e(u)(x)-e(u)(x_0)|^p dx =0\,,
 \end{equation}
 \begin{equation}\label{eqx0lebju}
  \lim_{\eps\to0} \frac{1}{\eps^2} \int_{B_\eps(x_0)\cap J_u} (1+|[u]|)d\calH^1=0\,,
 \end{equation}
and
 \begin{equation}\label{eqx0leu}
  \lim_{\eps\to0} \frac{1}{\eps^3} \int_{B_\eps(x_0)} |u(x)-u(x_0) - \nabla u(x_0)(x-x_0)|dx=0\,.
 \end{equation}
 By \cite[Th. 7.4]{AmbrosioCosciaDalmaso1997}, $\calL^2$-almost every $x_0$ obeys (\ref{eqx0leu}), the other two are standard.

 By (\ref{eqx0lebju}), for sufficiently small $\eps$ one has $\calH^1(J_u\cap B_\eps(x_0))\le \eta(1-s)\eps/2$,
where $\eta$ is the constant from Theorem~\ref{t:tecnico}.
By Proposition~\ref{p:ricopr} applied to $u-u(x_0)-\nabla u(x_0)(\cdot-x_0)$ there is $\tilde{w}^s_\eps\in 
SBD^p(B_\eps(x_0))\cap W^{1,p}(B_{s\eps}(x_0);\R^2)$ with properties (i)-(vii) and we set $w^s_\eps:=\tilde{w}^s_\eps+u(x_0)+\nabla u(x_0)(\cdot-x_0)$.
In particular, (\ref{eqconveweps}) follows from (\ref{e:volume}) and (\ref{eqx0lebeu}), while (\ref{eqconvweps}) follows from Lemma 
\ref{lemmal1lp} below applied to $\tilde{w}^s_\eps$,
estimating the right-hand side with (\ref{eqconveweps}), (vi), and \eqref{eqx0lebeu}-(\ref{eqx0leu}).

We first prove (\ref{equsdfstep2}).
By the definition of $m$ and the fact that $F(w^s_\eps,\cdot)$ is a measure follows
\begin{align*}
 m(w^s_\eps, B_{s\eps}(x_0))
 \le F(w^s_\eps, B_{s\eps}(x_0))
 \le F(w^s_\eps, B_{\eps}(x_0))\,.
\end{align*}
Let $(B_i)_{i\in\N}$ be the balls from Proposition~\ref{p:ricopr}. For $M\in\N$ we define
\begin{equation*}
 w_\eps^{s,M}:= u + {\chi_{\cup_{i=1}^M\overline B_i}} (w^s_\eps-u)\,.
\end{equation*}
Then $w_\eps^{s,M}\in SBD^p(B_\eps(x_0))$ and $w_\eps^{s,M}\to w^s_\eps$ in $L^1$ as $M\to\infty$.
Further,
\begin{align*}
 F(w_\eps^{s,M},B_{\eps}(x_0))\le &
F(w_\eps^{s,M},B_{\eps}(x_0)\setminus \cup_{i=1}^M\overline B_i)+\sum_{i=1}^M 
 F(w_\eps^{s,M},\overline B_i)\\
\le& F(u,B_{\eps}(x_0)\setminus \cup_{i=1}^M\overline B_i)+\beta\sum_{i=1}^M 
\int_{\overline B_i} (1+|e(w_\eps^s)|^p) dx
\end{align*}
since $w^{s,M}_\eps=w^s_\eps$ is a $W^{1,p}$ function on each $\overline B_i$.
By monotonicity and lower semicontinuity of $F$ we obtain
\begin{align*}
 F(w_\eps^s,B_{\eps}(x_0))\le &
 F(u,B_{\eps}(x_0))+\beta\sum_{i=1}^\infty
\int_{\overline B_i} (1+|e(w_\eps^s)|^p) dx\\
\le &
 F(u,B_{\eps}(x_0))+c \calL^2(\cup_i B_i) (1+|e(u)|^p(x_0)) \\
 &
 + c \int_{B_{\eps}(x_0)} (|e(w_\eps^s)-e(u)(x_0)|^p) dx
\end{align*}
and, recalling  Proposition~\ref{p:ricopr} (i), conclude the proof of (\ref{equsdfstep2}) 
by \eqref{eqconveweps} and \eqref{eqx0lebeu}. 

It remains to prove (\ref{equsdfstep1}).
Let $v_\eps\in SBD^p(B_{s^2\eps}(x_0))$ be such that $v_\eps=w^s_\eps$ around $\partial B_{s^2\eps}(x_0)$
and $F(v_\eps, B_{s^2\eps}) \le m(w^s_\eps, B_{s^2\eps}(x_0))+\eps^3$. 
We define
\begin{equation*}
 \tilde v_\eps(x):=
 \begin{cases}
  v_\eps(x) & \text{ if } x\in B_{s^2\eps}(x_0)\\
  w^s_\eps(x) & \text{ if } x\in B_\eps(x_0)\setminus B_{s^2\eps}(x_0)\,.
 \end{cases}
\end{equation*}
By definition of $m$ and additivity of $F$ we obtain
\begin{align*}
 m(u, B_{\eps}(x_0))\le &F(\tilde v_\eps, B_{\eps}(x_0)) 
 =
 F(\tilde v_\eps, B_{s^2\eps}(x_0))+ 
 F(\tilde v_\eps, B_\eps(x_0)\setminus B_{s^2\eps}(x_0))
\end{align*}
where by locality of $F$ and definition of $v_\eps$
\begin{align*}
 F(\tilde v_\eps, B_{s^2\eps}(x_0))= 
F(v_\eps, B_{s^2\eps}(x_0))\le m(w^s_\eps, B_{s^2\eps}(x_0)) + \eps^3
\end{align*}
and, since $\tilde v_\eps=w^s_\eps$ outside $B_{s^2\eps}(x_0)$
and $\calH^1(J_{\tilde v_\eps}\cap \partial B_{s^2\eps}(x_0))=0$, 
recalling (\ref{e:volume}) we obtain
\begin{align*}
F(\tilde v_\eps, B_\eps(x_0)\setminus B_{s^2\eps}(x_0))\le &
  \beta\int_{B_\eps(x_0)\setminus B_{s^2\eps}(x_0)} (1+|e(w_\eps^s)|^p) dx\\
&+ \beta\int_{J_u\cap B_\eps(x_0)\setminus B_{s^2\eps}(x_0)} (1+|[u]|) d\calH^1\\
\le & c\beta \calL^2(B_\eps) (1-s^4) (1+|e(u)|^p(x_0))\\
&+ c\beta \int_{B_\eps(x_0)} |e(w_\eps^s)(x)-e(u)(x_0)|^p dx\\
&+ \beta\int_{J_u\cap B_\eps(x_0)} (1+|[u]|)d\calH^1\,.
\end{align*}
Dividing by $\calL^2(B_\eps)$ and taking the limit $\eps\to0$ gives
\begin{align*}
 \lim_{\eps\to0} \frac{m(u, B_\eps(x_0))}{\calL^2(B_\eps)}
 \le \liminf_{\eps\to0} \frac{ m(w^s_\eps, B_{s^2\eps}(x_0))}{\calL^2(B_\eps)} 
 + c\beta (1-s^4)(1+|e(u)|^p(x_0)) \,,
\end{align*}
where we used \eqref{eqconveweps} and (\ref{eqx0lebju}).
Recalling Lemma \ref{lem2} we obtain
\begin{align*}
   \frac{ dF(u,\cdot)}{d\Ln} (x_0) = \lim_{\eps\to0} \frac{m(u, B_\eps(x_0))}{\calL^2(B_\eps)}
   \le \liminf_{s\to1}
    \liminf_{\eps\to0} \frac{ m(w^s_\eps, B_{s^2\eps}(x_0))}{\calL^2(B_{s\eps})}\,.
\end{align*}
This concludes the proof of (\ref{equsdfstep1}).
\end{proof}

The next Lemma is a reverse-H\"older estimate for functions with small strain, of the form
$\|v\|_p\le r \|e(v)\|_p + \|v\|_1 r^{-n/p'}$.
\begin{lemma}\label{lemmal1lp}
For any $p\ge 1$ there is $c>0$ (depending on $n$ and $p$) such that for any $v\in W^{1,p}(B_r;\R^n)$ one has
 \begin{equation*}
   \frac{1}{r^{n+p}}  \int_{B_r} |v|^p dx \le c \frac{1}{r^n} \int_{B_r} |e(v)|^p dx + c \left(\frac{1}{r^{n+1}}\int_{B_r} |v|dx\right)^p\,.
 \end{equation*}
\end{lemma}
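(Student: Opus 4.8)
The plan is to reduce to the unit ball by scaling and then invoke Korn's second inequality together with the finite-dimensionality of the space of infinitesimal rigid motions. First I would dispose of the case $p=1$, which is trivial: the right-hand side already contains the term $c\,r^{-(n+1)}\int_{B_r}|v|\,dx$, which dominates the left-hand side as soon as $c\ge1$. So from now on one may assume $p\in(1,\infty)$.

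Next, by the change of variables $x=ry$ and the substitution $\tilde v(y):=v(ry)$ for $y\in B_1$, one has $e(\tilde v)(y)=r\,e(v)(ry)$, hence
\[
\int_{B_1}|\tilde v|^p\,dy = r^{-n}\int_{B_r}|v|^p\,dx,\qquad
\int_{B_1}|e(\tilde v)|^p\,dy = r^{p-n}\int_{B_r}|e(v)|^p\,dx,\qquad
\int_{B_1}|\tilde v|\,dy = r^{-n}\int_{B_r}|v|\,dx.
\]
Therefore, dividing by $r^p$, the asserted estimate is equivalent to the scale-invariant statement
\[
\int_{B_1}|v|^p\,dx \le c\int_{B_1}|e(v)|^p\,dx + c\,\Big(\int_{B_1}|v|\,dx\Big)^p
\]
for all $v\in W^{1,p}(B_1;\R^n)$, with $c=c(n,p)$.

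To prove this I would use Korn's second inequality on $B_1$: for $p\in(1,\infty)$ there are a constant $c=c(n,p)$ and an infinitesimal rigid motion $a(x)=Ax+b$, with $A\in\R^{n\times n}$ skew-symmetric and $b\in\R^n$ (both depending on $v$), such that $\|v-a\|_{L^p(B_1)}\le c\,\|e(v)\|_{L^p(B_1)}$ (obtained, e.g., by combining the gradient estimate $\|\nabla v-A\|_{L^p(B_1)}\le c\|e(v)\|_{L^p(B_1)}$ with Poincar\'e's inequality). By the triangle inequality it then suffices to bound $\|a\|_{L^p(B_1)}$. Since the space $\calR$ of infinitesimal rigid motions is finite-dimensional and both $\|\cdot\|_{L^p(B_1)}$ and $\|\cdot\|_{L^1(B_1)}$ are norms on $\calR$, they are equivalent, so $\|a\|_{L^p(B_1)}\le c\,\|a\|_{L^1(B_1)}$. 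Finally,
\[
\|a\|_{L^1(B_1)}\le \|v\|_{L^1(B_1)}+\|v-a\|_{L^1(B_1)}\le \|v\|_{L^1(B_1)}+c\,\|v-a\|_{L^p(B_1)}\le \|v\|_{L^1(B_1)}+c\,\|e(v)\|_{L^p(B_1)}.
\]
Combining these bounds yields $\|v\|_{L^p(B_1)}\le c\,\|e(v)\|_{L^p(B_1)}+c\,\|v\|_{L^1(B_1)}$, and raising to the $p$-th power (using $(s+t)^p\le 2^{p-1}(s^p+t^p)$) gives the scale-invariant inequality, hence the Lemma after undoing the scaling.

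The only substantial ingredient is Korn's second inequality, which is also the reason the argument is restricted to $p>1$; but since the case $p=1$ is trivial, there is effectively no obstacle. The one point that needs (minor) care is the bookkeeping of the powers of $r$ in the scaling step, so that the last term lands precisely on $\big(r^{-(n+1)}\int_{B_r}|v|\big)^p$.
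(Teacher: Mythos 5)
Your proof is correct and follows essentially the same route as the paper's: scale to the unit ball, apply the Korn--Poincar\'e inequality to produce an infinitesimal rigid motion $a$ with $\|v-a\|_{L^p}\le c\|e(v)\|_{L^p}$, use equivalence of norms on the finite-dimensional space of affine maps to control $\|a\|_{L^p}$ by $\|a\|_{L^1}$, and close with the triangle inequality. The only (welcome) addition on your part is the explicit observation that $p=1$ is trivial, which is needed since Korn's inequality requires $p>1$; the paper leaves this implicit.
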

\begin{proof}
By scaling it suffices to consider $r=1$.
 By Korn's inequality there is an affine function $a$ such that
 \begin{equation*}
     \int_{B_1} |v-a|^p dx \le c \int_{B_1} |e(v)|^p dx \,.
 \end{equation*}
Since $a$ is affine,
 \begin{equation*}
     \int_{B_1} |a|^p dx \le c\left( \int_{B_1} |a| dx\right)^p 
     \le c\left( \int_{B_1} |v| dx\right)^p 
      + c \int_{B_1} |v-a|^p dx\,.
 \end{equation*}
A triangular inequality concludes the proof.
\end{proof}

\begin{lemma}\label{lem:volUB}
 For $\Ln$-a.e. $x_0\in\Omega$,
 \begin{equation*}
  \frac{ dF(u,\cdot)}{d\Ln} (x_0) \le f(x_0, u(x_0), \nabla u(x_0))
 \end{equation*}
where $f$ was defined in (\ref{eqdeff}).
\end{lemma}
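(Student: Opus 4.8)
The plan is to bound $\frac{dF(u,\cdot)}{d\Ln}(x_0)$ from above by $f(x_0,u(x_0),\nabla u(x_0))$ at every point $x_0$ which is simultaneously a Lebesgue point in the strong senses recorded in Lemma~\ref{lemmabdvolpart1}, i.e.\ \eqref{eqx0lebeu}, \eqref{eqx0lebju}, \eqref{eqx0leu}, and for which the conclusions \eqref{equsdfstep1}, \eqref{eqconveweps}, \eqref{eqconvweps} of that lemma hold; by standard Besicovitch and rectifiability arguments these conditions are satisfied $\Ln$-a.e. The starting point is inequality \eqref{equsdfstep1}, which gives
\[
 \frac{dF(u,\cdot)}{d\Ln}(x_0)\le \liminf_{s\to1}\liminf_{\eps\to0}\frac{m\big(w^s_\eps,B_{s\eps}(x_0)\big)}{\calL^2(B_{s\eps})}.
\]
Hence it suffices to show, for each fixed $s\in(0,1)$, that
\[
 \limsup_{\eps\to0}\frac{m\big(w^s_\eps,B_{s\eps}(x_0)\big)}{\calL^2(B_{s\eps})}\le f(x_0,u(x_0),\nabla u(x_0)),
\]
after which letting $s\to1$ finishes the proof (the definition of $f$ in \eqref{eqdeff} involves a $\limsup$, so no extra uniformity in $s$ is needed on the right).

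First I would fix $s$, set $\xi:=\nabla u(x_0)$, $u_0:=u(x_0)$, and denote by $\ell(x):=u_0+\xi(x-x_0)$ the affine competitor appearing in the definition of $f$. The competitor for $m(w^s_\eps,B_{s\eps}(x_0))$ will be obtained by gluing: take a near-optimal test field $v$ for $m(\ell,B_{\eps}(x_0))$, i.e.\ $v=\ell$ near $\partial B_\eps(x_0)$ with $F(v,B_\eps(x_0))\le m(\ell,B_\eps(x_0))+\eps^3$, and interpolate between $v$ on an inner ball and $w^s_\eps$ on the annulus up to $\partial B_{s\eps}(x_0)$, in the same spirit as the gluing performed in Lemma~\ref{lem3} and in the proof of \eqref{equsdfstep1} inside Lemma~\ref{lemmabdvolpart1}. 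Actually it is cleaner to proceed the other way: since $w^s_\eps\in W^{1,p}(B_{s\eps}(x_0);\R^2)$ and is $L^p$-close to $\ell$ by \eqref{eqconveweps}--\eqref{eqconvweps}, one estimates $m(w^s_\eps,B_{s\eps}(x_0))$ by constructing a competitor $\tilde v$ which equals $w^s_\eps$ near $\partial B_{s\eps}(x_0)$, equals a near-optimal field for $m(\ell,B_{(1-t)s\eps}(x_0))$ on the inner ball, and in the thin annulus of width $ts\eps$ is a convex interpolation between $w^s_\eps$ and $\ell$ cut off by a Lipschitz function with gradient $O(1/(ts\eps))$. Using the upper growth bound \eqref{e:growthF}, the energy of $\tilde v$ on the annulus is controlled by
\[
\beta\int_{\text{annulus}}\big(1+|e(\tilde v)|^p\big)dx
\le c\,\calL^2(B_{s\eps})\Big(t+ \frac{1}{(ts\eps)^{p+2}}\int_{B_{s\eps}(x_0)}|w^s_\eps-\ell|^p dx + \frac{1}{\eps^2}\int_{B_{s\eps}(x_0)}|e(w^s_\eps)-\xi|^p dx + |e(u)(x_0)|^p\Big),
\]
and the jump part of the annular energy is negligible because $\calH^1(J_{\tilde v}\cap(\text{annulus}))\le \calH^1(J_{w^s_\eps}\cap B_{s\eps})$, which by Proposition~\ref{p:ricopr}(iv) equals $0$ modulo $J_u$-contributions controlled by \eqref{eqx0lebju}. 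Dividing by $\calL^2(B_{s\eps})$, sending $\eps\to0$ and using \eqref{eqconveweps}, \eqref{eqconvweps}, \eqref{eqx0lebeu}, \eqref{eqx0lebju}, the annular term tends to $c\,t\,(1+|e(u)(x_0)|^p)$, and the inner-ball term tends to $\limsup_{\eps\to0} m(\ell,B_{(1-t)s\eps}(x_0))/\calL^2(B_{(1-t)s\eps})\,(1-t)^2\le f(x_0,u_0,\xi)$ by the very definition \eqref{eqdeff} (up to the harmless factor $(1-t)^2\le 1$). Letting $t\to0$ removes the error, and then $s\to1$ is not even needed here but closes the chain with \eqref{equsdfstep1}.

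The main obstacle I anticipate is bookkeeping the gluing so that the interpolated field is genuinely admissible for $m(w^s_\eps,B_{s\eps}(x_0))$ — i.e.\ it must coincide with $w^s_\eps$ on a neighbourhood of $\partial B_{s\eps}(x_0)$ (not just on the boundary), it must lie in $SBD^p$, and its jump set on the gluing annulus must not acquire spurious $\calH^1$-mass. The $SBD^p$ membership follows from the $SBD$ closure/gluing theorem (as used repeatedly in Lemma~\ref{lem1} and Lemma~\ref{lem3}); the no-spurious-jump point uses that $w^s_\eps$ is $W^{1,p}$ on $B_{s\eps}(x_0)$, so on the annulus the only jumps of the interpolant come from $J_u$, whose measure is $o(\eps)$ hence a fortiori $o(\eps^2)$ in the relevant normalisation by \eqref{eqx0lebju}. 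A secondary technical point is that the cut-off width $ts\eps$ must be kept strictly inside $B_{s\eps}(x_0)$ while still containing an inner ball on which $\ell$ can be compared to $w^s_\eps$; choosing the radii $B_{(1-t)s\eps}\subset B_{s\eps}$ and letting $t\to0$ after $\eps\to0$ handles this cleanly. Once these points are in place the estimate is a routine application of the upper bound in \eqref{e:growthF}, Jensen's inequality on the annulus, and the convergences \eqref{eqconveweps}--\eqref{eqconvweps} supplied by Lemma~\ref{lemmabdvolpart1}.
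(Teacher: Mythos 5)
Your proof is correct and follows essentially the same route as the paper: you start from \eqref{equsdfstep1}, glue a near-optimal competitor for $m(u(x_0)+\nabla u(x_0)(\cdot-x_0),\cdot)$ on an inner ball to $w^s_\eps$ via a cut-off on an annulus, and control the annular excess energy using \eqref{e:growthF} together with \eqref{eqconveweps}--\eqref{eqconvweps} and \eqref{eqx0lebeu}--\eqref{eqx0lebju}. The only difference is purely notational — you introduce an auxiliary parameter $t$ for the inner radius $(1-t)s\eps$ whereas the paper uses $s^2\eps$ inside $s\eps$ and lets $s\to1$ handle both \eqref{equsdfstep1} and the annular error simultaneously; the intermediate bound you display has a harmless misprint (the coefficient of $|e(u)(x_0)|^p$ should carry a factor $t$, and $1/(ts\eps)^{p+2}$ should read $1/((s\eps)^2(ts\eps)^p)$), but neither affects the $\eps\to0$ limit or the conclusion.
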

\begin{proof}
Let $x_0$, $w_\eps^s$ be as in Lemma \ref{lemmabdvolpart1}, for $s\in (0,1)$.
We choose $v^s_\eps\in SBD^p(B_{s^2 \eps}(x_0))$ such that
$v^s_\eps(x)=u(x_0)+\nabla u(x_0)(x-x_0)$ around $\partial B_{s^2\eps}(x_0)$ and
$F(v^s_\eps,  B_{s^2\eps}(x_0))\le m(u(x_0)+\nabla u(x_0)(\cdot-x_0),  B_{s^2\eps}(x_0))+\eps^3$.
We extend it to $\R^2$ setting it equal to 
$ u(x_0)+\nabla u(x_0)(\cdot-x_0)$ outside $B_{s^2\eps}(x_0)$ and
choose $\varphi\in C^\infty_c(B_{s\eps}(x_0))$ with $\varphi=1$ on $B_{s^2\eps}(x_0)$
and $\|D\varphi\|_\infty \le c/(s(1-s) \eps)$. We define
\begin{align*}
z^s_\eps := \varphi v^s_\eps + (1-\varphi) w_\eps^s\,.
\end{align*}
We remark that $z^s_\eps=v^s_\eps$ on  $B_{s^2\eps}(x_0)$
and $z^s_\eps\in W^{1,p}( B_{s\eps}(x_0)\setminus  B_{s^2\eps}(x_0);\R^2)$.
Then
\begin{align*}
 m(w^s_\eps, B_{s\eps}(x_0)) \le& F(z^s_\eps,  B_{s\eps}(x_0))
 \le F(v^s_\eps,  B_{s^2\eps}(x_0)) + F(z^s_\eps, B_{s\eps}(x_0)\setminus  B_{s^2\eps}(x_0)) \\
  \le &m(u(x_0)+\nabla u(x_0)(\cdot-x_0), B_{s^2\eps}(x_0)) +\eps^3\\
& + \beta \int_{ B_{s\eps}(x_0)\setminus  B_{s^2\eps}(x_0)} (1+|e(z^s_\eps)|^p) dx\,.
\end{align*}
In order to estimate the error term, we observe that in $B_{s\eps}(x_0)\setminus  B_{s^2\eps}(x_0)$ one has
\begin{equation*}
 \nabla z^s_\eps-\nabla u(x_0)= (u(x_0)+\nabla u(x_0)(\cdot-x_0)-w^s_\eps) \nabla\varphi + (1-\varphi) (\nabla u(x_0)-\nabla w^s_\eps) 
\end{equation*}
which implies
\begin{align*}
 \int_{ B_{s\eps}(x_0)\setminus  B_{s^2\eps}(x_0)} (1+|e(z^s_\eps)|^p) dx
 \le& c(1-s)\calL^2(B_{s\eps}) (1+|e(u)|^p(x_0))\\
 &+ c \int_{B_{s\eps}(x_0)} |e(u)(x_0)-e(w_\eps^s)|^p  dx\\
&+c \int_{B_{s\eps}(x_0)}\frac{|u(x_0)+\nabla u(x_0)(\cdot-x_0)-w_\eps^s|^p}{\eps^p s^p(1-s)^p}  dx.
\end{align*}
Therefore
\begin{align*}
 \limsup_{\eps\to0} \frac{F(z^s_\eps, B_{s\eps}(x_0)\setminus  B_{s^2\eps}(x_0)) }{\calL^2(B_{s\eps})}\le c 
 (1-s) (1+|e(u)|^p(x_0))
\end{align*}
and
\begin{align*}
 \limsup_{\eps\to0} \frac{ m(w_\eps^s, B_{s\eps}(x_0)) }{\calL^2(B_{s\eps})}\le &
 \limsup_{\eps\to0} \frac{  m(u(x_0)+\nabla u(x_0)(\cdot-x_0), B_{s^2\eps}(x_0))}{\calL^2(B_{s\eps})}\\
 &+
c (1-s) (1+|e(u)|^p(x_0))\\
=& s^2f(x_0,u_0,\nabla u(x_0)) +
c (1-s) (1+|e(u)|^p(x_0))\,.
\end{align*}
Since $s$ was arbitrary, this concludes the proof.
%
\end{proof}

\begin{lemma}\label{lem:volLB}
 For $\Ln$-a.e. $x_0\in\Omega$,
 \begin{equation*}
  f(x_0, u(x_0), \nabla u(x_0))\le \frac{ dF(u,\cdot)}{d\Ln} (x_0) 
 \end{equation*}
where $f$ was defined in (\ref{eqdeff}).
\end{lemma}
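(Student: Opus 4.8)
The plan is to run the construction from the proof of Lemma~\ref{lem:volUB} in the opposite direction: instead of gluing a near‑optimal competitor for the affine map into the $W^{1,p}$‑approximation $w^s_\eps$ of Lemma~\ref{lemmabdvolpart1}, I would glue a near‑optimal competitor for $m(w^s_\eps,\cdot)$ into the affine map, and then invoke the sharp upper bound~\eqref{equsdfstep2}. Fix $x_0$ for which the conclusions of Lemma~\ref{lemmabdvolpart1} hold (this is $\Ln$‑a.e.\ $x_0$), write $\ell_{x_0}(x):=u(x_0)+\nabla u(x_0)(x-x_0)$, fix $s\in(0,1)$, and let $w^s_\eps\in SBD^p(B_\eps(x_0))\cap W^{1,p}(B_{s\eps}(x_0);\R^2)$ be the function of that lemma.

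For small $\eps>0$ I would select $v^s_\eps\in SBD^p(B_{s^2\eps}(x_0))$ with $v^s_\eps=w^s_\eps$ around $\partial B_{s^2\eps}(x_0)$ and $F(v^s_\eps,B_{s^2\eps}(x_0))\le m(w^s_\eps,B_{s^2\eps}(x_0))+\eps^3$, pick $\varphi\in C^\infty_c(B_{s\eps}(x_0))$ with $\varphi\equiv1$ on $B_{s^2\eps}(x_0)$, $0\le\varphi\le1$, $\|D\varphi\|_\infty\le c/(s(1-s)\eps)$, and set $z^s_\eps:=v^s_\eps$ on $B_{s^2\eps}(x_0)$ and $z^s_\eps:=\varphi\,w^s_\eps+(1-\varphi)\,\ell_{x_0}$ on $B_\eps(x_0)\setminus B_{s^2\eps}(x_0)$. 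Since $v^s_\eps$ and the interpolation both coincide with $w^s_\eps$ near $\partial B_{s^2\eps}(x_0)$ and $w^s_\eps$ is $W^{1,p}$ on $B_{s\eps}(x_0)$, one checks that $z^s_\eps\in SBD^p(B_\eps(x_0))$, that it equals $\ell_{x_0}$ near $\partial B_\eps(x_0)$, that it is $W^{1,p}$ and hence jump‑free on $B_\eps(x_0)\setminus\overline{B_{s^2\eps}(x_0)}$, and that $J_{z^s_\eps}\subseteq J_{v^s_\eps}\subseteq B_{s^2\eps}(x_0)$; in particular $z^s_\eps$ is admissible for $m(\ell_{x_0},B_\eps(x_0))$. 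Using that $F(z^s_\eps,\cdot)$ is a measure which carries no mass on $\partial B_{s^2\eps}(x_0)$, together with locality and the growth bound~\eqref{e:growthF} (whose surface term is absent on $B_\eps(x_0)\setminus B_{s^2\eps}(x_0)$ since $J_{z^s_\eps}$ is disjoint from it), one gets
\begin{equation*}
 m(\ell_{x_0},B_\eps(x_0))\le m(w^s_\eps,B_{s^2\eps}(x_0))+\eps^3+\beta\int_{B_\eps(x_0)\setminus B_{s^2\eps}(x_0)}\big(1+|e(z^s_\eps)|^p\big)\,dx.
\end{equation*}

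To control the last integral I would note that $e(z^s_\eps)=e(u)(x_0)$ on $B_\eps(x_0)\setminus B_{s\eps}(x_0)$, while on the annulus $B_{s\eps}(x_0)\setminus B_{s^2\eps}(x_0)$ the same computation as in Lemma~\ref{lem:volUB} gives $|e(z^s_\eps)-e(u)(x_0)|\le\|D\varphi\|_\infty\,|w^s_\eps-\ell_{x_0}|+|e(w^s_\eps)-e(u)(x_0)|$. Dividing the displayed inequality by $\Ln(B_\eps)$ and letting $\eps\to0$ with $s$ frozen, the constant and $|e(u)(x_0)|^p$ contributions add up to $c\beta(1-s^2)(1+|e(u)(x_0)|^p)$, the term coming from $\|D\varphi\|_\infty\,|w^s_\eps-\ell_{x_0}|$ tends to $0$ by~\eqref{eqconvweps}, and the term coming from $|e(w^s_\eps)-e(u)(x_0)|$ tends to $0$ by~\eqref{eqconveweps}; since $\Ln(B_{s\eps})=s^2\Ln(B_\eps)$ this yields
\begin{equation*}
 f(x_0,u(x_0),\nabla u(x_0))\le\limsup_{\eps\to0}\frac{m(w^s_\eps,B_{s^2\eps}(x_0))}{\Ln(B_{s\eps})}+c\beta(1-s^2)\big(1+|e(u)(x_0)|^p\big).
\end{equation*}
Letting finally $s\to1$ and applying~\eqref{equsdfstep2} gives $f(x_0,u(x_0),\nabla u(x_0))\le\frac{dF(u,\cdot)}{d\Ln}(x_0)$.

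I expect no single step to be technically hard; the delicate point is the order of the two limits. Each error term becomes negligible only after $\eps\to0$ is taken first with $s$ fixed — this is exactly where the Lebesgue‑point decay rates~\eqref{eqconveweps} and~\eqref{eqconvweps} enter, and where the unfavourable weight $\|D\varphi\|_\infty^p\sim\eps^{-p}(1-s)^{-p}$ from the cutoff is absorbed — and only afterwards does $s\to1$ eliminate the $(1-s^2)$ remainder. This is also the reason the radius $s^2\eps$ in~\eqref{equsdfstep2}, rather than $s\eps$, is the convenient one: it leaves the annulus $B_{s\eps}(x_0)\setminus B_{s^2\eps}(x_0)$, on which $w^s_\eps$ is still a $W^{1,p}$ function, free for the interpolation between $v^s_\eps$ and $\ell_{x_0}$.
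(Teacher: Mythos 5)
Your proposal is correct and follows essentially the same route as the paper's proof: both glue a near-optimal competitor for $m(w^s_\eps,B_{s^2\eps}(x_0))$ into the affine map $\ell_{x_0}$ via a cutoff supported in $B_{s\eps}(x_0)$, estimate the annular error term exactly as in Lemma~\ref{lem:volUB}, and then invoke~\eqref{equsdfstep2} after sending first $\eps\to0$ and then $s\to1$. The only cosmetic difference is that you test the resulting competitor against $m(\ell_{x_0},B_\eps(x_0))$ while the paper uses $m(\ell_{x_0},B_{s\eps}(x_0))$, which enlarges your annulus to $B_\eps\setminus B_{s^2\eps}$ (so the factor you wrote as $(1-s^2)$ should really be $(1-s^4)$) but has no effect on the conclusion.
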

\begin{proof}
We choose $x_0$ and $w_\eps^s$ as in Lemma \ref{lemmabdvolpart1}, for
 $s\in (0,1)$.
We let  $v^s_{\eps}\in SBD^p(B_{s^2\eps}(x_0))$ be such that $v_{\eps}^s=w_\eps^s$ 
around $\partial B_{s^2\eps}(x_0)$
and
$F(v_\eps^s, B_{s^2\eps}(x_0))\le m(w_\eps^s, B_{s^2\eps}(x_0))+\eps^3$, and
extend it to $B_{s\eps}(x_0)$ setting it equal to $w_\eps^s$ outside $B_{s^2\eps}(x_0)$.
We choose $\varphi\in C^\infty_c(B_{s\eps}(x_0))$ with $\varphi=1$ on $B_{s^2\eps}(x_0)$
and $\|D\varphi\|_\infty \le c/(s(1-s)\eps)$ and define
\begin{align*}
z^s_\eps := \varphi  v^s_\eps + (1-\varphi) (u(x_0)+\nabla u(x_0)(x-x_0))\,.
\end{align*}
Then 
\begin{align*}
 m(u(x_0)+\nabla u(x_0)(\cdot -x_0), &B_{s\eps}(x_0))
      \le  F(z^s_\eps, B_{s\eps}(x_0))\\
=& F(v_\eps^s, B_{s^2\eps}(x_0))
+F(z^s_\eps, B_{s\eps}(x_0)\setminus B_{s^2\eps}(x_0))
\\      \le&  m(w_\eps^s, B_{s^2\eps}(x_0))
      +\eps^3 + F(z^s_\eps, B_{s\eps}(x_0)\setminus B_{s^2\eps}(x_0))\,.
     \end{align*}
In order to estimate the error term, we observe that in $B_{s\eps}(x_0)\setminus  B_{s^2\eps}(x_0)$ one has
\begin{equation*}
 \nabla z^s_\eps-\nabla u(x_0)= -(u(x_0)+\nabla u(x_0)(\cdot-x_0)-w_\eps^s) \nabla\varphi + \varphi (\nabla w_\eps^s-\nabla u(x_0)) 
\end{equation*}
which leads as in the proof of Lemma \ref{lem:volUB} to
\begin{align*}
 \limsup_{\eps\to0} \frac{F(z^s_\eps, B_{s\eps}(x_0)\setminus  B_{s^2\eps}(x_0)) }{\calL^2(B_{s\eps})}\le c 
 (1-s) (1+|e(u)|^p(x_0))\,.
\end{align*}
     We conclude that for any $s\in(0,1)$
 \begin{align*}
&\limsup_{\eps\to0}      \frac{ m(u(x_0)+\nabla u (x_0)(\cdot -x_0), B_{s\eps}(x_0))} {\Ln(B_{s\eps})}\\
          \le& \limsup_{\eps\to0}  \frac{ m(w_\eps^s, B_{s^2\eps}(x_0))} {\Ln(B_{s\eps})}
+c  (1-s) (1+|e(u)|^p(x_0))\,.
          \end{align*}
          Since $s$ was arbitrary, this concludes the proof.
\end{proof}

\subsection{Bounds on the surface term}\label{s:surface}

In the current subsection we identify the function $g$ in \eqref{eqdefg} to be the surface 
energy density in the integral representation of $F$.
As above, we work with a fixed map $u\in SBD^p(\Omega)$.

We first prove a technical result. 
\begin{lemma}\label{lem:surftech}
 For $\Huno$-a.e. $x_0\in J_u$  
 the functions $v_{2\eps}\in SBV^p(B_{2\eps}(x_0),\R^2)$ 
 introduced in Lemma~\ref{lem:refl} 
 satisfy for all $t\in(0,2)$
 \begin{equation}\label{e:mweps}
  \frac{ dF(u,\cdot)}{d\Huno\LL J_u} (x_0) =\lim_{\eps\to 0}\frac{m(v_{2\eps},B_{t\eps}(x_0))}{2t\eps}.
 \end{equation}
\end{lemma}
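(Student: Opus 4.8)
The plan is to mimic the structure of the volume case (Lemma~\ref{lem:volUB}, Lemma~\ref{lem:volLB}), replacing $\calL^2$-blow-up by $\Huno\LL J_u$-blow-up and using the reflected competitor $v_{2\eps}$ from Lemma~\ref{lem:refl} in place of the $W^{1,p}$-approximation $w^s_\eps$. First I would fix $x_0\in J_u$ so that the density $\tfrac{dF(u,\cdot)}{d\Huno\LL J_u}(x_0)$ exists and is finite, so that the Besicovitch-type differentiation $\lim_\eps F(u,B_\eps(x_0))/(2\eps)$ equals this density (using $\Huno(J_u\cap\partial B_\eps(x_0))=0$ for a.e.\ $\eps$ and the upper growth bound in \eqref{e:growthF} to control the contribution of $\mu$ minus the surface part, which by \eqref{eqx0lebeu}-type statements is $o(\eps)$), and so that all the properties (i)--(vi) of Lemma~\ref{lem:refl} and the choices \eqref{eq:diffsimm}--\eqref{e:choice3} hold. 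The analogue of Lemma~\ref{lem2} for the measure $\mu$ already gives $\lim_\eps m(u,B_{t\eps}(x_0))/\mu(B_{t\eps}(x_0)) = \lim_\eps F(u,B_{t\eps}(x_0))/\mu(B_{t\eps}(x_0))$, and since $\mu(B_{t\eps}(x_0))/(2t\eps)\to |[u](x_0)|+1$ at such points, it suffices to compare $m(v_{2\eps},B_{t\eps}(x_0))$ with $m(u,B_{t\eps}(x_0))$ up to $o(\eps)$.

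For the inequality ``$\le$'' in \eqref{e:mweps} (upper bound for $m(v_{2\eps},\cdot)$): given a near-optimal competitor $w$ for $m(u,B_{t\eps}(x_0))$, i.e.\ $w=u$ near $\partial B_{t\eps}(x_0)$ and $F(w,B_{t\eps}(x_0))\le m(u,B_{t\eps}(x_0))+\eps^2$, I would splice it into $v_{2\eps}$ on a slightly smaller ball using a cutoff, exactly as in Lemma~\ref{lem:volUB}: pick $\varphi\in C^\infty_c(B_{t\eps}(x_0))$ equal to $1$ on $B_{st\eps}(x_0)$ with $\|D\varphi\|_\infty\le c/((1-s)t\eps)$ and set $z:=\varphi w + (1-\varphi) v_{2\eps}$ — but here one must be careful because $w$ and $v_{2\eps}$ are only $SBD^p$, not $W^{1,p}$, so the product rule produces jump terms; instead one should first modify $w$ to agree with $u$ (hence with $v_{2\eps}$ up to the controlled set $\{u\ne v_{2\eps}\}$) on an annulus, and estimate the energy of the interpolation region by the upper bound in \eqref{e:growthF} applied on $B_{t\eps}\setminus B_{st\eps}$, where the surface contribution is $\le \beta\Huno(J_u\cap B_{t\eps}\setminus B_{st\eps}) + o(\eps)$ by Lemma~\ref{lem:refl}(i) and \eqref{eq:diffsimm}, the bulk gradient contribution is controlled by Lemma~\ref{lem:refl}(ii) and (iv) together with the $L^\infty$ or $L^p$ closeness $|v_{2\eps}-u_{x_0}|$ from Lemma~\ref{lem:refl}(v), and the $\calL^2$-measure of $\{u\ne v_{2\eps}\}$ is $o(\eps^2)$ by Lemma~\ref{lem:refl}(iii); dividing by $2t\eps$, sending $\eps\to0$ and then $s\to1$ gives $\limsup_\eps m(v_{2\eps},B_{t\eps}(x_0))/(2t\eps)\le \lim_\eps m(u,B_{t\eps}(x_0))/(2t\eps)$, i.e.\ the density. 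The reverse inequality ``$\ge$'' is the symmetric construction: take a near-optimal competitor $v$ for $m(v_{2\eps},B_{t\eps}(x_0))$ on a smaller ball $B_{st\eps}(x_0)$, extend by $v_{2\eps}$, then by $u$ on an outer annulus using another cutoff, and estimate the two transition annuli again by \eqref{e:growthF} and the smallness estimates of Lemma~\ref{lem:refl}; this yields $m(u,B_{t\eps}(x_0))\le m(v_{2\eps},B_{st\eps}(x_0))+o(\eps)+c(1-s)\eps(|[u](x_0)|+1)$, hence (after the two limits) the density $\le \liminf_\eps m(v_{2\eps},B_{t\eps}(x_0))/(2t\eps)$.

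The main obstacle I anticipate is the gluing across the annulus in the $SBD^p$ setting: unlike the $W^{1,p}$ situation in the volume lemmas, cutting between two $SBD^p$ functions can create uncontrolled jump along the cutoff sphere, and one genuinely needs the fact (encoded in Lemma~\ref{lem:refl} and Proposition~\ref{p:ricopr}) that $v_{2\eps}$ and $u$ differ only on a set of area $o(\eps^2)$ and that $v_{2\eps}$ has an essentially vanishing gradient in $L^p(B_\eps(x_0))$ after rescaling, so that the transition energy is genuinely lower order; managing the interplay between the scales $\eps$, $s^2\eps$, $t\eps$ and the factor $(1-s)$ in the cutoff derivative, while keeping the surface term $\Huno(J_u\cap B_{t\eps}\setminus B_{st\eps})$ under control, is where the care is needed. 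A secondary technical point is ensuring $\Huno(J_u\cap\partial B_{t\eps}(x_0))=0$ and $\Huno(J_{v_{2\eps}}\cap\partial B_{t\eps}(x_0))=0$ for the chosen radii so that $F$ is additive across the interfaces; this holds for a.e.\ radius and, combined with the continuity statements in Lemma~\ref{lem3}, lets us pass freely between open balls of slightly different radii.
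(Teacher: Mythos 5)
Your overall strategy---blow up at good points, reduce via Lemma~\ref{lem2}, and compare $m(v_{2\eps},\cdot)$ to $m(u,\cdot)$ by gluing near-optimal competitors, paying the transition cost through the growth bound \eqref{e:growthF}---is the right one, and the preparatory material on the choice of $x_0$ is sound. However, the gluing step as you describe it has a genuine gap. You propose a smooth cutoff $\varphi$ on a thin annulus $B_{t\eps}\setminus B_{st\eps}$ with $|\nabla\varphi|\sim ((1-s)t\eps)^{-1}$, which makes $e(z)$ pick up the term $(u-v_{2\eps})\odot\nabla\varphi$; to control the corresponding bulk contribution in \eqref{e:growthF} by $o(\eps)$ one needs $\int_{B_{t\eps}\setminus B_{st\eps}}|u-v_{2\eps}|^p\,dx=o(\eps^{p+1})$. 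But Lemma~\ref{lem:refl} only gives $L^1$-smallness of $v_{2\eps}-u$ (item (iv)) and $L^p$-smallness of $v_{2\eps}-u_{x_0}$ (item (v)), not $L^p$-smallness of $v_{2\eps}-u$; and because $u$ is merely $SBD^p$ there is no $L^p$-Lebesgue-point estimate $\int_{B_\rho}|u-u_{x_0}|^p\,dx=o(\rho^{p+1})$ available at $\Huno$-a.e.\ $x_0\in J_u$. The measure bound $\calL^2(\{u\ne v_{2\eps}\}\cap B_\eps)=o(\eps^2)$ from item (iii) does not rescue this either, since $o(\eps^2)$ is weaker than $o(\eps^{p+1})$ for $p>1$ even if $u\in L^\infty$. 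So the cutoff term on the annulus cannot be estimated.

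The paper avoids this by using a sharp-interface cut rather than a mollified one. Starting from Lemma~\ref{lem:refl}(iii),(iv), a coarea/Fubini argument in the radius produces, for $\calL^1$-a.e.\ $s\in(0,1)$ and along a subsequence, $\frac{1}{\eps_j}\int_{\partial B_{s\eps_j}\cap\{u\ne v_{2\eps_j}\}}(1+|u-v_{2\eps_j}|)\,d\Huno\to 0$ and $\mu(\partial B_{s\eps_j})=\calH^1(\partial B_{s\eps_j}\cap J_{v_{2\eps_j}})=0$. Given a near-optimal competitor $z_j$ for $m(u,B_{s\eps_j}(x_0))$ (which equals $u$ near $\partial B_{s\eps_j}$), the test field is $\zeta_j:=z_j$ on $B_{s\eps_j}$ and $\zeta_j:=v_{2\eps_j}$ on $B_{\eps_j}\setminus\overline{B_{s\eps_j}}$: the only new contribution is a jump on $\partial B_{s\eps_j}$ of size $|u-v_{2\eps_j}|$, which the \emph{surface} part of \eqref{e:growthF} can absorb precisely because the coarea estimate gives $L^1$-type trace control rather than an $L^p$-estimate. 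The opposite inequality is obtained symmetrically, with an additional dilation parameter $\lambda\in(1,2)$ and Lemma~\ref{lem3}(ii) to relate $m(v_{2\eps_j},B_{s\lambda\eps_j})$ to $m(v_{2\eps_j},B_{\eps_j})$. The key idea you are missing is exactly this selection of a good gluing radius by coarea; it is why the surface blow-up must be argued differently from the volume blow-up in Lemmata~\ref{lem:volUB} and~\ref{lem:volLB}, where the $W^{1,p}$-approximations $w^s_\eps$ do come with the $L^p$-estimate \eqref{eqconvweps} that the cutoff argument needs.
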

\begin{proof} 
It suffices to consider points $x_0$ such that
the conclusions of Lemmata~\ref{lem:refl} and \ref{lem2} hold true,
 the Radon-Nikodym derivative $\frac{dF(u,\cdot)}{d\Huno\res J_u}(x_0)$ exists finite,
 \begin{equation}\label{e:cond1}
 \lim_{\eps\to 0}\frac{\mu(B_\eps(x_0))}{2\eps}=1+|[u](x_0)|,\quad
  \end{equation}
  and
 \begin{equation}\label{e:cond2}
 \lim_{\eps\to 0}\Big(\frac{1}{\eps}\int_{B_\eps(x_0)}|e(u)|^pdx+
 \frac{1}{\eps^2}\int_{B_\eps(x_0)}|u(x)-\uabn|dx\Big)=0,
 \end{equation}
where $u_{x_0}$ is the piecewise constant function defined in \eqref{e:uxzero}.
In view of all these choices and thanks to Lemma~\ref{lem2} we may conclude that
 \begin{equation}\label{e:FHuno}
  \frac{dF(u,\cdot)}{d\Huno\res J_u}(x_0)=\lim_{\eps\to 0}\frac{F(u,B_\eps(x_0))}{2\eps} 
  =\lim_{\eps\to 0}\frac{m(u,B_\eps(x_0))}{2\eps}.
 \end{equation}
 
For $\eps>0$ small enough the function 
$v_{2\eps}$ introduced in Lemma~\ref{lem:refl} belongs to 
$SBD^p(B_{4\eps}(x_0))\cap SBV^p(B_{2\eps}(x_0),\R^2)$ 
and it satisfies properties (i)-(vi). We set $w_\eps:=v_{2\eps}$,
we are left with proving that for all $t\in(0,2)$
\begin{equation}\label{e:wepsUB}
\frac{ dF(u,\cdot)}{d\Huno\LL J_u} (x_0) \ge 
\limsup_{\eps\to 0}\frac{m(w_\eps,B_{t\eps}(x_0))}{2t\eps},
\end{equation}
\begin{equation}\label{e:wepsLB}
\frac{ dF(u,\cdot)}{d\Huno\LL J_u} (x_0) \le 
\liminf_{\eps\to 0}\frac{m(w_\eps,B_{t\eps}(x_0))}{2t\eps}.
\end{equation}
For the sake of notational simplicity we shall prove inequalities \eqref{e:wepsUB} 
and \eqref{e:wepsLB} only for $t=1$. 

We start off with \eqref{e:wepsUB}. Let $(\eps_j)_j$ be a sequence such that
\begin{equation}\label{e:epsj}
\lim_{j\to\infty}\frac{m(w_{\eps_j},B_{\eps_j}(x_0))}{2\eps_j}=
\limsup_{\eps\to 0}\frac{m(w_\eps,B_{\eps}(x_0))}{2\eps}.
\end{equation}
Items (iii) and (iv) in Lemma~\ref{lem:refl} and the Coarea formula yield for a subsequence 
not relabeled for convenience that for $\calL^1$-a.e. $s\in(0,1)$ 
\begin{equation}\label{e:coareasurf2}
\lim_j\frac{1}{\eps_j}\int_{\partial B_{s\eps_j}(x_0)\cap\{u\neq w_{\eps_j}\}}\big(1+|u- w_{\eps_j}|\big)d\Huno=0,
\end{equation}
\begin{equation}\label{e:coareasurf1}
\mu\big(\partial B_{s\eps_j}(x_0)\big)=\calH^1\big(\partial B_{s\eps_j}(x_0)\cap J_{ w_{\eps_j}}\big)=0.
\end{equation}
We choose $z_j\in SBD^p(B_{s\eps_j}(x_0))$ such that
 $z_j=u$ around $\partial B_{s\eps_j}(x_0)$ and 
 \[
  F(z_j,B_{s\eps_j}(x_0))\leq m(u,B_{s\eps_j}(x_0))+\eps_j^2,
 \]
and define
\begin{equation*}
\zeta_j:=\begin{cases}
 z_j & B_{s\eps_j}(x_0) \cr
  w_{\eps_j} & B_{\eps_j}(x_0)\setminus \overline{B_{s\eps_j}(x_0)}.
     \end{cases}
 \end{equation*}
The definition of $z_j$, the growth conditions in \eqref{e:growthF}, and the locality of $F$ yield
\begin{multline*}
 m( w_{\eps_j},B_{\eps_j}(x_0))\leq F(\zeta_j,B_{\eps_j}(x_0))\\
 \leq F(z_j,B_{s\eps_j}(x_0))
 +\underbrace{\beta\int_{B_{\eps_j}(x_0)\setminus B_{s\eps_j}(x_0)}(1+|e( w_{\eps_j})|^p)\,dx}_{=:I_j^{(1)}}\\
 +\underbrace{\beta\int_{\partial B_{s\eps_j}(x_0)\cap\{u\neq w_{\eps_j}\}}(1+|u- w_{\eps_j}|)d\Huno}_{=:I_j^{(2)}}
 +\underbrace{\beta\int_{(B_{\eps_j}(x_0)\setminus \overline{B_{s\eps_j}(x_0)})\cap J_{ w_{\eps_j}}}(1+|[ w_{\eps_j}]|)d\Huno}_{=:I_j^{(3)}}\\
 \leq m(u,B_{s\eps_j}(x_0))+\eps_j^2+I_j^{(1)}+I_j^{(2)}+I_j^{(3)}.
 \end{multline*}
We note that $I_j^{(1)}$ and $I_j^{(2)}$ are $o(\eps_j)$ as $j\to \infty$ thanks to Lemma~\ref{lem:refl} (ii) 
and \eqref{e:coareasurf2}, respectively. Instead, employing Lemma~\ref{lem:refl} (vi) and \eqref{e:cond1} to bound $I_j^{(3)}$ we infer that 
\begin{multline}\label{e:Itrej}
 \limsup_{j\to\infty}\frac{I_j^{(3)}}{2\eps_j}\leq \limsup_{j\to\infty}
 \frac\beta{2\eps_j}\int_{(B_{\eps_j}(x_0)\setminus {B_{s\eps_j}(x_0)}\cap J_{u}}(1+|[u]|)d\Huno\\
 =\beta \limsup_{j\to\infty} 
 \frac{\mu\big(B_{\eps_j}(x_0)\setminus {B_{s\eps_j}(x_0)}\cap J_{u}\big)}{2\eps_j}
 =(1-s)\beta(1+|[u](x_0)|).
\end{multline}
Therefore, by \eqref{e:FHuno} we conclude
\begin{multline*}
\lim_{j\to\infty}\frac{m(w_{\eps_j},B_{\eps_j}(x_0))}{2\eps_j}\leq 
 \liminf_{j\to\infty}\frac{m(u,B_{s\eps_j}(x_0))}{2\eps_j}+(1-s)\beta(1+|[u](x_0)|)\\
 =s\frac{dF(u,\cdot)}{d\Huno\LL J_u} (x_0)+(1-s)\beta(1+|[u](x_0)|).
\end{multline*}
Estimate \eqref{e:wepsUB} follows at once by \eqref{e:epsj} and by letting $s\uparrow 1$ in the 
last inequality.

Let now $(\eps_j)_j$ be a sequence such that
\begin{equation}\label{e:epsj2}
\lim_{j\to\infty}\frac{m(w_{\eps_j},B_{\eps_j}(x_0))}{2\eps_j}=
\liminf_{\eps\to 0}\frac{m(w_\eps,B_{\eps}(x_0))}{2\eps}.
\end{equation}
Let $\lambda\in(1,2)$, arguing as for \eqref{e:coareasurf2} and \eqref{e:coareasurf1},  
up to a subsequence depending on $\lambda$ and not relabeled for convenience we may assume that for 
$\calL^1$-a.e. $s\in(0,1)$
\begin{equation}\label{e:coareasurf2b}
\lim_{j\to\infty}\frac{1}{\eps_j}\int_{\partial B_{s\lambda\eps_j}(x_0)\cap\{u\neq w_{\eps_j}\}}
\big(1+|u-w_{\eps_j}|\big)d\Huno=0,
 \end{equation}
 and
 \begin{equation}\label{e:coareasurf1b}
\mu\big(\partial B_{s\lambda\eps_j}(x_0)\big)=
\calH^1\big(\partial B_{s\lambda\eps_j}(x_0)\cap J_{w_{\eps_j}}\big)=0.
 \end{equation}
Given $z_j\in SBD^p(B_{s\lambda\eps_j}(x_0))$ with $z_j=w_{\eps_j}$ around $\partial B_{s\lambda\eps_j}(x_0)$
and such that 
\[
 F(z_j,B_{s\lambda\eps_j}(x_0))\leq m(w_{\eps_j},B_{s\lambda\eps_j}(x_0))+\eps_j^2,
\]
define
\begin{equation*}
\zeta_j:=\begin{cases}
 z_j & B_{s\lambda\eps_j}(x_0) \cr
 u & B_{\lambda\eps_j}(x_0)\setminus \overline{B_{s\lambda\eps_j}(x_0)}.
     \end{cases}
 \end{equation*}
Using $\zeta_j$ as a test field for $m(u,B_{\lambda\eps_j}(x_0))$, by the locality of 
$F$ and its growth conditions in \eqref{e:growthF} 
\begin{multline*}
 m(u,B_{\lambda\eps_j}(x_0))\leq F(\zeta_j,B_{\lambda\eps_j}(x_0))\leq 
 m(w_{\eps_j},B_{s\lambda\eps_j}(x_0))+\eps_j^2\\
 +\underbrace{\beta\int_{B_{\lambda\eps_j}(x_0)}(1+|e(u)|^p)\,dx}_{I_j^{(4)}}
 +\underbrace{\beta\int_{\partial B_{s\lambda\eps_j}(x_0)\cap\{u\neq w_{\eps_j}\}}
 (1+|u-w_{\eps_j}|)d\Huno}_{I_j^{(5)}}\\
 +\underbrace{\beta\int_{(B_{\lambda\eps_j}(x_0)\setminus \overline{B_{s\lambda\eps_j}(x_0)})\cap 
 J_{u}}(1+|[u]|)d\Huno}_{I_j^{(6)}}.
 \end{multline*}
The terms $I_j^{(4)}$ and $I_j^{(5)}$ are $o(\eps_j)$ by \eqref{e:cond2} and
\eqref{e:coareasurf2b}, respectively. 
The term  $I_j^{(6)}$ can be estimated thanks to \eqref{e:cond1}. 
Hence, we get by \eqref{e:FHuno}
\begin{equation}\label{e:basta}
 \frac{dF(u,\cdot)}{d\Huno\res J_u}(x_0)=\limsup_{j\to\infty}\frac{m(u,B_{\lambda\eps_j}(x_0))}{2\lambda\eps_j}
 \leq\limsup_{j\to\infty}\frac{m(w_{\eps_j},B_{s\lambda\eps_j}(x_0))}{2\lambda\eps_j}.
\end{equation}
Next, by choosing $s\in(0,1)$ for which \eqref{e:coareasurf2b} and \eqref{e:coareasurf1b} hold and  
$s\lambda>1$, we may use Lemma~\ref{lem3}(ii) to infer 
\begin{align}
\nonumber
m(w_{\eps_j},B_{s\lambda\eps_j}(x_0))\leq& m(w_{\eps_j},B_{\eps_j}(x_0))
 +
 \beta\int_{B_{s\lambda\eps_j}(x_0)\setminus B_{\eps_j}(x_0)}(1+|e(w_{\eps_j})|^p)dx
 \\  &
 +
 \beta\int_{(B_{s\lambda\eps_j}(x_0)\setminus B_{\eps_j}(x_0))\cap J_{w_{\eps_j}}}(1+|[w_{\eps_j}]|)
 d\Huno
 .\label{e:basta2}
\end{align}
Clearly, the first integral is $o(\eps_j)$ by Lemma~\ref{lem:refl} (ii), while the other one can be dealt with as 
$I_j^{(3)}$ in \eqref{e:Itrej}. Thus, \eqref{e:basta} and \eqref{e:basta2} give
\[
 \frac{dF(u,\cdot)}{d\Huno\res J_u}(x_0)\leq\frac 1\lambda\lim_{j\to\infty}
 \frac{m(w_{\eps_j},B_{\eps_j}(x_0))}{2\eps_j}+(s\lambda-1)\beta(1+|[u](x_0)|).
\]
In conclusion, by taking into account \eqref{e:epsj2}, we deduce \eqref{e:wepsLB} by taking first the 
limit as $s\uparrow 1$, for $s\in(0,1)$ chosen as explained above, and then as $\lambda\downarrow 1$ in the 
latter inequality.
\end{proof}

We are now ready to show that the function $g$ in \eqref{eqdefg} is the surface energy density 
of $F$. This task shall be accomplished by proving two inequalities.

\begin{lemma}\label{lem:surfUB}
 For $\Huno$-a.e. $x_0\in J_u$,
 \begin{equation*}
  \frac{ dF(u,\cdot)}{d\Huno\LL J_u} (x_0) \le g(x_0, u^+(x_0), u^-(x_0),\nu_u(x_0))
 \end{equation*}
where $g$ was defined in \eqref{eqdefg}.
\end{lemma}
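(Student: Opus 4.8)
The plan is to deduce the bound from Lemma~\ref{lem:surftech}, which computes the surface density of $F$ via the reflected functions $v_{2\eps}$, combined with the definition \eqref{eqdefg} of $g$. Fix $x_0\in J_u$ for which the conclusions of Lemmata~\ref{lem:refl}, \ref{lem:surftech} and \ref{lem2} hold and such that $\lim_{\eps\to0}\mu(B_\eps(x_0))/(2\eps)=1+|[u](x_0)|$; write $a:=u^+(x_0)$, $b:=u^-(x_0)$, $\nu:=\nu_u(x_0)$, $\bar u:=u_{x_0,a,b,\nu}$, and let $v_{2\eps}\in SBD^p(B_{4\eps}(x_0))\cap SBV^p(B_{2\eps}(x_0),\Rdue)$ be the functions provided by Lemma~\ref{lem:refl}. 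By Lemma~\ref{lem:surftech} with $t=1$,
\begin{equation*}
 \frac{dF(u,\cdot)}{d\Huno\LL J_u}(x_0)=\lim_{\eps\to0}\frac{m(v_{2\eps},B_\eps(x_0))}{2\eps}.
\end{equation*}
For a fixed $s\in(0,1)$ I would construct, for every small $\eps$, an admissible competitor for $m(v_{2\eps},B_\eps(x_0))$ by gluing a near-optimal competitor for $m(\bar u,B_{s\eps}(x_0))$ in the inner ball to $v_{2\eps}$ near $\partial B_\eps(x_0)$, obtaining $\frac{dF(u,\cdot)}{d\Huno\LL J_u}(x_0)\le s\,g(x_0,a,b,\nu)+c\beta(1-s)(1+|[u](x_0)|)$, and then let $s\uparrow1$.

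Concretely, I would choose $z_\eps\in SBD^p(B_{s\eps}(x_0))$ with $z_\eps=\bar u$ around $\partial B_{s\eps}(x_0)$ and $F(z_\eps,B_{s\eps}(x_0))\le m(\bar u,B_{s\eps}(x_0))+\eps^2$, extend $z_\eps$ by $\bar u$ to $B_\eps(x_0)$ (call the result $\hat z_\eps$, so $\hat z_\eps=\bar u$ on $B_\eps(x_0)\setminus B_{s\eps}(x_0)$), pick $\varphi_\eps\in C^\infty_c(B_\eps(x_0))$ with $\varphi_\eps\equiv1$ on a neighbourhood of $\overline{B_{s\eps}(x_0)}$, $\varphi_\eps\equiv0$ near $\partial B_\eps(x_0)$ and $\|\nabla\varphi_\eps\|_\infty\le c/((1-s)\eps)$, and set $\zeta_\eps:=\varphi_\eps\hat z_\eps+(1-\varphi_\eps)v_{2\eps}\in SBD^p(B_\eps(x_0))$. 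Then $\zeta_\eps=z_\eps$ on $B_{s\eps}(x_0)$; $\zeta_\eps=\bar u$ in a neighbourhood of $\partial B_{s\eps}(x_0)$, so the gluing produces no jump on that sphere and $F(\zeta_\eps,\partial B_{s\eps}(x_0))=0$; and $\zeta_\eps=v_{2\eps}$ near $\partial B_\eps(x_0)$, so $\zeta_\eps$ is admissible for $m(v_{2\eps},B_\eps(x_0))$. Since $F(\zeta_\eps,\cdot)$ is a measure and $F$ is local,
\begin{equation*}
 m(v_{2\eps},B_\eps(x_0))\le m(\bar u,B_{s\eps}(x_0))+\eps^2+F\big(\zeta_\eps,B_\eps(x_0)\setminus\overline{B_{s\eps}(x_0)}\big).
\end{equation*}

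The core of the argument is the bound $F(\zeta_\eps,B_\eps(x_0)\setminus\overline{B_{s\eps}(x_0)})\le c\beta(1-s)\eps(1+|[u](x_0)|)+o(\eps)$, via the upper bound in \eqref{e:growthF}. On the annulus $\hat z_\eps=\bar u$, hence $\zeta_\eps=\varphi_\eps\bar u+(1-\varphi_\eps)v_{2\eps}$ and, since $\nabla\bar u=0$ a.e., $e(\zeta_\eps)=\sym(\nabla\varphi_\eps\otimes(\bar u-v_{2\eps}))+(1-\varphi_\eps)e(v_{2\eps})$; therefore
\begin{equation*}
 \int_{B_\eps(x_0)\setminus B_{s\eps}(x_0)}|e(\zeta_\eps)|^p\,dx\le\frac{c}{((1-s)\eps)^p}\int_{B_{2\eps}(x_0)}|v_{2\eps}-\bar u|^p\,dx+c\int_{B_{2\eps}(x_0)}|\nabla v_{2\eps}|^p\,dx,
\end{equation*}
which is $o(\eps)$ for fixed $s$ by properties (v) and (ii) of Lemma~\ref{lem:refl}, while $\Ln(B_\eps(x_0)\setminus B_{s\eps}(x_0))=o(\eps)$. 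For the jump part, $J_{\zeta_\eps}\cap(B_\eps(x_0)\setminus\overline{B_{s\eps}(x_0)})\subset(J_{\bar u}\cup J_{v_{2\eps}})\cap(B_\eps(x_0)\setminus\overline{B_{s\eps}(x_0)})$ with $[\zeta_\eps]=\varphi_\eps[\bar u]+(1-\varphi_\eps)[v_{2\eps}]$ there: the part of $J_{\bar u}=\{\langle\cdot-x_0,\nu\rangle=0\}$ in the annulus has $\Huno$-measure $2(1-s)\eps$ and $|[\bar u]|=|a-b|$, while the contribution of $J_{v_{2\eps}}$ is controlled, using properties (i) and (vi) of Lemma~\ref{lem:refl} (to pass from $J_{v_{2\eps}}$ to $J_u$ and from $[v_{2\eps}]$ to $[u]$), by $\mu(B_\eps(x_0)\setminus\overline{B_{s\eps}(x_0)})+o(\eps)\le2(1-s)\eps(1+|[u](x_0)|)+o(\eps)$, the last step being the choice of $x_0$. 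Summing gives the claimed bound.

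Dividing the last displayed inequality by $2\eps$, letting $\eps\to0$, and using Lemma~\ref{lem:surftech} together with $\limsup_{\eps\to0}m(\bar u,B_{s\eps}(x_0))/(2\eps)=s\,g(x_0,a,b,\nu)$ (rescale $\eps\mapsto s\eps$ in \eqref{eqdefg}), one obtains $\frac{dF(u,\cdot)}{d\Huno\LL J_u}(x_0)\le s\,g(x_0,a,b,\nu)+c\beta(1-s)(1+|[u](x_0)|)$ for every $s\in(0,1)$, and letting $s\uparrow1$ yields the assertion. The step I expect to be the main obstacle is the one just outlined: in contrast to Lemma~\ref{lem:surftech}, where $v_{2\eps}$ agrees with $u$ off a set of negligible area (Lemma~\ref{lem:refl}(iii)), here $\bar u$ may differ from $v_{2\eps}$ on a set of non-negligible area, so a direct gluing across $\partial B_{s\eps}(x_0)$ would leave an uncontrolled surface term on that sphere; the cut-off layer removes it, and its gradient cost is absorbed precisely because Lemma~\ref{lem:refl}(v) supplies the strong estimate $\int_{B_{2\eps}(x_0)}|v_{2\eps}-u_{x_0}|^p\,dx=o(\eps^{p+1})$, which beats the $((1-s)\eps)^{-p}$ blow-up of $|\nabla\varphi_\eps|^p$.
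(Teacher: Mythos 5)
Your proposal is correct and follows essentially the same route as the paper: reduce via Lemma~\ref{lem:surftech} to comparing $m(v_{2\eps},B_\eps(x_0))$ with $m(u_{x_0},B_{s\eps}(x_0))$, glue a near-optimal competitor for the latter to $v_{2\eps}$ through a cut-off layer in the annulus, bound the annulus contribution by the upper growth estimate using Lemma~\ref{lem:refl}(ii),(v),(i),(vi) and the blow-up of $\mu$, then send $\eps\to0$ and $s\uparrow1$. The only (immaterial) difference is that you take $\varphi_\eps\equiv1$ on a full neighbourhood of $\overline{B_{s\eps}(x_0)}$, which sidesteps the coarea selection of $s$ that the paper invokes to control the measure on $\partial B_{s\eps_j}(x_0)$; your concluding observation that Lemma~\ref{lem:refl}(v) is the precise ingredient beating the $((1-s)\eps)^{-p}$ blow-up of $|\nabla\varphi_\eps|^p$ is exactly the point of the argument.
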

\begin{proof}
We consider the same $x_0$ as in Lemma \ref{lem:surftech}.
In view of (\ref{e:mweps}) and the definition of $g$ in \eqref{eqdefg} it suffices to show that
\begin{equation}\label{e:FHunob}
 \lim_{\eps\to 0}\frac{m(w_\eps,B_{\eps}(x_0))}{2\eps}
 \leq \limsup_{\eps\to 0}\frac{m(u_{x_0},B_{\eps}(x_0))}{2\eps},
 \end{equation}
where $w_\eps$ is the function introduced in Lemma \ref{lem:surftech}.
To prove such a claim consider any sequence $(\eps_j)_j$, we have that for $\calL^1$-a.e. $s\in(0,1)$ 
 \begin{equation}\label{e:coareasurf1c}
  \mu\big(\partial B_{s\eps_j}(x_0)\big)=\calH^1\big(\partial B_{s\eps_j}(x_0)\cap J_{w_j}\big)=0,
 \end{equation}
where we have set $w_j:=w_{\eps_j}$. 
  
Fix $s\in(0,1)$ as above and a test field $z_j\in SBD^p(B_{s\eps_j}(x_0))$ with 
$z_j=\uabn$ on $\partial B_{s\eps_j}(x_0)$ such that
 \[
  F(z_j,B_{s\eps_j}(x_0))\leq m(\uabn,B_{s\eps_j}(x_0))+\eps_j^2.
 \]
Consider a cut-off function $\varphi\in C^\infty_c(B_{\eps_j}(x_0),[0,1])$ such that $\varphi\equiv 1$
on $B_{s\eps_j}(x_0)$ and $\|\nabla \varphi\|_{L^\infty}\leq \frac2{(1-s)\eps_j}$. Define
$\zeta_j:=\varphi\,z_j+(1-\varphi)w_j$, with the convention that $z_j$ is 
extended equal to $\uabn$ outside $B_{s\eps_j}(x_0)$.
Therefore, by using $\zeta_j$ as a test field for $m(w_j,B_{\eps_j}(x_0))$ we infer from 
the growth condition in \eqref{e:growthF} and the locality of $F$ 
\begin{multline}\label{e:stimasurf1}
 m(w_j,B_{\eps_j}(x_0))\leq F(\zeta_j,B_{\eps_j}(x_0))\leq F(z_j,B_{s\eps_j}(x_0))\\
 +\underbrace{C\int_{B_{\eps_j}(x_0)\setminus B_{s\eps_j}(x_0)}
 (1+|e(w_j)|^p)\,dx}_{=:I_j^{(7)}}
 +\underbrace{\frac{C}{((1-s)\eps_j)^p}\int_{B_{\eps_j}(x_0)\setminus B_{s\eps_j}(x_0)}|w_j-\uabn|^pdx}_{=:I_j^{(8)}}\\
 +\underbrace{C\,\Huno\big((B_{\eps_j}(x_0)\setminus B_{s\eps_j}(x_0))\cap J_{\zeta_j}\big)}_{=:I_j^{(9)}}
 +\underbrace{C\int_{(B_{\eps_j}(x_0)\setminus B_{s\eps_j}(x_0))\cap J_{\zeta_j}}|[\zeta_j]|d\Huno}_{=:I_j^{(10)}}\\
 \leq m(\uabn,B_{s\eps_j}(x_0))+\eps_j^2+I_j^{(7)}+I_j^{(8)}+I_j^{(9)}+I_j^{(10)},
 \end{multline}
 with $C=C(\beta,p)>0$.
 
By taking into account Lemma~\ref{lem:refl} (ii) and (v) we deduce that 
$I_j^{(7)}+I_j^{(8)}=o(\eps_j)$ as $j\to \infty$. 
Moreover, as 
\[
\Huno((B_{\eps_j}(x_0)\setminus B_{s\eps_j}(x_0))\cap J_{\zeta_j}\setminus (J_{\uabn}\cup J_{w_j}))=0,
\]
item (i) in Lemma~\ref{lem:refl} together with \eqref{e:cond1} 
give
 \[
  \limsup_{j\to\infty}\frac{I_j^{(9)}}{2\eps_j}\leq C(1-s)(1+|[u](x_0)|).
 \]
Furthermore, for $\Huno$-a.e. $x\in J_{\zeta_j}\cap (B_{\eps_j}(x_0)\setminus \overline{B_{s\eps_j}(x_0)})$ 
it holds
 \begin{equation*}
  |[\zeta_j]|\leq |[\uabn]|\chi_{J_{\uabn}\cap J_{\zeta_j}}+|[w_j]|\chi_{J_{w_j}\cap J_{\zeta_j}}\leq
  2|[\uabn]|\chi_{J_{\zeta_j}}+|[w_j]-[\uabn]|\chi_{J_{w_j}}.
 \end{equation*}
 In turn the latter inequality implies by \eqref{e:cond1} 
 and \eqref{e:coareasurf1c} 
 \begin{multline*}
  \limsup_{j\to\infty}\frac{I_j^{(10)}}{2\eps_j}\leq C(1-s)|[u](x_0)|\\
  +C\,\limsup_{j\to\infty}\frac{1}{2\eps_j}
  \int_{(B_{\eps_j}(x_0)\setminus B_{s\eps_j}(x_0))\cap J_{\zeta_j}}
  (|{[w_j]-[u](x_0)}|)\,d\Huno\\
  \leq C(1-s)|[u](x_0)|,
 \end{multline*}
thanks to item (vi) in Lemma~\ref{lem:refl}.

Finally, we obtain from \eqref{e:stimasurf1}
\begin{multline*}
\liminf_{j\to\infty}\frac{m(w_j,B_{\eps_j}(x_0))}{2\eps_j}\leq
 s\limsup_{j\to\infty}\frac{m(\uabn,B_{s\eps_j}(x_0))}{2s\eps_j}+C(1-s)(1+|[u](x_0)|)\\
 \leq s\limsup_{\eps\to0}\frac{m(\uabn,B_{\eps}(x_0))}{2\eps}+C(1-s)(1+|[u](x_0)|),
\end{multline*}
 and the claim in \eqref{e:FHunob} follows at once by letting $s\to 1$ in the inequality above.
 \end{proof}
The reverse inequality is established arguing in an analogous fashion, therefore we provide 
a more concise proof. 
\begin{lemma}\label{lem:surfLB}
 For $\Huno$-a.e. $x_0\in J_u$,
 \begin{equation*}
  \frac{ dF(u,\cdot)}{d\Huno\LL J_u} (x_0) \ge g(x_0, u^+(x_0), u^-(x_0),\nu_u(x_0))
 \end{equation*}
where $g$ was defined in \eqref{eqdefg}.
\end{lemma}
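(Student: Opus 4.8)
The plan is to mirror the proof of Lemma~\ref{lem:surfUB}, interchanging the roles of the piecewise constant competitor $\uabn$ and of the reflected function $w_\eps:=v_{2\eps}$ supplied by Lemma~\ref{lem:refl}. I work with the same point $x_0$ as in Lemma~\ref{lem:surftech}, so that \eqref{e:mweps}, \eqref{e:cond1}, \eqref{e:FHuno} and properties (i)--(vi) of $w_\eps$ are available; in particular Lemma~\ref{lem:surftech} applied with radius $t=s$ gives $\frac{dF(u,\cdot)}{d\Huno\LL J_u}(x_0)=\lim_{\eps\to0}\frac{m(w_\eps,B_{s\eps}(x_0))}{2s\eps}$ for every $s\in(0,1)$. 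Since $\uabn=u_{x_0,u^+(x_0),u^-(x_0),\nu_u(x_0)}$, the definition \eqref{eqdefg} shows that the assertion is equivalent to $\limsup_{\eps\to0}\frac{m(\uabn,B_\eps(x_0))}{2\eps}\le\frac{dF(u,\cdot)}{d\Huno\LL J_u}(x_0)$, and, by the identity just recalled, this will follow once I prove that for $\calL^1$-a.e.\ $s\in(0,1)$
\begin{equation*}
\limsup_{\eps\to0}\frac{m(\uabn,B_\eps(x_0))}{2\eps}\le s\,\frac{dF(u,\cdot)}{d\Huno\LL J_u}(x_0)+C(1-s)\big(1+|[u](x_0)|\big)
\end{equation*}
for a constant $C=C(\beta)$, and then let $s\uparrow1$ along such radii.

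To prove the displayed inequality I would fix such an $s$, choose a sequence $\eps_j\to0$ realizing the $\limsup$ on the left and, as in \eqref{e:coareasurf1c}, such that $\mu(\partial B_{s\eps_j}(x_0))=\Huno(\partial B_{s\eps_j}(x_0)\cap J_{w_{\eps_j}})=0$ for all $j$. For each $j$ pick a near optimal competitor $z_j\in SBD^p(B_{s\eps_j}(x_0))$ for $m(w_{\eps_j},B_{s\eps_j}(x_0))$, i.e.\ $z_j=w_{\eps_j}$ around $\partial B_{s\eps_j}(x_0)$ and $F(z_j,B_{s\eps_j}(x_0))\le m(w_{\eps_j},B_{s\eps_j}(x_0))+\eps_j^2$; extend $z_j$ by $w_{\eps_j}$ outside $B_{s\eps_j}(x_0)$, take $\varphi\in C^\infty_c(B_{\eps_j}(x_0),[0,1])$ with $\varphi\equiv1$ on $\overline{B_{s\eps_j}(x_0)}$ and $\|\nabla\varphi\|_\infty\le2/((1-s)\eps_j)$, and set $\zeta_j:=\varphi\,z_j+(1-\varphi)\,\uabn$. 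Then $\zeta_j\in SBD^p(B_{\eps_j}(x_0))$, $\zeta_j=\uabn$ around $\partial B_{\eps_j}(x_0)$ and $\Huno(J_{\zeta_j}\cap\partial B_{s\eps_j}(x_0))=0$, so that $\zeta_j$ is admissible for $m(\uabn,B_{\eps_j}(x_0))$ and, by locality and the measure property of $F$,
\begin{equation*}
m(\uabn,B_{\eps_j}(x_0))\le F(\zeta_j,B_{\eps_j}(x_0))=F(z_j,B_{s\eps_j}(x_0))+F(\zeta_j,A_j)\le m(w_{\eps_j},B_{s\eps_j}(x_0))+\eps_j^2+E_j,
\end{equation*}
where $A_j:=B_{\eps_j}(x_0)\setminus\overline{B_{s\eps_j}(x_0)}$ and $E_j:=F(\zeta_j,A_j)$ is estimated through \eqref{e:growthF}.

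The heart of the matter is the bound on $E_j$. On $A_j$ one has $\zeta_j=\varphi\,w_{\eps_j}+(1-\varphi)\,\uabn$ and, since $\nabla\uabn=0$ $\Ln$-a.e., $e(\zeta_j)=\varphi\,e(w_{\eps_j})+\sym\big((w_{\eps_j}-\uabn)\otimes\nabla\varphi\big)$; hence the volume part of $E_j$ is at most $C\Ln(B_{\eps_j})+C\int_{B_{\eps_j}(x_0)}|e(w_{\eps_j})|^p\,dx+C((1-s)\eps_j)^{-p}\int_{B_{\eps_j}(x_0)}|w_{\eps_j}-\uabn|^p\,dx$, which for fixed $s$ is $o(\eps_j)$ by Lemma~\ref{lem:refl}(ii) and (v). For the surface part, $J_{\zeta_j}\cap A_j\subseteq J_{w_{\eps_j}}\cup J_{\uabn}$ and $[\zeta_j]=\varphi[w_{\eps_j}]+(1-\varphi)[\uabn]$ there, whence $|[\zeta_j]|\,\chi_{J_{\zeta_j}}\le|[w_{\eps_j}]|\,\chi_{J_{w_{\eps_j}}}+|[u](x_0)|\,\chi_{J_{\uabn}}$; the contribution along the line $J_{\uabn}$ is $\beta(1+|[u](x_0)|)\Huno(J_{\uabn}\cap A_j)=2\beta(1+|[u](x_0)|)(1-s)\eps_j$, and the contribution along $J_{w_{\eps_j}}\subseteq\Gamma$ is handled exactly as the term $I_j^{(3)}$ in \eqref{e:Itrej}, by means of Lemma~\ref{lem:refl}(i),(vi) and the density identity \eqref{e:cond1}, to the effect that $\limsup_j(2\eps_j)^{-1}\beta\int_{A_j\cap J_{w_{\eps_j}}}(1+|[w_{\eps_j}]|)\,d\Huno\le\beta(1-s)(1+|[u](x_0)|)$. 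Altogether $\limsup_j E_j/(2\eps_j)\le C(1-s)(1+|[u](x_0)|)$.

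Finally, dividing the last displayed inequality by $2\eps_j$, writing $m(w_{\eps_j},B_{s\eps_j}(x_0))/(2\eps_j)=s\cdot m(w_{\eps_j},B_{s\eps_j}(x_0))/(2s\eps_j)$ and letting $j\to\infty$ --- the left-hand side tending to $g(x_0,u^+(x_0),u^-(x_0),\nu_u(x_0))$ because $\eps_j$ realizes the $\limsup$, while $m(w_{\eps_j},B_{s\eps_j}(x_0))/(2s\eps_j)\to\frac{dF(u,\cdot)}{d\Huno\LL J_u}(x_0)$ by Lemma~\ref{lem:surftech} --- one obtains the $s$-dependent estimate displayed above, and $s\uparrow1$ along such $s$ gives the lemma. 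I expect the only genuinely delicate point to be, exactly as in Lemma~\ref{lem:surfUB}, the surface bookkeeping on $A_j$: one must first replace $[w_{\eps_j}]$ by $[u]$ using Lemma~\ref{lem:refl}(vi), then replace $[u]$ on $J_u$ near $x_0$ by the constant $[u](x_0)$ using \eqref{e:cond1}, while discarding $J_{w_{\eps_j}}\setminus J_u$ by Lemma~\ref{lem:refl}(i); since this is precisely the computation already carried out for $I_j^{(3)}$, the proof can be kept concise.
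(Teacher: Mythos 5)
Your proposal is correct and follows essentially the same route as the paper: take a sequence realizing the $\limsup$ defining $g$, build $\zeta_j=\varphi z_j+(1-\varphi)u_{x_0}$ from a near-minimizer $z_j$ for $m(w_{\eps_j},B_{s\eps_j}(x_0))$, bound the excess on the annulus via the growth conditions and Lemma~\ref{lem:refl} (ii), (v), (i), (vi) together with \eqref{e:cond1}, and conclude via \eqref{e:mweps} letting $s\uparrow1$. The only difference is presentational: where the paper writes ``arguing as in the corresponding estimate in Lemma~\ref{lem:surfUB}'', you spell out the split of the surface term along $J_{u_{x_0}}$ and along $J_{w_{\eps_j}}$, which is exactly what that reference encodes.
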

\begin{proof}
We consider the same points $x_0$ as in Lemma \ref{lem:surftech}.
Take any infinitesimal sequence $(\eps_j)_j$ such that
  \[
   g(x_0, u^+(x_0), u^-(x_0),\nu_u(x_0))=\lim_{j\to\infty}\frac{m(\uabn,B_{\eps_j}(x_0))}{2\eps_j},
  \]
 and recall that \eqref{e:coareasurf1c} is valid for $\calL^1$-a.e. $s\in(0,1)$ (as usual $w_j=w_{\eps_j}$). 
 Having fixed such an $s$, let $z_j\in SBD^p(B_{s\,\eps_j}(x_0))$ with
 $z_j=w_j$ on $\partial B_{s\eps_j}(x_0)$ be such that
\[
 F(z_j,B_{s\eps_j}(x_0))\leq m(w_j,B_{s\eps_j}(x_0))+\eps_j^2.
\]
Let $\varphi\in C^\infty_c(B_{\eps_j}(x_0),[0,1])$ be a cut-off function such that $\varphi\equiv 1$
on $B_{s\eps_j}(x_0)$ and $\|\nabla \varphi\|_{L^\infty}\leq \frac2{(1-s)\eps_j}$. Define
$ \zeta_j:=\varphi\,z_j+(1-\varphi)\uabn$,
with the convention that $z_j$ is extended equal to $w_j$ outside $B_{s\eps_j}(x_0)$.
By using $\zeta_j$ as a test field for $m(\uabn,B_{\eps_j}(x_0))$ we infer from the growth 
condition in \eqref{e:growthF} and the locality of $F$
\begin{multline*}
 m(\uabn,B_{\eps_j}(x_0))\leq F(\zeta_j,B_{\eps_j}(x_0))\leq m(w_j,B_{s\eps_j}(x_0))+\eps_j^2\\
 + C\int_{B_{\eps_j}(x_0)\setminus B_{s\,\eps_j}(x_0)}(1+|e(w_j)|^p)\,dx
 + \frac{C}{((1-s)\eps_j)^p}\int_{B_{\eps_j}(x_0)\setminus B_{s\eps_j}(x_0)}|w_j-\uabn|^pdx\\
+ C\int_{(B_{\eps_j}(x_0)\setminus B_{s\eps_j}(x_0))\cap J_{\zeta_j}}(1+|[\zeta_j]|)d\Huno,
 \end{multline*}
 where $C=C(\beta,p)>0$.
Arguing as in the corresponding estimate in Lemma~\ref{lem:surfUB} (cf. \eqref{e:stimasurf1}), and 
by taking into account the choice of $(\eps_j)_j$ we conclude that
\begin{multline*}
 g(x_0, u^+(x_0), u^-(x_0),\nu_u(x_0))\leq\liminf_{j\to\infty}\frac{m(w_j,B_{s\eps_j}(x_0))}{2\eps_j}
 +C(1-s)(1+|[u](x_0)|)\\
 =s\frac{ dF(u,\cdot)}{d\Huno\LL J_u} (x_0)+C(1-s)(1+|[u](x_0)|).
\end{multline*}
The last equality follows from \eqref{e:mweps}.
The conclusion is achieved by letting $s\uparrow 1$ in the last inequality, with $s\in(0,1)$ 
satisfying \eqref{e:coareasurf1c}.
\end{proof}

\subsection{Proof of Theorem \ref{theorepr}}
\begin{proof}[Proof of Theorem \ref{theorepr}]
The conclusion straightforwardly follows by Lemmata \ref{lem:volUB}, \ref{lem:volLB}, \ref{lem:surfUB}, and \ref{lem:surfLB}. 
\end{proof}
\begin{proposition}\label{prop:growth}
The assertion in Theorem~\ref{theorepr} holds also if property (iv) is replaced by the weaker
\begin{enumerate}
 \item[(iv')] There are $\alpha,\beta>0$ such that for any $u\in SBD^p(\Omega)$, any $B\in \calB(\Omega)$,
  \begin{align*}
   &\alpha \Bigl(\int_B |e(u)|^pdx + \Huno(J_u\cap B)\Bigr) \le
    F(u,B)\notag\\
    \le &
  \beta \Bigl(\int_B (|e(u)|^p+1)dx + \int_{J_u\cap B} (1+|[u]|) d\Huno\Bigr).
   \end{align*}
   \end{enumerate}
   \end{proposition}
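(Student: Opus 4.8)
I would deduce Proposition~\ref{prop:growth} from Theorem~\ref{theorepr} by replacing $F$ with a functional that enjoys the full coercivity~(iv), the difference being a surface term that is stripped off at the end. For $u\in SBD^p(\Omega)$ write $E^ju:=[u]\odot\nu_u\,\Huno\LL J_u$ for the jump part of $Eu$, so that $|E^ju|(B)=\int_{J_u\cap B}|[u]\odot\nu_u|\,d\Huno$ is a finite positive measure with $\tfrac1{\sqrt2}|[u]|\le|[u]\odot\nu_u|\le|[u]|$, and set $\bar F(u,B):=F(u,B)+\alpha\,|E^ju|(B)$. Then $\bar F(u,\cdot)$ is a Borel measure and $\bar F(\cdot,A)$ is local, so (i) and (iii) hold for $\bar F$; and the two-sided estimate between $|[u]|$ and $|[u]\odot\nu_u|$ together with (iv$'$) shows at once that $\bar F$ satisfies (iv) for suitable $\bar\alpha,\bar\beta>0$.

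The one point that uses (iv$'$) rather than the weaker-looking (iv) is the lower semicontinuity~(ii) for $\bar F$. Let $u_j\to u$ in $L^1(\Omega,\Rdue)$ with $u_j,u\in SBD^p(\Omega)$ and let $A\subset\Omega$ be open; I may assume $\liminf_j\bar F(u_j,A)<\infty$ and, passing to a subsequence, that it is a limit. By (iv$'$) the sequence is then bounded in $SBD^p(A)$ with $\sup_j|E^ju_j|(A)<\infty$; hence, by the $SBD$ closure theorem (\cite{BellettiniCosciaDalmaso1998}; see also \cite[Theorem~11.3]{gbd}) and $u_j\to u$ in $L^1$, one has $e(u_j)\rightharpoonup e(u)$ weakly in $L^p(A)$ and $Eu_j\rightharpoonup Eu$ weakly$^*$ as measures, so $E^ju_j=(Eu_j-e(u_j)\Ln)\LL A\rightharpoonup E^ju\LL A$ weakly$^*$ and therefore $|E^ju|(A)\le\liminf_j|E^ju_j|(A)$. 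Together with (ii) for $F$ and the elementary inequality $\liminf_j(a_j+b_j)\ge\liminf_j a_j+\liminf_j b_j$ this gives $\bar F(u,A)\le\liminf_j\bar F(u_j,A)$.

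Theorem~\ref{theorepr} now applies to $\bar F$ and yields Borel functions $\bar f\ge0$ and $\bar g\ge0$ with $\bar F(u,B)=\int_B\bar f(x,u,\nabla u)\,dx+\int_{B\cap J_u}\bar g(x,u^-,u^+,\nu_u)\,d\Huno$; moreover $\bar g$ is given by \eqref{eqdefg} with $F$ replaced by $\bar F$, i.e. with $m$ replaced by $\bar m(u,B):=\inf\{\bar F(w,B):w\in SBD^p(B),\ w=u\text{ around }\partial B\}$. Since $\bar F(u,B)-F(u,B)=\alpha\int_{B\cap J_u}|(u^+-u^-)\odot\nu_u|\,d\Huno$ is a pure surface term, setting $f:=\bar f$ and $g(x,a,b,\nu):=\bar g(x,a,b,\nu)-\alpha\,|(a-b)\odot\nu|$ produces the representation \eqref{e:fg} for $F$ with $f$ Borel nonnegative and $g$ Borel, so everything reduces to the pointwise lower bound
\[
 \bar g(x_0,a,b,\nu)\ \ge\ \alpha\,|(a-b)\odot\nu|\qquad\text{for all }x_0\in\Omega,\ a,b\in\R^2,\ \nu\in S^1,
\]
which I expect to be the main technical step.

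To prove it (the case $a=b$ being trivial) fix $x_0,a,b,\nu$, put $M:=|(a-b)\odot\nu|$, and take any competitor $w\in SBD^p(B_\eps(x_0))$, i.e. $w=u_{x_0,a,b,\nu}$ around $\partial B_\eps(x_0)$. Extending $w$ by $u_{x_0,a,b,\nu}$ and applying the Gauss--Green formula for $BD$ on a ball $B_\eps(x_0)\subset\subset B_R(x_0)$ gives $\int_{B_\eps(x_0)}dEw=\int_{B_\eps(x_0)}dEu_{x_0,a,b,\nu}=2\eps\,(a-b)\odot\nu$, hence $\int_{B_\eps(x_0)}|e(w)|\,dx+|E^jw|(B_\eps(x_0))=|Ew|(B_\eps(x_0))\ge2\eps M$. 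Fixing $\vartheta\in(0,1)$: if $|E^jw|(B_\eps(x_0))\ge2(1-\vartheta)\eps M$ then $\bar F(w,B_\eps(x_0))\ge\alpha\,|E^jw|(B_\eps(x_0))\ge2\alpha(1-\vartheta)\eps M$; otherwise $\int_{B_\eps(x_0)}|e(w)|\,dx\ge2\vartheta\eps M$ and H\"older's inequality gives $\bar F(w,B_\eps(x_0))\ge\alpha\int_{B_\eps(x_0)}|e(w)|^p\,dx\ge c(\alpha,\vartheta,p)\,M^p\,\eps^{2-p}$, which exceeds $2\alpha(1-\vartheta)\eps M$ for all $\eps$ below a threshold depending only on $a,b,\nu,\vartheta$ — this is where $p>1$ is used. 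In either case $\bar m(u_{x_0,a,b,\nu},B_\eps(x_0))\ge2\alpha(1-\vartheta)\eps M$ for small $\eps$, so $\bar g(x_0,a,b,\nu)=\limsup_{\eps\to0}\bar m(u_{x_0,a,b,\nu},B_\eps(x_0))/(2\eps)\ge\alpha(1-\vartheta)M$, and letting $\vartheta\to0$ concludes.
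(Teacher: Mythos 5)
Your proof is correct, but it takes a genuinely different route from the paper's. The paper perturbs $F$ by a whole one-parameter family $F_\delta(u,B):=F(u,B)+\delta\int_{J_u\cap B}|[u]|\,d\Huno$, applies Theorem~\ref{theorepr} to each $F_\delta$, observes that the resulting surface densities $g_\delta$ are monotone in $\delta$ and nonnegative, and passes to the pointwise limit $g:=\lim_{\delta\to0}g_\delta$; there is no need to prove any lower bound on a cell density, since nonnegativity is inherited from each $g_\delta\geq 0$ in the limit. You instead add a single term $\alpha|E^ju|(B)$ once, obtain a single pair $(\bar f,\bar g)$, subtract the exact surface contribution, and then must separately show $\bar g(x_0,a,b,\nu)\geq\alpha|(a-b)\odot\nu|$; you do so by a neat Gauss--Green plus H\"older argument which crucially exploits $p>1$, the same place it enters in the rest of the paper. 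What each approach buys: your choice of $|E^ju|(B)$ rather than $\int_{J_u\cap B}|[u]|\,d\Huno$ makes the lower semicontinuity of the perturbed functional essentially automatic (it is the total variation of the weak-$*$ convergent jump part of $Eu_j$), whereas the paper's perturbation, while giving a shorter argument and dispensing with the pointwise lower bound, relies on lower semicontinuity of $u\mapsto\int_{J_u\cap A}|[u]|\,d\Huno$, which is not spelled out and is a slightly more delicate fact. So your argument is somewhat longer, but the lower semicontinuity step is cleaner, and the explicit lower bound on $\bar g$ is a pleasant, elementary computation. Two small remarks: the sentence ``uses (iv$'$) rather than the weaker-looking (iv)'' is backwards, since (iv$'$) is the weaker hypothesis; and, as you implicitly use, the coercivity in (iv$'$) together with the added term gives the bound on $|Eu_j|(A)$ needed to invoke $BD$/$SBD$ compactness, which is worth stating.
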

   \begin{proof}
Given $F$ satisfying properties (i)-(iii) and (iv'), we define for $\delta>0$ a functional $F_\delta:SBD^p(\Omega)\times \calB(\Omega)\to[0,\infty)$ by
$$F_\delta(u,B):=F(u,B)+\delta\int_{J_u\cap B}|[u]|d\Huno,$$
for $u\in SBD^p(\Omega)$ and $B\in\calB(\Omega)$. Since $F_\delta$ satisfies properties (i)-(iv), there are two functions $f$ and $g_\delta$
such that $F_\delta$ can be represented as in \eqref{e:fg}. 
The family of functionals $F_\delta$ is pointwise increasing in $\delta$, therefore there exists the pointwise limit $g$
of $g_\delta$ as $\delta\to 0$. We conclude that the representation \eqref{e:fg} holds for $F$ with densities $f$ and $g$.
\end{proof}
\begin{remark}
 Since $F$ is lower semicontinuous on $W^{1,p}$, the integrand $f$ is quasiconvex \cite{AcerbiFusco84,Marcellini1985}. Since $F$ is lower semicontinuous on
 piecewise constant functions, $g$ is $BV$-elliptic \cite{AmbrosioBraides1990a,AmbrosioBraides1990b}.
\end{remark}

\begin{remark}
If the functional $F$ additionally obeys
\[
F(u+I,B)=F(I,B),
\]
for every $u\in SBD^p(\Omega)$, every ball $B\subset\Omega$, and every affine function $I$ such that $e(I)=0$, then 
there are two functions $f:\Omega\times \R^{2\times 2}\to [0,\infty)$ and
$g:\Omega\times\R^2\times S^1\to [0,\infty)$ such that
\[
 F(u,B)=\int_B f(x, e(u(x))) dx + \int_{B\cap J_u} g(x,[u](x),\nu_u(x)) d\Huno\,.
\] 
\end{remark}

\begin{remark}\label{r:Fgradiente}
A growth condition on the volume part of the type of \eqref{e:growthF} alone does 
not force the energy density to depend only on $e(u)$. 
As an example, the integrand $f:\R^{2\times 2}\to [0,\infty)$ defined by
\[
f(\xi):={(\xi_{11}+\xi_{22})^2}+\sqrt{(\xi_{12}^2+\xi_{21}^2)^2+1}-2\det(\xi)
\]
satisfies 
\[
{\frac18}|\xi+\xi^T|^2\leq f(\xi)\leq \frac14|\xi+\xi^T|^2+1
\]
for every $\xi\in\R^{2\times 2}$, but evidently $f(\xi)$ depends also on the
skew-symmetric part $\xi-\xi^T$. At the same time, $f$ is quasiconvex.
We do not know if there is $g$ such that 
the functional $F$ defined as in (\ref{e:fg}) 
satisfies the growth condition \eqref{e:growthF} and is lower semicontinuous.
\end{remark}

\section*{Acknowledgments} 
F.~Iurlano wishes to thank Gianni Dal Maso for an interesting discussion.
This work was partially supported 
by the Deutsche Forschungsgemeinschaft through the Sonderforschungsbereich 1060 
{\sl ``The mathematics of emergent effects''}, project A6.
S.~Conti thanks the University of Florence for
the warm hospitality of the DiMaI ``Ulisse Dini'', where part of this work was 
carried out. 
M.~Focardi and F.~Iurlano are members of the Gruppo Nazionale per
l'Analisi Matematica, la Probabilit\`a e le loro Applicazioni (GNAMPA)
of the Istituto Nazionale di Alta Matematica (INdAM).


\end{document}